\renewcommand{\L}{\mathcal{L}}
\newcommand{\dom}{\mathrm{dom}}
\newcommand{\R}{\mathbb R}
\newcommand{\RP}{\mathbb{RP}}
\newcommand{\Pp}{\mathcal P}
\renewcommand{\P}{\mathcal P}
\newcommand{\C}{\mathcal C}
\newcommand{\Co}{\mathbb C}
\newcommand{\VP}{\mathcal P}
\newcommand{\Z}{\mathbb Z}
\newcommand{\E}{\mathbb E}
\newtheorem{thm}{Theorem}
\newtheorem{deff}{Definition}
\newtheorem{proposition}{Proposition}
\newtheorem{lemma}{Lemma}
\newtheorem{conj}{Conjecture}
\newtheorem{corollary}{Corollary}
\theoremstyle{remark} 
\newtheorem{remark}{\bf Remark}
\DeclareMathOperator{\supp}{supp}
\DeclareMathOperator{\vol}{vol}
\DeclareMathOperator{\inte}{int}
\DeclareMathOperator{\conv}{\rm conv}
\DeclareMathOperator{\pos}{\rm pos}
\newcommand{\eps}{\varepsilon}
\begin{document}

\title{ Volume Product}
\author{Matthieu Fradelizi}
\author{Mathieu Meyer}
 \author{Artem Zvavitch}\thanks{A.Z. is supported in part by the U.S. National Science
Foundation Grant DMS-1101636,  CNRS and U.S. National Science Foundation under Grant No. DMS-1929284 while A.Z. was in residence at the Institute for Computational and Experimental Research in Mathematics in Providence, RI, during the Harmonic Analysis and Convexity semester program.}

 \keywords{convex bodies, polar bodies, volume product, Mahler's conjecture, Blaschke-Santal\"o inequality, Equipartitions.}
\subjclass[2010]{52A20, 52A40,  53A15, 52B10.}

\date{January, 2023}

\begin{abstract}

 Our purpose here is to give an  overview of known results and open questions concerning the volume product $\VP(K)=\min_{z\in K}\vol(K)\vol((K-z)^*)$ of  a convex body $K$ in $\R^n$. We present a number of  upper and   lower bounds for $\VP(K)$, in particular, we discuss   the Mahler's conjecture on the lower bound of $\VP(K)$, which is still open. We also show connections of $\VP(K)$ with different parts of modern mathematics, including Geometric Number Theory, Convex Geometry, Analysis, Harmonic Analysis as well as Systolic and Symplectic Geometries  and Probability. 
\end{abstract}

\maketitle

\tableofcontents

\section{Introduction}

More or less attached to the name of Kurt Mahler (1903-1988), there are at least two celebrated unsolved problems:
 \begin{itemize}
\item  Lehmer's problem on algebraic numbers
\item  The  lower bound for  the volume product of convex bodies.
\end{itemize}
The celebrity of these two problems comes from the fact that they are both very natural and easy to state, but still unsolved and that for almost one century, a lot of mathematicians gave partial results, equivalent statements or many generalizations. There are still a lot of interesting attempts to resolve those problems which appear every now and then and produce connections of those questions to many areas of modern mathematics.

Although we shall be interested here in the second one, for the curiosity of the reader we summarize the first one. Let $\alpha$ be an algebraic integer and $P$ be the minimal polynomial of $\alpha$, that is the polynomial with integer coefficients with the smallest degree $d$ such that the coefficient of $x^d$ is $1$ and $\VP(\alpha)=0$. Let $\alpha_1=\alpha$ and $\alpha_2,\dots, \alpha_d\in \Co$ be the other roots of $P$. The {\it Mahler measure} of $\alpha$ is the number $M(\alpha):= \prod_{k=1}^d \max(|\alpha_k |,1)$.  By a classical result of Kronecker,  if 
$M(\alpha)=1$, then $\alpha$ is a root of unity. But how near can $\alpha$ be from $1$ when it is not a root of unity? Is there a constant $c>1$, independent of the degree of $\alpha$, such that $M(\alpha)>1$
implies that  $M(\alpha)>c$? Derrick Henry Lehmer conjectured  in 1933 \cite{Leh} that the answer to this question is positive (we note that for $c$ depending   on the degree of $\alpha$ a lot of estimates were given) and Mahler contributed to it, at least, by defining the measure to which his name was given \cite{Sm, VG}.

We shall be mainly concerned here with a second problem: Let $K$ be a convex  symmetric body in $\R^n$, which is the unit ball of a $n$-dimensional normed space $E$. Let $K^*$ be the polar body of $K$, which is the unit ball of $E^*$, the dual of $E$. What are the bounds for the {\it volume product} $\VP(E)= \VP(K):=\vol(K)\vol(K^*)$? It appears that the best upper bounds are known for a long time (Blaschke 1917 for $n=2,3$, \cite{Bl1}, Santal\'o 1949, $n\ge 4$ \cite{San2}), but to find the exact lower bounds is still an open conjecture, although the asymptotic behavior of $\min\{\VP(E); E \mbox{ $n$-dimensional normed space}\}$  was discovered almost 40 year ago by Bourgain and V. Milman \cite{BM}.  This problem has a lot of generalizations and specializations. One can ask a series of very natural questions including: What happens if $K$ is no longer centrally symmetric?  What happens for special classes of convex bodies? Is there a functional version of the volume product? What are the possible applications and connections inside and outside  convex geometry? We must also note that  a lot of new methods were used, in particular from functional analysis, harmonic analysis, topology, differential geometry and probability, to prove properties of the volume product and to attack different cases of this question.  We shall try here to explain just some of them and summarize  the others.

The paper is structured in the following way. We introduce the volume product and prove its basic properties in Section \ref{sec1980}. In Section \ref{secshadow}, we describe the methods of shadow systems which turn out to be essential in the study of the bounds for volume product. In Section \ref{BS}, we discuss the upper bound for the volume product - the Blaschke-Santal\'o inequality. We present different proofs, including a proof via Steiner symmetrizations and a harmonic analysis approach; we also discuss a number of extensions of this inequality. In Section \ref{MC}, we discuss the conjectured lower bound - the Mahler conjecture. We present a solution in a number of partial cases, including the case of zonoids, of unconditional bodies and of dimension $2$ and a very recent solution for  symmetric $3$-dimensional bodies.  We also present here an approach to stability results to both upper and lower bounds. Section \ref{AE} is dedicated to the asymptotic lower estimates for the volume product, in particular to the Bourgain-Milman inequality. Here, we extend our presentation to the harmonic analytic and complex analytic approach to the volume product. Section~\ref{func} is dedicated to the  functional inequalities related to the volume product with a special connection to transport inequalities. In section \ref{secMB}, we discuss the generalization of the volume product to the case of many functions and bodies. Finally, in section \ref{linsec}, we present a sample of connections of the bounds for volume product to other inequalities, including the slicing conjecture, Viterbo's conjecture, applications to the geometry of numbers and isosystolic inequalities. 

We refer the reader to \cite{AGM1, AGM2, Ga, Ga2, Gru, Kol, Pi, Sc, Tom} for many additional information on convex bodies, volume and mixed volume, duality and other core objects in analysis, geometry and convexity used in this survey. 

\vspace{.1in}
\noindent
{\bf Acknowledgments.}
We are grateful to Richard Gardner,  Dmitry Faifman, Dylan Langharst, Erwin Lutwak and Shlomo Reisner
 for many corrections,   valuable discussions and suggestions.

\subsection{Notations and results before 1980} \label{sec1980}

A convex body $K$ in $\R^n$ is a convex compact subset of $\R^n$ with nonempty interior denoted $\inte(K)$. We say that  $L\subset \R^n$ is centrally symmetric if $L=-L$.  Let $K$ be a convex body in $\R^n$ such that $0\in \inte(K)$; for $x\in \R^n$, we define $$\|x\|_K=\inf\{t>0; \,\, tx\in K\}$$
to be the {\it gauge} of $K$; in particular, when $K$ is a convex symmetric body, $x\mapsto \|x\|_K$ is the norm on $\R^n$ for which $K$ is the closed  unit ball. We endow $\R^n$ with its natural scalar product, denoted $\langle\ , \ \rangle,$ the associated Euclidean norm denoted $|\cdot |$; the Euclidean ball of radius one is denoted $B_2^n$. The canonical Lebesgue measure of a Borel set $A\subset \R^n$ is denoted by $\vol(A)$.  

Let $K$ be a convex body. If $0\in \inte(K)$, the {\it polar body} $K^*$ is defined by 
\begin{equation}\label{dualz}
K^*=\{y\in \R^n; \langle x,y\rangle \le 1 \hbox{ for all } x\in K\}.
\end{equation}
Then $K^*$ is also a convex body such that $0\in \inte(K^*)$  and if $T:\R^n\to \R^n$ is a linear isomorphism, one has $$\big(T(K)\big)^*= (T^*)^{-1}(K^*),$$ where $T^*$ is the adjoint of $T$.  More generally, for a convex body $K$ and  $z\in \inte(K)$, one defines {\it the polar body $K^z$ of $K$ with respect to $z$} by,
$$K^z=(K-z)^* +z.$$
The celebrated {\it bipolar theorem} asserts that  if $0\in \inte(K)$,   then $$(K^*)^*=K \hbox{ and consequently that } (K^z)^z=K$$ for any convex body $K$ and any $z\in \inte(K)$.
 Let $h_K(y)=\max_ {x\in K}\langle x,y\rangle$  be the {\it support  function} of $K$.  Note that $K^*=\{h_K\le 1\},$ i.e.  $h_K(x)=\|x\|_{K^*}$,  when  $0\in \inte(K).$ Moreover,
   if $  z\in \inte(K)$, $$K^z= z+\{y \in \R^n; h_K(y)-\langle z,y\rangle \le 1\}$$ and thus 
$$\vol(K^z)=\  \int_{K^*} \frac{1}{(1- \langle z,y\rangle)^{n+1}} dy.
$$
It follows   that the map  $z\mapsto\vol(K^z)$ is a strictly convex positive $C^{\infty}$ function on $\inte(K)$.

A small effort is enough to prove that $\vol(K^z)\to +\infty$ when $z$ approaches the boundary  of $K$.
Consider $\theta \in S^{n-1}$, 
by Brunn's theorem, the function $f_\theta:[-h_{K^*}(-\theta), h_{K^*}(\theta)]\to [0, \infty)$ defined by
$f_{\theta}(t):=\vol_{n-1} ( \{y\in K^*; \langle y,\theta\rangle=t\})$ satisfies that $f_\theta^{1/(n-1)}$ is concave. Hence, one has $f_\theta(t)\ge f_\theta(0) (1-h_{K^*}^{-1}(\theta)t )^{n-1}$ for $0\le t\le h_{K^*}(\theta)$. Let 
$
r_K(\theta) =\min\{a\ge 0: \theta \in aK\}
$
be the radial function of $K$. Then $r_K(\theta)=h_{K^*}^{-1}(\theta)$  
 and letting  $z=s\theta$ for $0\le s<r_K(\theta)$, we get
 $$
 \vol(K^z)
 =\int_{K^*} \frac{1}{(1- \langle z,y\rangle)^{n+1}} dy
 =\int_{-h_{K^*}(-\theta) }^{ h_{K^*}(\theta)} \frac{f_{\theta}(t)}{(1-st)^{n+1} }dt$$
 $$\ge f_{\theta}(0)\int_{0}^{ h_{K^*}(\theta) }\frac{ (1-th^{-1}_{K^*}(\theta) )^{n-1}  }{ (1-st)^{n+1} } dt
= \frac{f_{\theta}(0)}{n(r_{K}(\theta)-s) }
\ge\frac{ \min\limits_{\theta\in S^{n-1}} f_{\theta}(0)}{n(r_K(\theta)-s) }\to +\infty,$$ 
when $s\to r_{K}(\theta)$, that is $z\to \partial K$.

Consequently,  the function $z\mapsto \vol(K^z)$ reaches its minimum  on $\inte(K)$ at a unique point $s(K)$, called the {\it Santal\'o point} of $K$. Computing its differential, we see that $s(K)$ is characterized by the fact that the centroid (center of mass) of $K^{s(K)}$ is $s(K)$ (see \cite{MSW}). For $t>0$ big enough,
the sets $\{z\in \inte(K); \vol(K^z) \le t\}$, called {\it Santal\'o regions of $K$}, were studied in \cite{MW1} (see also \cite{MW2}).
\begin{deff}
 The Santal\'o point of a convex body $K$, denoted $s(K)$, is the unique point in $\inte(K)$ such that
$$\vol(K^{s(K)})=\min_{z\in K} \vol(K^z).$$

The volume product of $K$ is
 $$\VP(K):= \vol(K) \vol(K^{s(K)}).$$
 \end{deff}
\noindent We mention the following facts:
\begin{itemize}
 \item If $K$ is centrally symmetric, then so is $K^*$, and one has $s(K)=0= s(K^*)$ and $\VP(K)=\VP(K^*)$. One has always $\VP( K^{s(K) } )\le \VP(K)$ and when $K$ is not centrally symmetric, it may happen that $\VP( K^{s(K) } )<\VP(K)$.
 
 \item It is easy to see that $s(K)$ is the unique point of $\inte(K)$ such that $0$ is the center of mass of $\big(K-s(K)\big)^*$ or $s(K)$ is the center of mass of $K^{s(K)}$.

 \item The map  $K\mapsto \VP(K)$ is affine invariant, that is if $A:\R^n\to \R^n$ is a one-to-one affine transform, then $\VP(AK)=\VP(K)$. This indicates that if $E$ if an $n$-dimensional normed space with a   closed unit ball $B_E$ and if $\phi:E\to\R^n$ is a one-to-one linear mapping, then
   $\VP(E):=\VP(\phi(B_E))$ does not depend on $\phi$. In particular, this property makes  $\VP(E)$  to be an important tool in  the local theory of normed space (see \cite{Pi, LMi, Tom}).

\item Let $K$ be a convex body such that $0\in \inte(K)$ and let $E$ be a linear subspace of $\R^n$. Then, $K\cap E$ is a convex body in $E$, endowed with the scalar product inherited from the Euclidean structure of $\R^n$, and $(K\cap E)^*$ (with polarity inside $E$) can be identified with $P_E (K^*)$, where $P_E$ is the orthogonal projection from $\R^n$ onto $E$. Consequently,  when $K$ is centrally symmetric, $\VP(K\cap E)=\vol_E(K\cap E)\vol_E(P_E (K^*))$, where $\vol_E$ denotes the Lebesgue measure on $E$.
      \end{itemize}
       
 In view of  these facts, a natural question is to compute, for fixed $n$, an upper and and a lower bound of $\VP(K)$ for all convex bodies $K$ in $\R^n$.  The existence of these bounds follows from the affine invariance which allows to consider the bounds  of $K\mapsto \VP(K)$ on a compact subset of the set of all convex bodies endowed with the Hausdorff metric.  Indeed,  if $B_2^n$ is the Euclidean ball, by John's theorem, one may reduce to study $\VP(K)$ for $B_2^n\subset K\subset n B_2^n$ in the general case or 
   $B_2^n\subset K\subset \sqrt{n} B_2^n$ when $K$ is centrally symmetric, which gives already rough but concrete estimates for these bounds.

It seems that the first one who dealt with this problem was Wilhelm Blaschke (1885-1962), who proved that  for $n=2$ and $n=3$, $\VP(K) \le \VP({\mathcal E})$, where ${\mathcal E}$ is any ellipsoid  \cite{Bl1},  \cite{Bl2}. Then, Mahler gave exact lower bounds for $\VP(K)$ for $n=2$ both in the general case and in the centrally symmetric case \cite{Ma1, Ma2}. In 1947, Luis Santal\'o (1911-2001) \cite{San2} extended the results of Blaschke  to all $n$ with the same tools as him. The case of equality for the upper bound was first proved much later in 1978 by Petty \cite{Pe4} (the argument given in \cite{San2}  was not quite valid).
\begin{thm} {\bf (Blaschke-Santal\'o-Petty)} If $K\subset \R^n$ is a convex body,  then
\begin{equation}\label{BSE}
\VP(K)\le \VP(B_2^n),
\end{equation}
with 
    equality if and only if $K$ is an ellipsoid.
\end{thm}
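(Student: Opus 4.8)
The plan is to prove the Blaschke–Santaló inequality \eqref{BSE} via Steiner symmetrization, together with a separate analysis of the equality case. The key structural fact I will rely on is that the volume product is affine invariant (established in the excerpt) and that, after reducing to the Santaló point, $\VP(K)=\vol(K)\vol((K-s(K))^*)$ depends only on the affine class of $K$. The heart of the argument is the following monotonicity claim: if $S_\theta$ denotes Steiner symmetrization in the direction $\theta\in S^{n-1}$, then for every convex body $K$,
\begin{equation*}
\VP\bigl(S_\theta K\bigr)\ge \VP(K).
\end{equation*}
Granting this, one invokes the classical fact that there is a sequence of Steiner symmetrizations (in suitably chosen directions) taking any convex body $K$ to a body arbitrarily close, in the Hausdorff metric, to a Euclidean ball of the same volume; since $K\mapsto\VP(K)$ is continuous on compact families of convex bodies (again noted in the excerpt, via the John-position reduction) and invariant under dilations, passing to the limit yields $\VP(K)\le \VP(B_2^n)$.

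\textbf{Proving the symmetrization inequality.} First I would reduce to the case where $K$ is symmetric about a hyperplane orthogonal to $\theta$ and the Santaló point issues are under control: by translating, assume $\theta=e_n$ and write points as $(x,t)$ with $x\in\R^{n-1}$, $t\in\R$, so that $K=\{(x,t): x\in P_{e_n^\perp}K,\ f(x)\le t\le g(x)\}$ for a convex function $f$ and a concave function $g$; then $S_{e_n}K=\{(x,t): |t|\le \tfrac12(g(x)-f(x))\}$. The crucial point is to understand how polarity interacts with this operation. The standard approach is to choose the center of polarity for $K$ cleverly — not necessarily the Santaló point — and to show that a fiberwise argument bounds $\vol((S_{e_n}K)^*)$ from below by $\vol((K-z)^*)$ for an appropriate $z$ on the axis. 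Concretely, one compares the $(n-1)$-dimensional sections of $(K-z)^*$ and of $(S_{e_n}K)^*$ by hyperplanes orthogonal to $e_n$: using the description of the polar body via the support function and a one-dimensional computation in the $e_n$ variable, the section of $(S_{e_n}K)^*$ at height $s$ turns out to contain (a translate of) the "average" of the sections of $(K-z)^*$ at heights $s$ and $-s$, and then the Brunn–Minkowski inequality applied fiberwise gives the volume comparison. Since $\vol(S_{e_n}K)=\vol(K)$ is automatic, and since $\vol((S_{e_n}K)^{s(S_{e_n}K)})\le\vol((S_{e_n}K)^*)$ while we may arrange $\vol((K-z)^*)$ to be $\vol(K^{s(K)})$ by choosing $z=s(K)$ up to an affine adjustment along the axis, the inequality $\VP(S_{e_n}K)\ge\VP(K)$ follows.

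\textbf{The equality case.} This is the part I expect to be the main obstacle, since it requires tracking equality through a limiting procedure where the sequence of directions is not canonical. The cleanest route is the one used by Petty and later simplified by others: rather than chasing equality through infinitely many symmetrizations, show directly that if $\VP(K)=\VP(B_2^n)$ then $K$ must be an ellipsoid, by analyzing the equality conditions in the Brunn–Minkowski / fiberwise step for a single well-chosen symmetrization and using the affine invariance to rule out non-ellipsoidal bodies. Alternatively, one can appeal to the characterization of the Santaló point — that the centroid of $K^{s(K)}$ is $s(K)$ — combined with a rigidity statement: a body maximizing $\VP$ must be invariant (up to affine maps) under Steiner symmetrization in every direction, and the only such bodies are ellipsoids. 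I would present the inequality in full and then sketch the equality case along these lines, citing \cite{Pe4} for the original resolution and noting that alternative proofs (e.g.\ via the functional Blaschke–Santaló inequality, or Meyer–Pajor's symmetrization argument) also yield the characterization; the detailed verification of the equality conditions in the one-dimensional and Brunn–Minkowski steps is routine but tedious, so I would only indicate where strict convexity forces $K$ into ellipsoidal form.
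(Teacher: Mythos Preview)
Your approach via Steiner symmetrization is exactly the one the paper adopts in Section~\ref{steiner-mp} (the Meyer--Pajor argument), and your outline of the fiberwise inclusion
\[
\tfrac{1}{2}\bigl((K-z)^*(t)+(K-z)^*(-t)\bigr)\subset (S_{e_n}K-z')^*(t)
\]
is the same as the paper's key step~\eqref{steiner-inclusion}. For the equality case, both you and the paper defer to Petty~\cite{Pe4} and later simplifications~\cite{SR1,MP2}.

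There is, however, a genuine gap in your treatment of the \emph{non-symmetric} case. The paper's proof in Section~\ref{steiner-mp} is explicitly restricted to centrally symmetric $K$: the symmetry gives $K^*(-t)=-K^*(t)$, so Brunn--Minkowski yields $\vol_{n-1}\bigl(\tfrac{K^*(t)+K^*(-t)}{2}\bigr)\ge\vol_{n-1}(K^*(t))$, which integrates directly. For a general body, Brunn--Minkowski only gives
\[
\vol_{n-1}\bigl((S_{e_n}K)^*(t)\bigr)\ge\sqrt{\vol_{n-1}(K^*(t))\,\vol_{n-1}(K^*(-t))},
\]
and this geometric mean does \emph{not} integrate to $\vol(K^*)$. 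Your phrase ``up to an affine adjustment along the axis'' does not repair this; no choice of $z$ on the axis makes the two slice functions coincide. The paper handles the general case by a different mechanism: either Meyer--Pajor's refined Theorem~2 (choosing $H$ to bisect the volume and obtaining a $z\in H$ with the $4\lambda(1-\lambda)$ factor), or, more cleanly, via Remark~\ref{rk:steiner-shadow}, which embeds Steiner symmetrization in a shadow system $(K_t)_{t\in[-1,1]}$ with $K_0=S_{e_n}K$, $K_1=K$, $K_{-1}$ the reflection of $K$, and then uses the convexity of $t\mapsto\VP(K_t)^{-1}$ (Theorem~\ref{shadow}) together with the evenness of $t\mapsto\VP(K_t)$ to conclude $\VP(K)\le\VP(S_{e_n}K)$. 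You should replace your hand-waved axis adjustment with one of these two arguments.
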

   Bambah \cite{B} gave rough lower bounds for $\VP(K)$. Guggenheimer  \cite{Gu1, Gu2} believed at some moment that he had a complete proof of the exact lower bound 
   $\VP(K)\ge \VP([-1,1]^n)=\frac{4^n}{n!}$ for $K$ centrally symmetric, but it appeared that his proof was incorrect.
   This was the situation in the 80's, when  new insights on the problem were given by Saint-Raymond \cite{SR1}, Reisner \cite{Re1,Re2}, Gordon and Reisner 
   \cite{GR} and Bourgain-Milman \cite{BM}.
    
    We conclude this section by an important tool in this theory:
 \subsection{Shadow systems}\label{secshadow}

 \begin{deff}\label{defshadow}  A {\bf shadow system} along a direction $\theta \in S^{n-1}$ is a family of convex sets $K_t \subset \R^n$ 
which are defined by
$K_t = \conv( \{x+ta(x)\theta; x\in M\} )$
where $M$ is a bounded subset in $\R^n$,  $a:M\to \R$ is a bounded function   and $t\in I$, an interval of $\R$ (and where $\conv(A)$  denotes the closed convex hull of a set $A \subset \R^n)$.
\end{deff}
     It  may  be observed that the classical Steiner-symmetrization can be seen as a shadow system such that the volume of $K_t$ remains constant (see Remark~\ref{rk:steiner-shadow} below). The notion of shadow system was introduced by Rogers and Shephard \cite{RS,Sh2} and  can be explained via an idea of  Shephard in \cite{Sh2}, 
     who pointed out that a shadow system  of convex bodies in $\R^n$  can be seen as a family of
projections of a $(n+1)$-dimensional convex set  on some $n$-dimensional subspace of $\R^{n+1}$. Namely, let $e_1, e_2, . . . , e_{n+1}$ be an
orthonormal basis of $\R^{n+1}$. Let $M$ be as before be a bounded subset of $\R^n$ (i.e. $M$ is contained in the linear span of $e_1, e_2, . . . , e_{n}$), let $a:M\to \R$ be a bounded function, $\theta\in S^{n-1}$  and $I$ an interval of $\R$.  We define a convex set $\tilde{K}\subset \R^{n+1}$ be the $(n+1)$-dimensional 
by
$$
\tilde{K}= \conv\{x + a(x)e_{n+1}: x \in M\}.
$$ 
For each $t \in I$, let $P_t:\R^{n+1}\to \R^n$ be the  projection  from $\R^{n+1}$ onto $\R^n$ along the
direction $e_{n+1} - t \theta$. Then, $$P_t(\tilde{K})=
 \conv( \{x+ta(x)\theta; x\in M\} )= K_t.$$ This interpretation permits to see that 
$t\mapsto \vol(K_t)$ is a convex function \cite{RS}. This result
is a powerful tool for obtaining geometric inequalities of isoperimetric type.
   
The following theorem connects shadow systems with the volume product. It was proved by Campi and Gronchi \cite{CG1} when the bodies $K_t$ are centrally symmetric  and by Meyer and Reisner in the general case \cite{MR3} (see also  \cite{FMZ}).
\begin{thm}\label{shadow}
    Let $(K_t)_{t\in(a,b)}$ be a shadow system of convex bodies in $\R^n$. Then the function $t\mapsto \vol \big((K_t)^{s(K_t)}\big)^{-1}$ is convex on $(a,b)$.
   \end{thm}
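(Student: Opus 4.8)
Here is how I would attack the statement.

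The plan is to reconstruct not only $K_t$ but also its polar at the Santal\'o point from a \emph{single} $(n+1)$-dimensional body, and then to read off the convexity from a Busemann-type theorem. Using Shephard's interpretation recalled above, write $K_t=P_t(\tilde K)$ with $\tilde K=\conv\{x+a(x)e_{n+1}:x\in M\}\subset\R^{n+1}$, and (discarding the trivial case where $\tilde K$ is not full-dimensional) compute support functions: $h_{K_t}(y)=h_{\tilde K}(y,t\langle\theta,y\rangle)$, so, when $0\in\inte K_t$, one has $(K_t)^*=\{y:(y,t\langle\theta,y\rangle)\in\tilde K^*\}$. Thus $(K_t)^*$ is the image of the central section $\tilde K^*\cap H_t$ under the injective linear map $y\mapsto(y,t\langle\theta,y\rangle)$, where $H_t:=w_t^\perp$, $w_t:=(t\theta,-1)/\sqrt{1+t^2}$, is the hyperplane through $0$ turning in the pencil through the fixed $(n-1)$-plane $(\theta^\perp\cap\R^n)\times\{0\}$. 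That map has Jacobian $\sqrt{1+t^2}$, so
\begin{equation}\label{eq:prop-sec}
\vol\big((K_t)^*\big)=\frac{1}{\sqrt{1+t^2}}\,\vol_n\big(\tilde K^*\cap H_t\big).
\end{equation}
Translating the shadow system by a fixed vector $z\in\R^n$ only replaces $\tilde K$ by $\tilde K-(z,0)$, so \eqref{eq:prop-sec} applies to $K_t-z$; using $\tilde K^*\cap H_t=(P_{H_t}\tilde K)^*$ (polarity inside $H_t$), the fact that $P_{H_t}$ maps $\R^n\times\{0\}$ \emph{onto} $H_t$ (since $w_t$ is not horizontal), and then minimizing over $z$, I obtain
\begin{equation}\label{eq:prop-santalo}
\vol\big((K_t)^{s(K_t)}\big)=\frac{1}{\sqrt{1+t^2}}\,\vol_n\Big(\big(P_{H_t}\tilde K\big)^{s(P_{H_t}\tilde K)}\Big).
\end{equation}

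Given \eqref{eq:prop-santalo}, the theorem follows from the Busemann-type statement that, for every convex body $C\subset\R^{n+1}$, the map $w\mapsto\vol_n\big((P_{w^\perp}C)^{s(P_{w^\perp}C)}\big)$ on the sphere is the radial function of a convex body $D=D(C)$; equivalently, that its reciprocal extends to a gauge $\|\cdot\|_{D(C)}$. Indeed, applying this to $C=\tilde K$, using \eqref{eq:prop-santalo} and $1$-homogeneity of $\|\cdot\|_{D(\tilde K)}$,
$$\vol\big((K_t)^{s(K_t)}\big)^{-1}=\sqrt{1+t^2}\ \big\|w_t\big\|_{D(\tilde K)}=\big\|(t\theta,-1)\big\|_{D(\tilde K)},$$
and since $t\mapsto(t\theta,-1)=(0,-1)+t(\theta,0)$ is affine, this is a gauge composed with an affine map, hence convex. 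The centrally symmetric case (Campi--Gronchi) is exactly this computation run with the classical Busemann theorem: then $s(K_t)=0$, $\tilde K$ is symmetric, $\big(P_{w^\perp}\tilde K\big)^{s}=(P_{w^\perp}\tilde K)^*=\tilde K^*\cap w^\perp$, so $D(\tilde K)$ is the intersection body of the symmetric body $\tilde K^*$, which Busemann's theorem asserts is convex, and \eqref{eq:prop-sec} already gives $\vol((K_t)^*)^{-1}=\|(t\theta,-1)\|_{I(\tilde K^*)}$. (When $\tilde K$ is not full-dimensional, $a$ is affine on $M$, so $K_t$ is an affine shear of $K_0$ and, by affine invariance of $\VP$, $\vol((K_t)^{s(K_t)})^{-1}=\vol(K_t)/\VP(K_0)$ is an affine function of $t$; the endpoints of $(a,b)$ are irrelevant since convexity is a local property.)

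The one genuinely hard step is the Busemann-type theorem, i.e.\ the convexity of $D(C)$ --- this is the content of Meyer--Reisner's extension. I would prove it as in the classical proof of Busemann's theorem: it suffices to fix a $2$-plane $\Pi$ and show that $w\mapsto\vol_n\big((P_{w^\perp}C)^{s}\big)$, restricted to the great circle of $S^n$ in $\Pi$, is the radial function of a planar convex set. Every $w^\perp$ with $w\in\Pi$ contains the $(n-1)$-plane $N:=\Pi^\perp$, so one slices $C$ by the affine translates of $N$, uses the Brunn--Minkowski inequality to know that the $(n-1)$-st root of the slice-volumes is concave on its two-dimensional support, and is reduced to an essentially two-dimensional inequality. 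The extra subtlety, absent in the symmetric case, is that the Santal\'o point $s(P_{w^\perp}C)$ moves with $w$; here one should exploit its characterization as the centroid of $(P_{w^\perp}C)^{s}$ (equivalently, that $0$ is the centroid of $(P_{w^\perp}C-s)^*$) to keep the relevant quantities convex as $w$ rotates. Finally, it is worth stressing why the Santal\'o point cannot be dispensed with: the seemingly natural attempt of writing $\vol((K_t)^{s(K_t)})^{-1}=\sup_{z\in\R^n}\vol((K_t-z)^*)^{-1}$ and arguing that each term is convex in $t$ \emph{fails} --- for the planar shadow system $K_t=\conv\{(1,0),(-1,1+t),(-1,-1+t)\}$ one computes $\vol\big((K_t)^{0}\big)=4/(1-t^2)$, so $t\mapsto\vol\big((K_t)^{0}\big)^{-1}$ is concave on $(-1,1)$, and only the upper envelope over $z$ is convex.
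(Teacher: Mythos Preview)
Your approach coincides with the paper's. The paper does not prove Theorem~\ref{shadow} in full: it cites \cite{CG1} for the symmetric case and \cite{MR3} for the general case, and only explains how the symmetric case reduces to Busemann's theorem via Shephard's $(n+1)$-dimensional body $\tilde K$, using the identity $(P_t\tilde K)^*=P_{e_{n+1}^\perp}\big(\tilde K^*\cap(e_{n+1}-t\theta)^\perp\big)$. Your computation \eqref{eq:prop-sec} is exactly this identity with the Jacobian $\sqrt{1+t^2}$ made explicit, and your reading of the symmetric case as $\vol((K_t)^*)^{-1}=\|(t\theta,-1)\|_{I(\tilde K^*)}$ is precisely the paper's remark that Campi--Gronchi's theorem is ``another formulation, with a new proof, of Busemann's theorem.''

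For the general case you correctly reduce to the Busemann-type statement that $w\mapsto\vol_n\big((P_{w^\perp}C)^{s(P_{w^\perp}C)}\big)$ is the radial function of a convex body, and you correctly identify this as the content of the Meyer--Reisner extension. What you do not do is prove it: your sketch (``slice by translates of $\Pi^\perp$, use Brunn--Minkowski, exploit the centroid characterization of the Santal\'o point to keep the relevant quantities convex as $w$ rotates'') names the ingredients but contains no argument for why the moving Santal\'o point cooperates --- and that is the entire difficulty. The paper is at the same level of completeness here (it simply cites \cite{MR3}), so this is not a divergence from the paper but a shared deferral.

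Your counterexample $K_t=\conv\{(1,0),(-1,1+t),(-1,-1+t)\}$ with $\vol\big((K_t)^0\big)=4/(1-t^2)$ is correct and worth keeping: it shows that $t\mapsto\vol((K_t)^z)^{-1}$ can be strictly concave for fixed $z$, so the naive pointwise-supremum route is genuinely blocked and the Santal\'o point is essential to the statement.
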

With the previous notations, if the $K_t$ are centrally symmetric, then $s(K_t)=0$  and thus 
$$
(K_t)^{s(K_t)}=K_t^*=(P_t(\tilde{K}))^*.
$$
As it was discussed above, the polar of the orthogonal projection of a convex body on a subspace $E$ is the section of its polar by $E$. But here $P_t$ is not an orthogonal projection, and we get $$(P_t(\tilde{K}))^*=P_{e_{n+1}^\perp}(\tilde{K}^*\cap (e_{n+1} - t \theta)^\bot),$$ where $P_{e_{n+1}^\perp}$ is the orthogonal projection from $\R^{n+1}$ onto $\R^n=e_{n+1}^\bot$. 
 In that context, Campi-Gronchi's theorem appears as another formulation, with a new proof, of Busemann's theorem \cite{Bu}  about the central hyperplane sections of a centrally symmetric body (see also \cite{MR5}).
This point of view was put forward in \cite{CFPP}, where such properties  were also generalized to more general measures than Lebesgue measure.

An important component related to Theorem \ref{shadow}, which is proved
in \cite[Proposition 7]{MR3} (see also  \cite{MR4}), is
the case when both $\vol(K_t)$ and $\vol((K_t)^{s(K_t)})^{-1}$ are affine functions of $t\in (a,b)$. In this case, all
the bodies $K_t$ are affine images of each other by special affine transformations. This
has been useful in identifying the cases of equality in inequalities involving volume
products, as well as in the proof of the results of \cite{MR4} and \cite{FMZ}. The proof of this component, that involves some ODE, was extended in \cite[Proposition 6.1]{MY} to  generalize this  result .

\section{The Blaschke-Santal\'o inequality}\label{BS}

The original proofs of the Blaschke-Santal\'o inequality \cite{Bl1, San1, San2, Leic1} were based  on the affine isoperimetric inequality. We show now how this  inequality implies the Blaschke-Santal\'o inequality and how conversely the Blaschke-Santal\'o inequality implies it. We refer to Section 10 in \cite{Sc} and to \cite{SW, Leic2} for details about the tools used in this section. If $M$ is a smooth convex body with positive curvature everywhere, its affine surface area $\mathcal{A}(M)  $ is defined by 
$$\mathcal{A}(M)= \int_{S^{n-1}}  f_M(u)^{\frac{n}{n+1}} du,$$
where $f_M: S^{n-1}\to \R_+$ is the curvature function, i.e. the density of the
measure $\sigma_M$ on $S^{n-1}$ with respect to the Haar measure on $S^{n-1}$, where for a Borel subset $A$ of $S^{n-1}$, $\sigma_M(A)$ is the $(n-1)$-Hausdorff measure of the set of all points in $\partial M$ such that their unit normal to $M$ is in $A$.
 The {\it affine isoperimetric inequality} says that at a fixed volume, ellipsoids have the largest affine surface area among all convex bodies with
positive continuous curvature. In other words,
\begin{equation}\label{affine} \mathcal{A}(M)^{n+1} \le n^{n+1} v_n^2\vol(M)^{n-1},
\end{equation}
where $v_n=\vol(B_2^n)$. Let $L$ be another convex body, H\"older's inequality, one has 
\begin{align}\label{Holder}\mathcal{A}(M)^{n+1}&\le  \left(\int_{S^{n-1}} 
 h_L(u)f_M(u) du\right)^n
\int_{S^{n-1}} h_L^{-n}(u)du \nonumber\\&=n^{n+1}V(M[n-1],L)^n \vol(L^*),
\end{align}
where $V(M[n-1],L)=V(M[n-1],L[1])=\frac{1}{n} \int_{S_{n-1}} h_L(u)f_M(u) du$ is a mixed volume of $M$ and $L$, which can also be defined by the formula, for $t\ge 0$,   $$\vol(M+tL)=\sum_{k=1}^n \binom{n}{k}V(M[n-k],L[k])t^k.$$
We refer to \cite{Sc} for precise definition of properties  mixed volumes. Using inequality (\ref{affine}) and the Minkowski first inequality
$$
V(M[n-1],L)^n\ge \vol(M)^{n-1} \vol(L),
$$ one gets
\begin{equation}\label{facile}\mathcal{A}(M)^{n+1}\le n^{n+1}v_n^2\vol(M)^{n-1}\le n^{n+1}v_n^2 V(M[n-1],L)^n \vol(L)^{-1}.
\end{equation}
Now given a convex body $K$, let   $s=s(K)$ be  its Santal\'o point; then $0$ is the centroid of $K-s$ so that 
$$
\int_{S^{n-1}} uh_{K-s}(u)^{-(n+1)}du=0,
$$ 
and thus by Minkowski’s existence theorem (see Section 8.2 \cite{Sc}), there exists a convex body $M$
such that $f_M= h_{K-s}^{-(n+1)}$. Set  $L=K-s$, then there is equality in (\ref{Holder}) so that by (\ref{facile})
\begin{align*}
n^{n+1}V(M[n-1],K-s)^n  \vol((K-s)^*)&=\mathcal{A}(M)^{n+1} \\&\le n^{n+1}v_n^2 V(M[n-1],K)^n \vol(K-s)^{-1},
\end{align*}
which gives the Blaschke-Santal\'o inequality \eqref{BSE}.

Conversely, let  $M$ be a convex body with positive curvature, and suppose that $0$  is the Santal\'o point of $M$ and that Blaschke-Santal\'o inequality holds for $M$. By (\ref{Holder}) with $L=M$, we get
$$ \mathcal{A}(M)^{n+1}\le  n^{n+1}V(M[n-1],M)^n \vol(M^*)
\le n^{n+1}v_n^2 \vol(M)^{n-1},$$
which is  the affine isoperimetric inequality.

For examples of other results of this type and relations between affine surface area and the volume product, see Petty \cite{Pe3, Pe4}, Lutwak  \cite{Lu1}, Li, Sch\"utt and Werner and \cite{LSW}, Nasz\'odi, Nazarov and  Ryabogin \cite{NNR} and Hug \cite{Hu}, who gave a proof of the affine isoperimetric inequality using Steiner symmetrization and studied the cases of equality.

\subsection{A proof of the Blaschke-Santal\'o inequality  in the centrally symmetric case}\label{steiner-mp} In \cite{MP1, MP2}, Meyer and Pajor  used the classical Steiner symmetrization for giving a proof  which we sketch in this section. Various symmetrizations of sections appeared also in \cite{SR1} and \cite{Ba1}.
\vskip 2mm
\noindent{\it Step 1:} We prove first that if $H$ is a linear  hyperplane of $\R^n$ and $S_H K$ is the {\it Steiner symmetral of $K$ with 
 respect  to $H$}, as it will be defined below, then
  $$
  \vol(K^*) \le \vol \big( (S_HK)^*\big).
  $$
  To simplify notations, suppose that $H= \R^{n-1}\times \{0\}$; as before, let $P_H:\R^n\mapsto H$ be the orthogonal projection onto $H$. Then  $K$  may  be described as follows:
 $$K=\{x+se_n;\ x\in P_HK, s\in I(x)\} $$ where  for $x\in P_HK$, $I(x)=\{s\in\R; x+se_n\in K\}$ is  a closed interval.
  The Steiner symmetral of $K$  with respect to $H$, defined as 
  $$S_HK=\left\{x+se_n; x\in PK,s\in \frac{I(x)-I(x)}{2}\right\} $$
is  a convex body symmetric with respect to $H$, such that
$$\vol(K)=\vol(S_H K).$$
  For  $t\in \R$,  let $K^*(t):=\{y\in H; y+te_n \in K^*\}$  be the section of $K^*$ by the hyperplane $\{ x_n =t \}$ and $J=\{t\in \R;  K^*(t)\not=\emptyset\}$. Then
$$K^* =\{ y+t e_n;  t\in J, y\in K^*(t) \}.$$
By the symmetry of $K^*$,  one has $K(-x)=-K(x) $   for every  $x\in P_HK$,  so that for every $t\in J$, $y_1\in K^*(t) $, $y_2\in K^*(-t) $  and  $s_1,s_2\in K(x)$, one has:
$$
\langle x,y_1\rangle+s_1 t  \le 1\hbox{ and } \langle x,y_2\rangle -s_2t  \le 1.
$$
Adding these two inequalities, we get that for every $x\in P_HK$,  $s=\frac{s_1-s_2}{2}\in \frac{1}{2} (I(X)-I(X))$,  $t\in J$ and $y=\frac{y_1+y_2}{2}\in
 \frac{1}{2}\big(K^*(t)+ K^*(-t)\big)$,  one has
 $$\langle x,y\rangle+st \le 1.$$
 Thus  for every $t\in J$,  
 \begin{equation}\label{steiner-inclusion}
 \frac{1}{2}\big( K^*(t) +K^*(-t) \big)\subset (S_H K)^*(t).
\end{equation}
By the symmetry  of $K$, one has  $K^*(-t) = -K^*(t)$. It follows from Brunn-Minkowski's theorem that  
$\vol_{n-1}\big( (S_H K)^*(t)\big) \ge \vol_{n-1}\big(K^*(t) \big)$ and, integrating in $t$, we get that 
$$\vol\big((S_HK)^*\big)= \int \vol_{n-1}\big( (S_H K)^*(t)\big) dt \ge 
\int \vol_{n-1}(K^*(t) ) dt=\vol(K^*).$$
 One get thus that $\VP(S_H K)\ge \VP(K)$.

\noindent {\it Step 2:}  It is well known that there exists a sequence of hyperplanes $(H_m)_m$ such that if $K_0=K$ and for $m\ge 1$,  $K_m:=S_{H_m}K_{m-1}$, then the sequence $(K_m)_m$ converges to the Euclidean ball $R_KB_2^n$ with same volume as $K$ (see for example Section 10.3 in \cite{Sc}). Thus  $$\VP(K)\le \VP(K_{n-1})\le \VP( K_n)\le \VP(R_KB_2^n)=\P(B_2^n).$$
 
 The case of equality in Blaschke-Santal\'o inequality was first proved by Petty \cite{Pe4}, using sharp differential geometry arguments. When $K$ is centrally symmetric, a new elementary proof was given by Saint Raymond \cite{SR1}, using Minkowski symmetrization of the hyperplane sections (see also \cite{Ba1}). These ideas were generalized by Meyer-Pajor  \cite{MP2} to give an elementary proof for the general case, and  a somewhat stronger result, also based on Steiner's symmetrizations:
\begin{thm}{\bf (Meyer-Pajor \cite{MP2}) } For every convex body $K$  and every hyperplane $H$ separating $\R^n$ into two closed half space $H_+$ and $H_-$, denoting $\lambda=\frac{\vol( H_+\cap K)}{\vol(K)}$, there exists $z\in H$  such that 
 $\vol(K)\vol(K^z)\le \frac{\VP({\mathcal E})}{ 4\lambda(1-\lambda)}.$
\end{thm}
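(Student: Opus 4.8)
The plan is to reduce the statement to the symmetric Blaschke–Santaló inequality (inequality \eqref{BSE}) by a doubling/symmetrization construction that turns $K$ together with the hyperplane $H$ into a centrally symmetric body in one higher dimension, in such a way that the ratio $\lambda(1-\lambda)$ appears as the ``cost'' of the asymmetry. Concretely, I would set up coordinates so that $H=\{x_n=0\}$, write $K_+ = K\cap H_+$ and $K_- = K\cap H_-$, and consider in $\R^{n+1}$ (with extra coordinate $e_{n+1}$) the convex hull
$$
\widetilde K \;=\; \conv\Big( \big(K_+ \times\{c_+\}\big)\ \cup\ \big((-K_-)\times\{-c_-\}\big)\Big)
$$
for suitably chosen heights $c_\pm>0$; here $-K_-$ is the reflection of $K_-$ through the origin of $\R^n$. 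The idea is that $\widetilde K$, or rather a shifted version of it, can be made centrally symmetric about a point on the axis, because the ``top'' slice is an affine copy of the ``bottom'' slice reflected. One then applies the symmetric case of Blaschke–Santaló to $\widetilde K$ in $\R^{n+1}$ and computes both $\vol(\widetilde K)$ and $\vol(\widetilde K^*)$ in terms of the original data.

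The key steps, in order, are: (1) Choose the heights $c_\pm$ and the center of symmetry $z_0$ so that $\widetilde K - z_0 = -(\widetilde K - z_0)$; this will force a relation like $c_+/c_- = \vol(K_-)/\vol(K_+)$ or an analogous balancing condition, and it is exactly here that $\lambda$ enters. (2) Compute $\vol_{n+1}(\widetilde K)$ by integrating slices along $e_{n+1}$: since $\widetilde K$ interpolates linearly between the two base slices, Fubini together with the Brunn–Minkowski behavior of the slices gives $\vol_{n+1}(\widetilde K)$ as an explicit constant (depending on $c_\pm$) times a quantity controlled by $\vol(K)$. (3) Identify the polar $\widetilde K^*\subset\R^{n+1}$: because $\widetilde K$ is a convex hull of two $n$-dimensional slabs, its polar is an intersection of two half-spaces with the products $(K_+)^*$ and $(K_-)^*$ in the appropriate directions, and the slice of $\widetilde K^*$ by the hyperplane $e_{n+1}^\perp$ (through the center) is precisely $K^z$ for the point $z\in H$ we are after — this uses the projection/section duality recalled in Section \ref{sec1980}. (4) Feed $\vol(\widetilde K)\,\vol(\widetilde K^*)\le \VP(B_2^{n+1})$ into these computations; after the constants from steps (2) and (3) are multiplied out, the factor $\tfrac{1}{4\lambda(1-\lambda)}$ and the two-dimensional ``section'' $\VP(\mathcal E)=\VP(B_2^2)$ should emerge, the higher-dimensional Euclidean volume constants canceling because only the two-dimensional ``axis vs. base'' degrees of freedom are genuinely being optimized.

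The main obstacle I expect is step (3): getting the polar body of the one-higher-dimensional hull exactly right, and in particular verifying that its central slice is literally $(K-z)^*+z$ for the correct $z\in H$, rather than merely something comparable to it. This requires care with the affine bookkeeping (the shift by $z_0$, the non-orthogonal nature of the relevant projection, and the asymmetric heights $c_\pm$), and it is the place where an inequality could sneak in and degrade the constant. A cleaner alternative, which I would try in parallel, is to avoid the explicit $(n+1)$-dimensional body and instead argue directly by a Steiner-type symmetrization of the sections of $K^z$ parallel to $H$ — mimicking Step 1 of the Meyer–Pajor proof sketched above but keeping track of the asymmetry between $H_+$ and $H_-$ via the weight $\lambda$ — and then to apply the one-dimensional optimization $\sup_{0<\lambda<1}\lambda(1-\lambda)=1/4$ at the end. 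Either way, the final assembly is a matter of choosing $z\in H$ to be the point whose horizontal section data is balanced, and then invoking \eqref{BSE} in the appropriate dimension.
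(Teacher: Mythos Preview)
Your main construction does not work. For $\widetilde K=\conv\big((K_+\times\{c_+\})\cup((-K_-)\times\{-c_-\})\big)$ to be centrally symmetric about some point $z_0=(a,h)\in\R^{n+1}$, the top slice reflected through $z_0$ must coincide with the bottom slice; this forces $K_+$ and $K_-$ to be translates of each other (namely $K_+-a=a+K_-$), which is false for a generic convex body. So there is no choice of $c_\pm$ and $z_0$ that makes $\widetilde K$ symmetric, and you cannot invoke the symmetric Blaschke--Santal\'o inequality for it. The construction $\conv\big((K\times\{1\})\cup((-K)\times\{-1\})\big)$ \emph{is} symmetric, but it ignores $H$ entirely and cannot produce the factor $\lambda(1-\lambda)$.

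There is also a misreading of the statement: $\VP(\mathcal E)$ is $\VP(B_2^n)$, the volume product of an $n$-dimensional ellipsoid, not $\VP(B_2^2)$. Your dimensional bookkeeping in step~(4) (``only the two-dimensional \dots\ degrees of freedom are genuinely being optimized'') is therefore off from the start; whatever cancellations occur must leave $\VP(B_2^n)$, not $\VP(B_2^2)$.

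Your alternative suggestion is the one the paper actually points to: the result in \cite{MP2} is obtained by Steiner symmetrizations. The correct scheme is to symmetrize $K$ with respect to hyperplanes \emph{containing the direction $e_n$ normal to $H$}; such symmetrizations preserve both $\vol(K)$ and $\lambda$, and one shows (via an inclusion of the type \eqref{steiner-inclusion}, now keeping track of a center $z\in H$) that the polar volume about an appropriate $z\in H$ does not decrease. Iterating reduces to the case where $K$ is a body of revolution about $\R e_n$, where the bound with the factor $4\lambda(1-\lambda)$ is a direct computation. Your proposal only gestures at this (``mimicking Step~1\dots\ keeping track of the asymmetry via $\lambda$''); the substantive content---choosing $z\in H$ at each step so that the analogue of \eqref{steiner-inclusion} holds, and carrying out the axially symmetric endgame---is missing.
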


\begin{remark}\label{rk:steiner-shadow}  Notice
  that the Steiner's symmetral of a convex body $K$ with respect to a direction $u\in S^{n-1}$ can be written as  a part of a shadow system in the following way:  for $y\in P_{u^\perp}K$, let $I(y)=\{s\in\R; y+su\in K\}$. For $t\in[-1,1]$, let 
  \[
  K_t=\left\{y+su; s\in \frac{1+t}{2}I(y)-\frac{1-t}{2}I(y)\right\}.
  \]
  Then  $K_1=K$, $K_0=S_{u^\perp}K$ and, for every $t\in[-1,1]$, $K_{-t}$ is the reflection of $K_t$ with respect to the hyperplane $u^\perp$. This implies that the function $t\mapsto\P(K_t)$ is even. Moreover, since $\vol(K_t)=\vol(K)$ is constant for $t\in[-1,1]$,  using Theorem \ref{shadow}, the function $t\mapsto\P(K_t)^{-1}$ is convex. It follows that $\P(K_t)$ is maximized at $0$, which proves that the volume product is non-decreasing by Steiner symmetrization for any convex body, recovering Meyer-Pajor's result \cite{MP2}. Using again an appropriate sequence of Steiner symmetrizations, this gives the general Blaschke-Santal\'o inequality. The preceding observation is due to \cite{MR3}. 
 \end{remark}

\subsection{An harmonic analysis proof of the Blaschke-Santal\'o inequality}  We follow the work of Bianchi and  Kelli \cite{BK}.
Harmonic analysis plays an essential role in the study of duality and the volume product.  The main idea was discovered by Nazarov \cite{Na}, who used it to provide a proof of the Bourgain-Milman inequality \cite{BM} and was adopted by Bianchi and Kelly to give a very elegant proof of the  Blaschke-Santal\'o inequality. We  refer to \cite{StWe}  and \cite{Ho} for basic facts in harmonic analysis. 

Let  $K$  be a convex symmetric body  in $\R^n$. Let $F \in L_2(\R^n)$ such that its  Fourier transform $\widehat{F}=0$ a.e. on $\R^n\setminus K$. Then 
$F$ is the restriction to $\R^n$ of  the entire function still denoted $F$ defined by: 
$F(z)=\int_{K} e^{2\pi i \langle z,  \xi \rangle} \widehat{F}(\xi) d\xi \hbox{ for }z \in {\mathbb C}^n, $
which satisfies the following inequality giving a first hint on why the theory is so useful:
$$
|F(i y)|\!
=\!\left | \int_K e^{-2\pi  \langle y, \xi \rangle} \widehat{F}(\xi) d\xi\right |  
\le \int_K e^{2\pi  \sup\limits_ {\xi  \in K} |\langle \xi, y \rangle|} | \widehat{F}(\xi)| d\xi = e^{2 \pi \|y\|_{K^*} }\!\!\!  \int_K | \widehat{F}(\xi)| d\xi.$$
Thus  for some $C>0$, one has $|F(i y)| \le C e^{2\pi \|y\|_{K^*} }$ for all $y\in \R^n$, i.e. $F$ is {\it of exponential type $K^*$}. This fact  is the elementary part of the following classical theorem:
\begin{thm}\label{PW}(Paley-Wiener)
Let $F \in L_2(\R^n)$ and $K$ be a convex symmetric body. Then the following are equivalent:
 \begin{itemize}
\item  $F$ is the restriction to $\R^n$ of an entire function of exponential type $K^*$.
\item  The support of $\widehat{F}$ is contained in $K$.
\end{itemize}
\end{thm}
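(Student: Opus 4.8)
The plan is to prove the Paley--Wiener theorem in its multivariate form, with the convex symmetric body $K^*$ playing the role of the ``spectral set''. The elementary implication---that if $\supp\widehat F \subset K$ then $F$ extends to an entire function of exponential type $K^*$---is already carried out in the excerpt above: one simply defines $F(z)=\int_K e^{2\pi i\langle z,\xi\rangle}\widehat F(\xi)\,d\xi$ for $z\in\Co^n$, checks by dominated convergence (using that $K$ is bounded) that this is entire, and bounds $|F(x+iy)|$ by $e^{2\pi\|y\|_{K^*}}\int_K|\widehat F|$. So the substance of the proof is the converse: given $F\in L_2(\R^n)$ whose entire extension satisfies $|F(z)|\le Ce^{2\pi h_K(\operatorname{Im} z)}$ (here $h_K(y)=\|y\|_{K^*}$ since $K$ is symmetric), one must show $\widehat F$ is supported in $K$.

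First I would reduce to the one-dimensional Paley--Wiener theorem by slicing. Fix a direction $u\in S^{n-1}$ and a point $x_0$ in the orthogonal complement $u^\perp$; consider the function of one complex variable $g(w)=F(x_0+wu)$. This is entire on $\Co$, lies in $L_2(\R)$ as a function of the real variable (for a.e. $x_0$, by Fubini), and inherits the exponential-type bound $|g(w)|\le Ce^{2\pi|\operatorname{Im}w|\,h_K(u)}$ because $h_K(\operatorname{Im}(wu))=|\operatorname{Im}w|\,h_K(u)$ when $h_K$ is the support function (homogeneity and symmetry). The classical one-variable Paley--Wiener theorem then tells us that the one-dimensional Fourier transform of $t\mapsto g(t)$ is supported in the interval $[-h_K(u),h_K(u)]$.

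Next I would translate this slice-wise spectral information back into a statement about $\widehat F$ on $\R^n$. The Fourier transform of the restriction of $F$ to the line $x_0+\R u$ is, by the projection-slice relationship, the integral of $\widehat F$ over the affine hyperplane $\{\xi:\langle\xi,u\rangle = s\}$ (up to a phase depending on $x_0$); more cleanly, the partial Fourier transform of $F$ in the $u$-direction, evaluated at frequency $s u + \eta$ with $\eta\in u^\perp$, vanishes for $|s|>h_K(u)$. Hence $\widehat F(\xi)=0$ whenever $|\langle\xi,u\rangle|>h_K(u)$, i.e. $\widehat F$ is supported in the slab $\{\xi:|\langle\xi,u\rangle|\le h_K(u)\}=\{\xi:\langle\xi,u\rangle\le h_{K}(u)\}\cap\{\langle\xi,-u\rangle\le h_K(u)\}$. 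Intersecting these slabs over all $u\in S^{n-1}$ gives exactly $\bigcap_u\{\xi:\langle\xi,u\rangle\le h_K(u)\}=K$, since a convex body equals the intersection of the half-spaces determined by its support function. Therefore $\supp\widehat F\subset K$, completing the equivalence.

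The main obstacle is the clean execution of the one-dimensional step together with the Fubini/measurability bookkeeping: one must justify that for a.e.\ line the restriction $g$ is genuinely in $L_2(\R)$, that the exponential-type bound survives restriction (this is immediate from homogeneity of $h_K$ but must be stated), and that the partial Fourier transform is well-defined and relates correctly to $\widehat F$. A technically cleaner alternative that avoids slicing is to prove the $n$-dimensional statement directly: contour-shift the inversion integral $F(x)=\int \widehat F(\xi)e^{2\pi i\langle x,\xi\rangle}d\xi$ in an imaginary direction $iy$, using the growth bound and a Phragm\'en--Lindel\"of / Cauchy-theorem argument to show $\int_{\R^n}\widehat F(\xi)e^{2\pi i\langle x,\xi\rangle}e^{-2\pi\langle y,\xi\rangle}\,d\xi$ is bounded uniformly in $x$ and then let $\|y\|\to\infty$ in a fixed direction to force $\widehat F$ to vanish outside the corresponding supporting half-space of $K$. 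Either route works; I would present the slicing argument since it reduces everything to the textbook one-variable theorem, citing \cite{StWe} or \cite{Ho} for that input.
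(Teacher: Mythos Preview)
The paper does not actually prove Theorem~\ref{PW}. It presents only the elementary implication (support of $\widehat F$ in $K$ $\Rightarrow$ entire extension of exponential type $K^*$) in the paragraph immediately preceding the statement, and then states the full equivalence as a classical result, having already referred the reader to \cite{StWe} and \cite{Ho} for the harmonic-analysis background. There is no proof of the converse direction anywhere in the paper.

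Your proposal therefore goes further than the paper does: you sketch the hard direction via reduction to the one-variable Paley--Wiener theorem along complex lines, together with the projection--slice identification and the representation of $K$ as the intersection of its supporting slabs. This is one of the standard routes and is essentially correct, with exactly the caveats you already flag (the a.e.\ $L_2$ restriction via Fubini, and the identification of the one-dimensional transform of $g$ with the partial transform of $F$). Your alternative contour-shift / Phragm\'en--Lindel\"of suggestion is also standard and closer to how \cite{Ho} handles it. Since the paper simply cites the result, either approach is acceptable; there is nothing to compare against.
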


Now we are ready to present  the
\vskip 2mm\noindent
\noindent {\bf Proof of the Blaschke-Santal\'o inequality adapted from Bianchi and Kelly \cite{BK}.}
Let $F=\frac{1}{\vol( K)}  \widehat{ {\bf 1}_K}$, where ${\bf 1}_K$ is the characteristic function of $K$:
$$
{\bf 1}_K(x)=\begin{cases}
1, \mbox{ for } x \in K\\
0, \mbox{ for } x \not \in K.
\end{cases}
$$Then  $F(0)=1$, $F\in L_2(\R^n)$, $F$ is continuous and even, and can be extended as an entire function on $ {\mathbb C}^n$, still denoted $F$, 
as 
$$
F(z_1,\dots, z_n)=\frac{1}{\vol( K)} \int_{K}e^{2i\pi(\sum_{k=1}^n z_i x_i)} dx_1\dots dx_n.
$$
For $\theta \in S^{n-1}$ and $z\in {\mathbb C}$,  let $F_\theta(z)=F(z\theta)$.  Then, by the easy part of Paley-Wiener theorem, $F_\theta $ is  an even entire function of exponential type $[-\|\theta\|_{K^*}^{-1},  \|\theta\|_{K^*}^{-1}]$. Moreover, since $F$ is entire and even, there exists  an entire function $H_\theta:{\mathbb C}\to {\mathbb C}$ such that $H_\theta(z^2)= F_\theta(z)=F(z\theta)$. Finally, we define $R_\theta: {\mathbb C}^n  \to{\mathbb C}$ as a radial extension of $F_\theta$ by
$$R_\theta(z_1,\dots, z_n)=H_\theta(z_1^2+\dots +z_n^2).$$ 
Note that $z\mapsto R_\theta(z)$ is an entire function which satisfies $R_\theta(0)=F(0)=1$.  Moreover, since  $F_\theta$ is of exponential type $[-\|\theta\|_{K^*}^{-1}, \|\theta\|_{K^*}^{-1}]$,  $R_\theta $ is of exponential type $\|\theta\|_{K^*}^{-1}B_2^n$. It follows, from the Paley-Wiener theorem, that  the support of  the restriction to $\R^n$ of $\widehat{R_\theta}$ is contained in  $(\|\theta\|_{K^*}^{-1}B_2^n)^*=\|\theta\|_{K^*}B_2^n.$  Since $R_\theta\in L_2(\R^n)$, one may write, using the Plancherel equality and the Cauchy-Schwarz inequality with $v_n=\vol(B_2^n)$, 
\begin{align} \label{CSfourier}
    \int_{\R^n} |R_{\theta}(x) |^2dx=&\int_{\|\theta\|_{K^*} B_2^n} |\widehat{R_{\theta}}(x) |^2dx\ge \frac{|\int_{\|\theta\|_{K^*} B_2^n} \widehat{R_{\theta}}(x) dx|^2}{v_n \|\theta\|_{K^*}^n}\\
    =&\frac{|\int_{\R^n} \widehat{R_{\theta}}(x) dx|^2}{v_n \|\theta\|_{K^*}^n}
=\frac{R_{\theta}(0)}{ v_n\|\theta\|_{K^*}^n }= \frac{1}{v_n\|\theta\|_{K^*}^n }. \nonumber
\end{align}
If $|x|=r=|re_1|$, one has 
\begin{equation}\label{rth}
R_\theta(x)=F(|x|\theta)=F(r\theta)=R_\theta(re_1),
\end{equation}
so that $R_\theta$ is rotation invariant on $\R^n$.
Thus 
\begin{align*}
 \frac{1}{\vol(K) }=& \int_{\R^n} |{\widehat F}(x)|^2 dx=
  \int_{\R^n} | F(x)|^2 dx=
 \int_{S^{n-1}}\int_0^{+\infty} |F(r\theta)|^2 r^{n-1}dr d\theta\\
 =&\int_{S^{n-1}}\int_0^{+\infty} |R_\theta(re_1)|^2 r^{n-1}dr d\theta
 =\frac{1}{nv_n} \int_{\R^n} |R_\theta(x)|^2dx.
 \end{align*}
 It follows that
$$\frac{1}{\vol(K) }
= \frac{1}{nv_n} \int_{S^{n-1}} \int_{\R^n} |R_\theta(x)|^2 dxd\theta\ge \frac{1}{  nv_n^2}  \int_{S^{n-1}} \|\theta\|_{K^*}^{-n} d\theta=\frac{\vol(K^*)}{\vol(B_2^n)^2}, $$
which is the Blaschke-Santal\'o inequality.

\vskip 1mm\noindent
Bianchi and Kelly \cite{BK} also provided a proof of the equality case. Indeed, 
assume that we have equality in the  Blaschke-Santal\'o inequality, then we must have equality  in (\ref{CSfourier}). Thus, for every $\theta\in S^{n-1}$ and some $c_{\theta} \in \R$, one has $\widehat{R_{\theta}}=c_{\theta} {\bf 1}_{\|\theta\|_{K^*} B_2^n}$ on $\R^n$ and 
$$R_{\theta}(x)= \int_{\R^n} e^{-2i\pi \langle x, y \rangle} \widehat{R_{\theta}}(y) dy=c_\theta \int_{\|\theta\|_{K^*} B_2^n} e^{-2i\pi \langle x, y \rangle} dy.$$
Moreover, since $R_{\theta}(0)= 1$,  one gets $c_{\theta} \vol(\|\theta\|_{K^*} B_2^n)=1$. Next, by (\ref{rth}), we get 
\begin{equation}\label{fur}\frac{1}{\vol( K)} \int_{K}e^{-2i\pi r\langle \theta, y \rangle}  dy=F(r\theta)=\frac{1}{\vol(\|\theta\|_{K^*} B_2^n)} \int_{\|\theta\|_{K^*} B_2^n} e^{-2i\pi r \langle \theta, y \rangle} dy.
\end{equation}
If $M$ is a convex body, $\theta\in S^{n-1}$ and $t\in \R$,  let $A_{M,\theta}(t)=\vol_{n-1}\big(M \cap (\theta^\perp +t \theta)\big)$. One has
$$
 \widehat{A_{K, \theta}}(r)=\int_\R  e^{-2i\pi r t }  A_{K, \theta}(t) dt = \int_{K}e^{-2i\pi r\langle \theta, y \rangle}  dy.
$$
Inverting the Fourier transform,  it follows from (\ref{fur}) that for all $t\in \R$ and $\theta \in S^{n-1},$ 
\begin{equation}\label{koldob}
\frac{1}{\vol( K)} A_{K, \theta}(t) =  \frac{A_{\|\theta\|_{K^*} B_2^n, \theta}(t)}{\vol(\|\theta\|_{K^*} B_2^n)} = \frac{A_{B_2^n, \theta}\left(t\|\theta\|_{K^*}^{-1}\right)}{\|\theta\|_{K^*}\vol( B_2^n)}.
\end{equation}
Now,  for $\theta \in S^{n-1},$ one has by (\ref{koldob})
\begin{align*}
\int_K \langle x,\theta\rangle^2dx=\int_{\R} t^2 A_{K, \theta}(t) dt 
&= \frac{\vol( K)}{\|\theta\|_{K^*}\vol( B_2^n)}\int_{\R} t^2A_{B_2^n, \theta}\left(t\|\theta\|_{K^*}^{-1}\right)dt\\
&=  \frac{\vol( K)}{\vol( B_2^n)}\|\theta\|_{K^*}^2 \int_{\R} u^2A_{B_2^n,\theta}(u) du
\end{align*}
and since by rotation invariance $A_{B_2^n,\theta}(u) $ does not depend on $\theta\in S^{n-1}$, one gets for some $c>0$ and all $\theta
\in S^{n-1}$,
$$\|\theta\|_{K^*}=c\left(\int_K \langle x,\theta\rangle^2dx\right)^{1/2}, $$ which proves that $K^*$ and thus $K$ is an ellipsoid (the last arguments
are inspired from \cite{MR0}).

\subsection{Further results and generalizations} Let us present a few  results which may be considered as offspring's of the Blaschke-Santal\'o inequality.
\subsubsection{Stability} K. B\"or\"ozcky \cite{Bor} established a stability version of the Blaschke-Santal\'o inequality, later improved by K. Ball and K. B\"or\"ozcky in \cite{BB}. Let  $d_{BM}(K,L)$ be the Banach-Mazur distance between two convex bodies $K$ and $L$ in $\R^n$:
$$
d_{BM}(K,L)=\inf\{ d>0: K-x\subseteq T(L-y)\subseteq d (K-x), \mbox{ for some  } T \in GL(n) \mbox{ and }x,y \in \R^n \}.
$$
The following stability theorem was proved in \cite{BB}:
\begin{thm} If $K$ is a convex body in $\R^n,$ $n\ge 3$, such that for some $\varepsilon>0$ one has
$$
(1+\varepsilon)\P(K)\ge \P(B_2^n),
$$
then 
$$
\log\big(d_{BM}(K, B_2^n)\big) \le c_n\varepsilon^{\frac{1}{3(n+1)}}|\log \varepsilon|^\frac{2}{3(n+1)}.
$$
where $c_n$ is an absolute constant depending on $n$ only.
\end{thm}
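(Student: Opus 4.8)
The plan is to make the equivalence from Section~\ref{BS} between the Blaschke--Santal\'o inequality and the affine isoperimetric inequality \emph{quantitative}, and then feed into the resulting chain of inequalities stability versions of its two nontrivial ingredients: the affine isoperimetric inequality \eqref{affine} and Minkowski's first inequality.

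First I would normalise: by affine invariance of $\P$ we may assume $\vol(K)=v_n$ and that the Santal\'o point of $K$ is the origin, so that $0$ is the centroid of $K^*$ and the hypothesis reads $\vol(K^*)\ge (1+\varepsilon)^{-1}v_n$. By Minkowski's existence theorem there is a convex body $M$ with curvature function $f_M=h_K^{-(n+1)}$ (a positive continuous function since $0\in\inte K$; a routine smoothing reduces to the case $M\in C^2_+$ if one wants $\mathcal{A}(M)$ to be classical, all relevant quantities being continuous). Retracing \eqref{facile}, we then have the chain
\[
\mathcal{A}(M)^{n+1}=n^{n+1}V(M[n-1],K)^n\,\vol(K^*)\le n^{n+1}v_n^2\vol(M)^{n-1}\le n^{n+1}v_n^2\,V(M[n-1],K)^n\,\vol(K)^{-1},
\]
whose extreme terms differ by the factor $v_n^2/(\vol(K^*)\vol(K))\le 1+\varepsilon$. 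Hence the whole chain collapses within a factor $1+\varepsilon$, which extracts two near-equalities:
\[
\mathcal{A}(M)^{n+1}\ge (1+\varepsilon)^{-1}n^{n+1}v_n^2\vol(M)^{n-1}\quad\text{and}\quad V(M[n-1],K)^n\le (1+\varepsilon)\,\vol(M)^{n-1}\vol(K).
\]

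Next I would invoke a \emph{stability version of the affine isoperimetric inequality}: the first near-equality should force $d_{BM}(M,B_2^n)\le 1+c_n\,\varepsilon^{1/(3(n+1))}|\log\varepsilon|^{2/(3(n+1))}$. This is the heart of the matter, and is exactly where the exponents in the statement are produced. One proves it by a quantitative reworking of a proof of \eqref{affine}: either the Steiner-symmetrization proof (in the spirit of Hug), tracking at each symmetrization the deficit through a stability version of Brunn--Minkowski and balancing the number of symmetrization steps against the per-step loss --- this balancing is what generates the logarithmic factor --- or, as in \cite{BB}, by passing through a stability version of the Pr\'ekopa--Leindler inequality and the functional form of the affine isoperimetric/Blaschke--Santal\'o inequality, and then specialising. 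The power $n+1$ in the denominator enters through the exponent $n+1$ on $\mathcal{A}(M)$ and through the curvature function $h_K^{-(n+1)}$. With $M$ now known to be nearly a ball, the second near-equality, read via a stability version of Minkowski's first inequality (equivalently of Brunn--Minkowski applied to $M+tK$), shows that $K$ is close in Banach--Mazur distance to a dilate of $M$, with a deficit of at most the same order (and in fact governed by a better, square-root-type, stability estimate). Finally $d_{BM}(K,B_2^n)\le d_{BM}(K,M)\,d_{BM}(M,B_2^n)$, and taking logarithms yields the claimed bound, the exponent being dictated by the weaker of the two stability inputs, namely the affine isoperimetric one.

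The main obstacle is precisely Step~3, the stability of the affine isoperimetric inequality with the stated exponent and logarithmic correction; the normalisation, the equality-case bookkeeping in the chain above, and the final passage from ``$M$ close to a ball'' together with ``Minkowski near-equality'' to ``$K$ close to a ball'' are all comparatively routine. A secondary technical point is making sure the two stability estimates compose correctly when $M$ has only continuous positive curvature rather than $C^2_+$ boundary; this is handled by approximation together with the continuity of $\mathcal{A}$, $\vol$ and the mixed volume in the Hausdorff metric.
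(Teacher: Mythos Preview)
The paper does not give its own proof of this theorem; it merely states the result and attributes it to Ball and B\"or\"oczky \cite{BB} (improving \cite{Bor}). So there is no in-paper argument to compare against, and your proposal must be assessed on its own merits.

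Your outline is strategically sound up to the point you yourself flag: the chain \eqref{facile} does collapse under the hypothesis, yielding near-equality both in the affine isoperimetric inequality for $M$ and in Minkowski's first inequality between $M$ and $K$. The genuine gap is that Step~3 is not a reduction but a reformulation. The stability of the affine isoperimetric inequality with precisely the exponent $\tfrac{1}{3(n+1)}$ and the logarithmic correction is not an independent, off-the-shelf result one can invoke; it is established in the very papers \cite{Bor, BB} alongside, and by essentially the same methods as, the Blaschke--Santal\'o stability you are trying to prove (indeed the two inequalities are equivalent, as Section~\ref{BS} records). So appealing to it as a black box is circular. Your parenthetical sketch of how to attack it---tracking deficits through Steiner symmetrizations, or going through a stability form of Pr\'ekopa--Leindler---identifies the correct avenues, but that is where all the work lies, and until it is carried out you have restated the problem rather than solved it.

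A secondary point: the final passage from ``$M$ close to a ball'' plus ``near-equality in Minkowski's first inequality'' to ``$K$ close to $M$ in $d_{BM}$'' is less routine than you suggest. Known stability forms of Minkowski's inequality control quantities like the relative asymmetry $\vol(K\triangle(\lambda M+x))/\vol(K)$, not the Banach--Mazur distance directly; converting between these costs further dimension-dependent powers that must be absorbed into $c_n$ and reconciled with the claimed exponent.
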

If in the above theorem we assume that $K$ is symmetric, then the exponent of $\varepsilon$ can be improved to $2/3(n+1).$

\subsubsection{Local and restricted maxima} After having proved that convex bodies with maximal volume product are ellipsoids, one may ask about the local maxima of  the volume product, in the sense of Hausdorff distance. Using Theorem \ref{shadow}, it was proved in  \cite{MR4} that any local maximum is an ellipsoid, which gives another proof of Blaschke-Santal\'o's inequality.
 
 One may also investigate maxima among certain classes of bodies not containing ellipsoids. For instance, in $\R^2$, among polygons with more than $m\ge 4$ vertices, the maxima are the affine images of regular polygons with $m$ vertices \cite{MR3}. For $n\ge 3$,  the  much more complicated situation was investigated by \cite{AFZ} using shadow systems. In particular, it was proved  in \cite{AFZ} that a polytope with maximal volume product among polytopes with at most $m$ vertices is simplicial (all its facets are simplices) and has exactly $m$ vertices. It was also proved that, among polytopes with at most $n+2$ vertices, the volume product is maximized by $\conv(\Delta_{\lceil{\frac{n}{2}}\rceil},\Delta_{\lfloor{\frac{n}{2}}\rfloor})$, where $\Delta_{\lceil{\frac{n}{2}}\rceil}$ and $\Delta_{\lfloor{\frac{n}{2}}\rfloor}$ are simplices living in complementary subspaces of dimensions $\lceil{\frac{n}{2}}\rceil$ and $\lfloor{\frac{n}{2}}\rfloor$  respectively  (by definition, for $\alpha\not\in\Z$, $\lfloor{\alpha}\rfloor$ is the integer part of $\alpha$ and  $\lceil{\alpha}\rceil=\lfloor{\alpha}\rfloor+1$, for
 $\alpha\in\Z$, $\lceil{\alpha}\rceil=\lfloor{\alpha}\rfloor=\alpha$). It is conjectured in \cite{AFZ} that, for $1\le k\le n$, among polytopes with at most $n+k$
vertices, the convex hull of $k$ simplices living in complementary subspaces of dimensions $\lceil{\frac{n}{k}}\rceil$ or $\lfloor{\frac{n}{k}}\rfloor$ have maximal volume product.
 
 Among unit balls of finite dimensional Lipschitz-free spaces, which are polytopes with at most $(n+1)^2$ extreme points, some preliminary results were established in \cite{AFGZ} and it was shown that the maximizers of the volume product are simplicial  polytopes.

\subsubsection{ $L_p$-centroid inequalities} In a series of works by Lutwak, Yang and Zhang \cite{LuZ, LuYZ, LuYZ2}, Blaschke-Santal\'o's inequality appears as a special case of a family of isoperimetric inequalities involving the so called $L_p$-centroid bodies and $L_p$-projection bodies. More precisely, consider a compact star-shaped body $K$ in $\R^n$ and $p\in [1,\infty],$ the polar $L_p$-centroid body  $\Gamma^*_pK$ is defined  via it's norm:
  $$
  \|x\|_{\Gamma_p^*K}^p=\frac{1}{c_{n,p}\vol(K)}
  \int_{K}|\langle x, y \rangle|^p dy,
  $$
here the normalization constant $c_{n,p}$ is chosen so that $\Gamma^*_pB_2^n=B_2^n.$  It was proved in \cite{LuZ} that for all $p\in [1,\infty]$
\begin{equation}\label{LZ}
\vol(K)\vol(\Gamma_p^*K)\le \vol(B_2^n)^2,
\end{equation}
with equality if and only if $K$ is an ellipsoid centered at the origin. It turns out that if $K$ is a centrally symmetric convex body then $\Gamma_\infty^*K=K^*$ and thus the symmetric case of the Blaschke-Santal\'o inequality follows from (\ref{LZ}) when $p=\infty$. A stronger version of  (\ref{LZ}) was proved in \cite{LuYZ}: 
$$
\vol(\Gamma_pK)\ge \vol(K),
$$
for any star body  in $\R^n$ and $p\in [1,\infty]$. This inequality, for $p=1$, links the theory to the Busemann-Petty centroid inequality  \cite{Pe1} see also \cite{Ga,Sc}.
 If $K$ and $L$ are compact subsets of  $\R^n$, then for $p \ge 1$, it was proved in \cite[Corollary 6.3]{LuYZ2}  that for some $ c(p,n)>0$, one has
$$\int_{K\times L}
|\langle x , y\rangle |^p dx dy \ge  c(p,n)\big(\vol(K)\vol(L)\big)^{\frac{n+p}{n}}$$
with equality if and only if $K$ and $L$ are, up to sets of measure 0, dilates of polar-reciprocal, origin-centered ellipsoids.
When $p\to +\infty$, one gets 
the following  version of the symmetric Blaschke-Santal\'o's in \cite{LuYZ2}:
  If $K,L$  are compact subsets of $\R^n$, then
$$
\vol(B_2^n)^2 \max\limits_{x\in K,y \in L}
|\langle x, y\rangle|^n \ge \vol (K)\vol(L).
$$

\subsubsection{Connection to affine quermassintegrals} Affine quermassintegrals were defined by Lutwak \cite{Lu0}. For $1\le k\le n$, the $k$-th affine quermassintegral of a convex body $K$ is:
$$
\Phi_k(K)=\frac{v_n}{v_k}\left(\int_{Gr(k,n)}\vol_k(P_FK)^{-n}\sigma_{n,k}(dF)\right)^{-1/n},
$$
where $Gr(k,n)$ is the Grassmann manifold of $k$-dimensional linear subspaces $F$ of $\R^n$, $\sigma_{n,k}$ is Haar probability measure on $Gr(k,n)$ and $P_F$ is the orthogonal projection onto $F$. It was proved by Grinberg \cite{Gri} that $\Phi_k(K)$ is invariant under volume preserving affine transformations. Let $R_K>0$ satisfy $\vol(R_KB_2^n)=\vol(K)$. Lutwak  \cite{LuH} conjectured that for any convex body $K$ in $\R^n$ and any $k=1,\dots, n-1$, one has
\begin{equation}\label{eMY}
\Phi_k(K) \ge \Phi_k(R_KB_2^n)
\end{equation}
with equality if and only if $K$ is an ellipsoid. This conjecture was open for quite a long time. Lutwak proved that, for $k=1$, it follows directly  from the Blaschke-Santal\'o inequality (and that the case $k=n-1$ is connected to an inequality of Petty \cite{Pe2} \cite{Pe3}). Recently, E. Milman and Yehudayoff  \cite{MY} proved that this conjecture is true.  As one of the steps in the proof, they showed that $\Phi_k(K) \ge \Phi_k(S_HK)$, generalizing the previous result of \cite{MP1}.  In addition, a simplified proof of the Petty projection inequality was presented in \cite{MY}. Those interesting results suggest that (\ref{eMY}) can be viewed as a generalization of the Blaschke-Santal\'o inequality.

\subsubsection{A conjecture of K. Ball} Keith Ball \cite{Ba0} conjectured that if  $K$ is a convex symmetric body in $\R^n$ then
\begin{equation}\label{conjB}
\int_K\int_{K^*}\langle x,y\rangle^2dxdy \le  \int_{B_2^n}\int_{B_2^n}\langle x,y\rangle^2dxdy=\frac{n}{(n+2)^2} \vol(B_2^n)^2.
\end{equation}
and he proved a kind of reverse inequality:
$$
\frac{n \big(\vol(K)\vol(K^*)\big)^{\frac{n+2}{n}}}{(n+2)^2 \vol(B_2^n)^{\frac{4}{n}}} \le \int_K\int_{K^*}\langle x,y\rangle^2dxdy,
$$
which shows that  inequality (\ref{conjB}) is stronger than the Blaschke-Santal\'o inequality.  In \cite{Ba0, Ba1}, (\ref{conjB}) was proved for unconditional bodies. Generalizations are considered in \cite{KaSa} and \cite{Fa} (see section~\ref{Funk} for the later).

\subsubsection{Stochastic and log-concave measures extensions}\label{section:stoch}

Following the ideas initiated in \cite{PP}, the authors of \cite{CFPP} pursued a probabilistic approach of the Blaschke-Santal\'o inequality for symmetric bodies and established the following result.

\begin{thm}\label{cfpp}
For $N,n \ge 1$, let $(\Omega,\mathcal{B}, P)$ be a probability space and
\begin{itemize}
\item $X_1,\ldots, X_N:\Omega\to \R^n$ be independent random vectors, whose laws have densities with respect to Lebesgue measure which are bounded by one.
\item $Z_1,\dots, Z_N: \Omega\to \R^n$ be  independent random vectors uniformly distributed in $rB_2^n$ with $\vol(rB_2^n)=1$. 
\item  $\mu$ be the rotation invariant  measure on $\R^n$ with density $e^{\varphi(|x|)}$, $x\in \R^n$ with respect to Lebesgue measure, where $\varphi:\R_+\to\R_+$ is a non-increasing function.
\item $C_{X,N}(\omega)=\conv(\pm X_1(\omega),\ldots,\pm X_N(\omega))$ and  $C_{Z,N}(\omega)=\conv(\pm Z_1(\omega),\ldots,\pm Z_N(\omega))$ for $\omega\in \Omega$.
\end{itemize}
Then  for all $t\ge 0$, one has
$P(\{\omega\in \Omega; \mu(C_{X,N}(\omega)^*)\ge t\})\le
P(\{\omega\in \Omega; \mu(C_{Z,N}(\omega)^*)\ge t\})$.
\end{thm}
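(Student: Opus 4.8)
The plan is to reduce Theorem \ref{cfpp} to a single-step symmetrization estimate, combined with the rearrangement-type inequality for polars that we have already developed via shadow systems. The key observation is that the event $\{\mu(C_{X,N}^*)\ge t\}$ only involves the random polytope $C_{X,N}=\conv(\pm X_1,\dots,\pm X_N)$, so by Fubini the probability $P(\mu(C_{X,N}^*)\ge t)$ can be written as an integral over $(\R^n)^N$ against the product density $\prod_i g_i(x_i)$, where each $g_i$ is the density of $X_i$ and $0\le g_i\le 1$. First I would fix all coordinates except those of a single point $X_j$ and a single direction $\theta\in S^{n-1}$, and study how the function
\[
(x_1,\dots,x_N)\mapsto {\bf 1}_{\{\mu(\conv(\pm x_1,\dots,\pm x_N)^*)\ge t\}}
\]
behaves as $x_j$ moves along the line $x_j+s\theta$. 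The bodies $K_s=\conv(\pm x_1,\dots,\pm(x_j+s\theta),\dots,\pm x_N)$, together with their reflections, form (a piece of) a symmetric shadow system in the sense of Definition \ref{defshadow}: indeed one takes $M=\{\pm x_1,\dots,\pm x_N\}$ with $a(\pm x_i)=0$ for $i\neq j$ and $a(\pm x_j)=\pm 1$. Since these bodies are centrally symmetric, $s(K_s)=0$ and Theorem \ref{shadow} gives that $s\mapsto\vol(K_s^*)^{-1}$ is convex; more generally, the same argument applied with the rotation-invariant log-concave weight $\mu$ (this is exactly the generalization to measures more general than Lebesgue obtained in \cite{CFPP}, which we may invoke) shows that $s\mapsto\mu(K_s^*)^{-1}$ is convex, hence $s\mapsto\mu(K_s^*)$ is quasi-concave, so the superlevel set $\{s:\mu(K_s^*)\ge t\}$ is an interval symmetric about $0$.

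The second step is the transference from this one-dimensional geometric fact to the stochastic domination. Having fixed a direction $\theta$ and all points but $X_j$, write $x_j=y+s\theta$ with $y\in\theta^\perp$, and let $\ell(y)$ be the length of the (symmetric) interval of $s$ for which $\mu(K_s^*)\ge t$; the measure of the good $s$-set against the slice density $s\mapsto g_j(y+s\theta)$ is at most $\ell(y)$ because $g_j\le 1$ (any measurable set of a given density-weighted mass against a sub-probability density of height $\le 1$ is dominated by the symmetric interval of that Lebesgue length, and here the good set is already such an interval). On the other hand, when $X_j=Z_j$ is uniform on $rB_2^n$ with $\vol(rB_2^n)=1$, the slice density is ${\bf 1}_{rB_2^n}$ restricted to the line, which is itself the indicator of a symmetric interval, and one checks that replacing $g_j(y+s\theta)ds$ by ${\bf 1}_{rB_2^n}(y+s\theta)ds$ on each line can only increase the probability of the good event — this is the Rogers–Brascamp–Lieb–Luttinger type step: among all sub-probability densities of height $\le 1$ on a line, the symmetric rearrangement maximizes the mass captured by a fixed symmetric interval. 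Iterating over $j=1,\dots,N$ one replaces each $X_j$ by $Z_j$, and integrating over $\theta\in S^{n-1}$ (or rather carrying the argument out directly in $\R^n$ with the spherically symmetric rearrangement of each $g_j$, which is legitimate since the target indicator depends on $x_j$ through $\conv$ in a way compatible with the layer-cake/rearrangement machinery) yields the claimed inequality $P(\mu(C_{X,N}^*)\ge t)\le P(\mu(C_{Z,N}^*)\ge t)$.

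The main obstacle is making the rearrangement step rigorous in several variables simultaneously rather than one line at a time: one needs a multidimensional rearrangement inequality of Rogers–Brascamp–Lieb–Luttinger type adapted to the functional $(x_1,\dots,x_N)\mapsto{\bf 1}_{\{\mu(\conv(\pm x_i)^*)\ge t\}}$, exploiting that for fixed $\theta$ this functional is, along the $\theta$-fibers of each variable, the indicator of a symmetric interval (by the shadow-system convexity). The clean way to organize this is to first prove the single-variable replacement $X_j\rightsquigarrow Z_j$ as above, using only the one-dimensional rearrangement fact and Theorem \ref{shadow}, and then iterate; the price is that after replacing $X_1$ the remaining variables $X_2,\dots,X_N$ still have general sub-probability densities, so one must be careful that the one-variable step only uses boundedness of the density of the variable being moved and the symmetric-shadow-system structure, both of which persist at each stage. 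A secondary technical point is the extension of Theorem \ref{shadow} from Lebesgue measure to the rotation-invariant log-concave measure $\mu$; as noted in the excerpt this was carried out in \cite{CFPP}, and we cite it rather than reproving it.
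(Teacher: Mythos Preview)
Your plan—shadow systems (Campi--Gronchi, extended to the measure $\mu$ in \cite{CFPP}) for the convexity of $s\mapsto\mu(K_s^*)^{-1}$, followed by the rearrangement inequalities of Rogers and Brascamp--Lieb--Luttinger to pass from the $g_j$'s to the uniform densities on $rB_2^n$—is exactly what the paper indicates; the paper itself gives nothing beyond naming those two tools.

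The gap is in your proposed organization via single-variable replacement. You claim that for a \emph{single} moving point $x_j=y+s\theta$ (the other $x_i$ fixed), the superlevel set $\{s:\mu(K_s^*)\ge t\}$ is an interval \emph{symmetric about $0$}. Quasi-concavity gives only an interval; symmetry would require $K_{-s}$ congruent to $K_s$, but negating $s$ reflects just the pair $\pm x_j$ through $\theta^\perp$ while leaving the other $\pm x_i$ in place, so the two polytopes are not isometric in general and the interval is off-center. Worse, the one-variable replacement $X_j\rightsquigarrow Z_j$ with the remaining $X_i$ frozen simply fails: the inequality you would need, namely $\int_{E_j}g_j\le\vol(E_j\cap rB_2^n)$ for every symmetric convex $E_j\subset\R^n$ and every probability density $g_j\le1$, is false already in $\R^2$ (take $E_j$ a long thin symmetric rectangle and $g_j$ the indicator of a unit-area box sitting inside $E_j$ far from the origin).

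The fix, and what is actually done in \cite{CFPP}, is to Steiner-symmetrize \emph{all} the $g_j$ along $\theta$ simultaneously. Writing $x_j=y_j+s_j\theta$ with $y_j\in\theta^\perp$, the map $(s_1,\dots,s_N)\mapsto\mu(K_{s_1,\dots,s_N}^*)$ is quasi-concave (shadow systems along every line in $\R^N$) and is genuinely \emph{even}, because negating all $s_j$ at once reflects the whole polytope through $\theta^\perp$ and $\mu$ is rotation invariant. Its superlevel set in $\R^N$ is therefore a centrally symmetric convex body, and the product-form Rogers/BLL inequality applies to show that replacing every slice $s\mapsto g_j(y_j+s\theta)$ by its even rearrangement increases the probability. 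Iterating over directions sends each $g_j$ to its spherical rearrangement $g_j^*$; the final passage from $g_j^*$ to ${\bf 1}_{rB_2^n}$ uses only $g_j^*\le1$ together with the radial monotonicity of the event (enlarging any $|x_j|$ enlarges the polytope, shrinks its polar, and decreases $\mu$ of the polar).
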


It follows of course that the same comparison holds in expectation.  The tools used there are shadow systems as in the work of Campi and
Gronchi~\cite{CG1}, together with the rearrangement inequalities of Rogers \cite{R} and Brascamp-Lieb-Luttinger \cite{BLL}.
Applying Theorem \ref{cfpp} to $X_1, \dots, X_N$  uniformly distributed  on  a convex body $K$ and using  that when  $N\to +\infty$, the sequence of random polytopes $
P_{K,N}:=\conv(\pm X_1,\ldots, \pm X_N)$ 
converges almost surely to $K$ in the Hausdorff metric, we deduce that for  measures $\mu$, as in theorem \ref{cfpp}, one has 
\[
\mu(K^*)\le\mu((R_KB_2^n)^*)=\mu\left(\frac{B_2^n}{R_K}\right), \quad\hbox{where $R_K=\left(\frac{\vol(K)}{\vol(B_2^n)}\right)^\frac{1}{n}$}.
\]
Since clearly $\mu(K)\le\mu(R_K B_2^n)$, we deduce that $\mu(K)\mu(K^*)\le \mu(R_K B_2^n)\mu(B_2^n/R_K)$.
If, moreover, $t\mapsto \varphi(e^t)$ is concave, then $t\mapsto\mu(e^tB_2^n)$ is also log-concave (see \cite{CFM}). Thus, it follows that for such measures $\mu$ and for any symmetric convex body $K$, one has
\begin{equation}\label{bs-for-mu}
\mu(K)\mu(K^*)\le\mu(B_2^n)^2.
\end{equation}
It was proved in \cite{CR} that under those hypotheses,  $t\mapsto \mu(e^tK)$ is log-concave (extending the same property for Gaussian measures established in \cite{CFM}). 
It  was asked in \cite{Co} whether \eqref{bs-for-mu}  holds for all symmetric log-concave measures $\mu$. 

We shall prove \eqref{bs-for-mu}  when moreover
$\mu$ has an unconditional density $f$ with respect to the Lebesgue measure (a function $f:\R^n\to\R$
 is  said {\it unconditional} if for some basis $e_1,\dots, e_n$ of $\R^n$, one has for all $(\varepsilon_1,\dots, \varepsilon_n)\in\{-1;1\}^n$ and $(x_1,\dots,x_n)\in\R^n$, $f(\sum_{i=1}^n x_ie_i)=f(\sum_{i=1}^n \varepsilon_ix_ie_i)$).
\begin{thm}
If $\mu$ a measure on $\R^n$ with  an unconditional and log-concave density with respect to the Lebesgue measure and $K$ is a symmetric convex body in $\R^n$, then  $\mu(K)\mu(K^*)\le\mu(B_2^n)^2.$
\end{thm}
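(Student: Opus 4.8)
The plan is to reduce the claim, by a symmetrization argument, to the unconditional case for the body $K$, and then handle the unconditional case by a Fubini-type induction on the dimension $n$. Since $\mu$ has an unconditional density $f$ with respect to a fixed basis $e_1,\dots,e_n$, it is natural to first replace $K$ by an unconditional body without decreasing $\mu(K)\mu(K^*)$. Concretely, I would perform successive Steiner symmetrizations of $K$ with respect to the coordinate hyperplanes $e_i^\perp$: each such symmetrization $S_{e_i^\perp}$ preserves the Lebesgue-measure structure needed to control both $\mu(K)$ and $\mu(K^*)$, because the density $f$ is even in the $i$-th coordinate. Using the shadow-system description of Steiner symmetrization from Remark~\ref{rk:steiner-shadow}, together with the fact that for an unconditional weight the one-dimensional sections behave monotonically under Minkowski averaging (as in Step~1 of the Meyer--Pajor argument), one shows $\mu(S_{e_i^\perp}K)\ge\mu(K)$ and, dually, $\mu\big((S_{e_i^\perp}K)^*\big)\ge\mu(K^*)$ — the second inequality via the inclusion \eqref{steiner-inclusion} and the fact that polarity turns coordinate-Steiner symmetrization of $K$ into coordinate-Steiner symmetrization of $K^*$. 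Iterating over $i=1,\dots,n$ and passing to the limit produces an unconditional body, so it suffices to prove the theorem when $K$ itself is unconditional (and still symmetric).

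For the unconditional case I would argue by induction on $n$, the case $n=1$ being an elementary one-variable computation (an interval $[-a,a]$ has polar $[-1/a,1/a]$, and $\mu(K)\mu(K^*)\le\mu(\R)^2$-type bounds reduce to log-concavity of $t\mapsto\mu(e^t[-1,1])$, which holds by hypothesis). For the inductive step, write $\R^n=\R^{n-1}\times\R$ with the last coordinate split off, and slice: for an unconditional convex body $K$, the section $K\cap(\R^{n-1}\times\{s\})$ is an unconditional convex body in $\R^{n-1}$, shrinking in $s\ge 0$, and by the identification recalled in the introduction, $(K\cap e_n^\perp)^*$ inside $e_n^\perp$ equals $P_{e_n^\perp}(K^*)$. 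The strategy is to write $\mu(K)$ and $\mu(K^*)$ as iterated integrals over the last coordinate, apply the $(n-1)$-dimensional inductive hypothesis to the slices of $K$ (with the induced unconditional log-concave measure on $\R^{n-1}$), and then close the argument with a one-dimensional log-concavity/Prékopa-type inequality in the remaining variable. The function $s\mapsto\mu_{n-1}\big((K\cap(\R^{n-1}\times\{s\}))\big)$ will be log-concave by Prékopa's theorem applied to the log-concave density $f$, and a dual statement controls the corresponding integral for $K^*$; combining these two one-dimensional facts with the inductive bound on the slice volume products yields $\mu(K)\mu(K^*)\le\mu(B_2^n)^2$ after optimizing, using that the extremal profile is exactly the one realized by the Euclidean ball.

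The main obstacle I expect is the bookkeeping in the inductive step: one must match the slicing of $K$ against the \emph{projection} description of $K^*$ (sections of $K$ correspond to projections of $K^*$ and vice versa), and carry the correct induced measure on both the $(n-1)$-dimensional slice and the one remaining coordinate so that the two Prékopa-type log-concavity statements and the inductive hypothesis assemble into the sharp constant $\mu(B_2^n)^2$ rather than something lossy. In particular, identifying that the worst case over all the one-dimensional optimizations is attained simultaneously by the Euclidean ball — so that no constant is lost at any stage — is the delicate point; this is where unconditionality is used essentially, since it forces the relevant section and projection functions to be genuinely one-variable log-concave objects to which Prékopa's inequality applies cleanly. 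The symmetrization reduction in the first paragraph is comparatively routine, relying only on tools already developed in Section~\ref{steiner-mp} and Remark~\ref{rk:steiner-shadow}, adapted to carry the unconditional weight $f$ through the one-dimensional Minkowski averaging.
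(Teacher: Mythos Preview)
Your symmetrization reduction is correct and matches the paper's proof exactly: one applies $n$ Steiner symmetrizations with respect to the coordinate hyperplanes (no limit is needed --- after these $n$ steps the body is already unconditional), using that the unconditional log-concave density is even and unimodal in each coordinate to get $\mu(S_{e_i^\perp}K)\ge\mu(K)$, and using the inclusion \eqref{steiner-inclusion} together with Borell's inequality $\nu\big(\tfrac12(A-A)\big)\ge\sqrt{\nu(A)\nu(-A)}=\nu(A)$ for the even log-concave measure $\nu=f(\cdot,t)\,dy'$ to get $\mu((S_{e_i^\perp}K)^*)\ge\mu(K^*)$. (Minor slip: polarity does not literally turn $S_HK$ into $S_H(K^*)$; what you actually use is \eqref{steiner-inclusion}.)

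The divergence, and the gap, is in the unconditional case. The paper does \emph{not} induct on the dimension: it invokes \cite{FM1}, where the inequality $\mu(L)\mu(L^*)\le\mu(B_2^n)^2$ for unconditional $L$ and unconditional log-concave $\mu$ is obtained by a single application of Pr\'ekopa--Leindler after the logarithmic change of variables $x_i=e^{u_i}$ on $\R_+^n$. The point is that for $x\in L_+$ and $y\in L^*_+$ one has $\sum x_iy_i\le1$, hence $(\sqrt{x_iy_i})_i\in(B_2^n)_+$; in the $u$-coordinates this becomes the midpoint inclusion $\tfrac12(\log L_+ +\log L^*_+)\subset\log(B_2^n)_+$, and the transformed density $g(u)=f(e^{u_1},\dots,e^{u_n})e^{\sum u_i}$ is log-concave because $-\log f$ is convex and, by unconditionality, coordinatewise non-decreasing on $\R_+^n$. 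Pr\'ekopa--Leindler then gives the sharp inequality with no loss of constants.

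Your inductive scheme, by contrast, is not shown to close. The difficulty you flag is real: sections of $K$ pair with \emph{projections} of $K^*$, not sections, so the $(n-1)$-dimensional hypothesis does not apply to a matching pair of slices, and there is no evident one-dimensional inequality that reassembles the pieces into $\mu(B_2^n)^2$ without loss. Your base case already secretly uses the log change of variables (log-concavity of $t\mapsto\mu(e^t[-1,1])$ is exactly the one-dimensional instance of the argument above); the clean way forward is to do this globally rather than to induct.
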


\begin{proof}  We apply  first a linear transform making the density of $\mu$ unconditional with respect to the canonical basis of $\R^n$. 
Let $H$ be a coordinate hyperplane and let $S_HK$ be the Steiner symmetral of $K$ with respect to $H$. Using \eqref{steiner-inclusion} as in the proof of Meyer-Pajor \cite{MP1}  (see section \ref{steiner-mp} above), we get  $\mu(K^*)\le\mu((S_HK)^*)$. Moreover, it is easy to see that $\mu(K)\le\mu(S_HK)$. Thus, denoting by $L$ the convex body obtained from $K$ after $n$ successive Steiner symmetrisation with respect to the coordinate hyperplanes, we get $\mu(K)\mu(K^*)\le\mu(L)\mu(L^*)$. We are now reduced to the case when $\mu$ and $K$ are unconditional.  Using the classical Prékopa-Leindler inequality (see for example \cite[page 3]{Pi}), it was shown in \cite{FM1} that then $\mu(L)\mu(L^*)\le\mu(B_2^n)^2$.
\end{proof}
\subsubsection{Blaschke-Santal\'o type inequality on the sphere}

Another inequality of Blaschke-Santal\'o type was  established by Gao, Hug and Schneider \cite{GHS} on the sphere. We define the polar of  $A\subset S^{n-1}$ by
\[
A^\circ:=\{y\in S^{n-1}; \langle x,y\rangle\le 0, \mbox{ for all } x\in A\}.
\]
If $\pos(A):=\{t x; x\in A,\ t\ge0\}$, then $A^\circ=(\pos(A))^*\cap S^{n-1}$. Let $\sigma$ be the Haar probability measure on $S^{n-1}$. A {\it spherical cap} is the non-empty intersection of $S^{n-1}$ with a halfspace.   This work was further  generalized by Hu and Li \cite{HuLi} who proved a number of Blaschke-Santal\'o type
inequalities in the sphere and hyperbolic space.

\begin{thm}\cite{GHS}
Let $A$ be a non-empty measurable subset of $S^{n-1}$ and $C$ be a spherical cap such that $\sigma(A)=\sigma(C)$. Then $\sigma(A^\circ)\le\sigma(C^\circ)$. If moreover $A$ is closed and $\sigma(A)<1/2$, there is equality if and only if $A$ is a spherical cap. 
\end{thm}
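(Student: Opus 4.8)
The plan is to imitate the Steiner-symmetrization proof of the Euclidean Blaschke--Santal\'o inequality, replacing Steiner symmetrization by two-point symmetrization (polarization) on the sphere. Fix a hyperplane $H\ni 0$ with unit normal $v$, let $\rho_H$ be the reflection in $H$, and set $H^+=\{x:\langle x,v\rangle\ge0\}$, $H^-=\{x:\langle x,v\rangle\le0\}$. For $A\subseteq S^{n-1}$ define the polarization $P_H^+A$ pushing mass into $H^+$ by: $x\in P_H^+A$ iff $x\in H^+$ and ($x\in A$ or $\rho_Hx\in A$), or $x\in H^-$ and ($x\in A$ and $\rho_Hx\in A$); define $P_H^-$ by exchanging $H^+$ and $H^-$. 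Both preserve $\sigma$, since they only permute the pairs $\{x,\rho_Hx\}$. The key lemma I would establish is the compatibility of polarization with spherical polarity when $A$ and $A^\circ$ are pushed to \emph{opposite} hemispheres:
$$P_H^-(A^\circ)\ \subseteq\ (P_H^+A)^\circ.$$
This is a finite case check: for $x\in P_H^+A$ and $y\in P_H^-(A^\circ)$, write $x=x_0+sv$, $y=y_0-tv$ with $x_0,y_0\in H$, $s,t\ge0$, use $\langle\rho_Hx,y\rangle=\langle x,\rho_Hy\rangle$, and distinguish cases according to which of $x,\rho_Hx\in A$ and $y,\rho_Hy\in A^\circ$ hold; in each case $\langle x,y\rangle\le0$. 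Together with $\sigma(P_H^-(A^\circ))=\sigma(A^\circ)$ this shows that $A\mapsto P_H^+A$ preserves $\sigma(A)$ and does not decrease $\sigma(A^\circ)$.

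Let $C$ be the spherical cap with $\sigma(C)=\sigma(A)$ and pole $p$. By the standard convergence results for polarization on $S^{n-1}$, one may pick hyperplanes $H_1,H_2,\dots$ through $0$, each having $p$ in its favoured hemisphere, such that $A_k:=P_{H_k}^+\cdots P_{H_1}^+A\to C$ in measure. By the key lemma, $\sigma(A_k^\circ)$ is nondecreasing in $k$ and $\ge\sigma(A^\circ)$. Passing to a subsequence along which ${\bf 1}_{A_k}\to{\bf 1}_C$ a.e., one checks that if $y$ lies in $A_k^\circ$ for infinitely many $k$ then $\langle x,y\rangle\le0$ for a.e.\ $x\in C$, hence $y\in C^\circ$ since $C$ has nonempty interior; a reverse Fatou argument then yields $\limsup_k\sigma(A_k^\circ)\le\sigma(C^\circ)$. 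Combining, $\sigma(A^\circ)\le\sigma(C^\circ)$.

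For the equality case, assume $A$ is closed, $\sigma(A)<1/2$, and $\sigma(A^\circ)=\sigma(C^\circ)$. Then the nondecreasing sequence $\sigma(A_k^\circ)$ is constant, so the inclusion in the key lemma holds with equality of measures at every stage. Since $(P_H^+A)^\circ$ is always a closed, spherically convex polar cone whereas $P_H^-(A^\circ)$ need not be, equality of measures forces $P_H^-(A^\circ)$ to coincide up to a null set with its spherical convex hull, for every admissible $H$; invoking the rigidity statement that a closed set stable under all such polarizations is a spherical cap, $A^\circ$ is a cap. Then $\widehat A:=(A^\circ)^\circ$ is the polar cap of $A^\circ$, so $\sigma(\widehat A)=\sigma(C)=\sigma(A)$; since $A\subseteq\widehat A$ and $A$ is closed, $A=\widehat A$ is a spherical cap.

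The main obstacle is twofold. First, one must guess the correct form of the key lemma -- polarization is compatible with polarity only if $A$ and $A^\circ$ are reflected toward opposite sides of $H$ -- after which that step is mechanical. Second, and more seriously, the equality case requires the rigidity of spherical polarization (stable closed sets are caps) together with careful bookkeeping of spherical convex hulls, and it is precisely here that the hypothesis $\sigma(A)<1/2$ is used, so that $C^\circ$ is a genuine cap and the pole $p$ can be used consistently throughout. An alternative route would be to apply the gnomonic projection from a well-chosen pole, turning the statement into a Blaschke--Santal\'o inequality for the rotation-invariant weight $(1+|x|^2)^{-n/2}$ on $\R^{n-1}$ and bringing it within reach of the shadow-system methods of Section~\ref{secshadow}; the delicate point there is the choice of pole, which plays the role of the Santal\'o point.
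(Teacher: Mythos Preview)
Your approach is the one the paper attributes to \cite{GHS}: two-point symmetrization for the inequality and the rigidity results of \cite{AF} for the equality case. Your key lemma $P_H^-(A^\circ)\subseteq(P_H^+A)^\circ$ is correct (the case check goes through), and the reverse-Fatou passage to the limit is sound once one notes that $C$ is the closure of its interior.

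The equality argument, however, is where you are too quick. From equality of measures you get $P_H^-(A^\circ)=(P_H^+A)^\circ$ up to a null set, and since both sides are closed with nonempty interior (here $\sigma(A)<1/2$ is used so that $\sigma(A^\circ)>0$) this is an actual equality; hence $P_H^-(A^\circ)$ is spherically convex for every admissible $H$. But you then write ``invoking the rigidity statement that a closed set stable under all such polarizations is a spherical cap'' --- yet you have \emph{not} shown $A^\circ$ is stable, only that its polarizations remain convex. The bridge you are missing is precisely the main theorem of \cite{AF}: for a closed (spherically) convex set $B$ and any reflection hyperplane $H$, the polarization $P_H^-B$ is again convex \emph{only if} $P_H^-B\in\{B,\rho_HB\}$. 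Applying this to $B=A^\circ$ and every $H$ with $-p\in H^-$ yields that each polarization of $A^\circ$ towards $-p$ is trivial, and a second (easy) part of \cite{AF} then says that a closed set with this property for all such $H$ is a cap centered at $-p$. Once $A^\circ$ is a cap, your final step $A=(A^\circ)^\circ$ via the measure equality is fine. So the gap is not in the strategy but in invoking only half of \cite{AF}; you need both the ``convex polarization is trivial'' statement and the ``trivial for all $H$ implies cap'' statement.
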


Two proofs were given in \cite{GHS}. One of them  uses a special type of symmetrization called the two-point symmetrization and  for the equality case the results of \cite{AF}. Hack and Pivovarov  \cite{HP} gave a stochastic extension of  theorem 7 in the spirit of Theorem \ref{cfpp}.

\section{Mahler conjecture. Special cases}\label{MC}

The problem of the lower bound of $\VP(K)$ is not yet solved, although significant progresses were done these last years.
The first results are due to Mahler for  $n=2$, who proved that $\VP(K)\ge \VP(\Delta_2)=\frac{27}{4}$ where $\Delta_2$ is a triangle and in the centrally symmetric case that $\VP(K)\ge \VP([-1,1]^2)=\frac{8}{3}$ (see also \cite{To}). For the proofs, he used polygons and could not thus give the case of equality.  Observe that he continued to be interested in this problem \cite{Ma3,Ma4}.
 The case of equality in  dimension $2$ was obtained by Meyer \cite{Me2} for general bodies and by Reisner  \cite{Re1} (see also \cite {SR1, Me1, To}) for centrally symmetric bodies. What happens in dimension $n\ge 3$? There are  two conjectures, the first one formulated explicitly by Mahler \cite{Ma1}, but not the second one.

\begin{conj}\label{mahler}   For every convex body  $K$ in  $\R^n$, one has 
 $$\VP(K)\ge \VP(\Delta_n)=\frac{(n+1)^{n+1}}{(n!)^2},$$ where $\Delta_n$ is a simplex in $\R^n$, with equality if and only if $K=\Delta_n$.
\end{conj}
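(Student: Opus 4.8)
The plan is to proceed by induction on the dimension $n$, using shadow systems to reduce the problem to extremal configurations and the low-dimensional cases as base steps. By the affine invariance of $\VP$, one may normalize $K$ so that its Santal\'o point is the origin and $K$ is in a convenient position (e.g. John position). The idea is then to find, for a non-simplex $K$, a one-parameter shadow system $(K_t)$ with $K_0=K$ along which the volume product does not increase, terminating at a body that is either lower dimensional, so that induction applies after a limiting argument of the kind used in the passage from random polytopes to $K$ behind Theorem \ref{cfpp}, or else has strictly more structure (more faces degenerating, fewer vertices) until one reaches a simplex.

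Concretely, for the first reduction I would slice $K$ by a hyperplane $H$ through the origin and try to bound $\VP(K)$ from below by the product of the volume product of the section $K\cap H$ inside $H$ with a one-dimensional factor coming from the height function in the direction $H^\perp$; the polarity identity $(K\cap H)^*=P_H(K^*)$ recorded in Section \ref{sec1980} is precisely what makes such a Fubini-type splitting conceivable. For the second reduction, the monotonicity of $t\mapsto\vol\big((K_t)^{s(K_t)}\big)^{-1}$ from Theorem \ref{shadow} is the tool that lets one push the minimum of $\VP$ along $(K_t)$ to the endpoints of the parameter interval; iterating suitably chosen such moves, one hopes to collapse $K$ onto the simplex while proving the bound decreases at each step.

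This strategy has in fact been carried out in several special cases, which I would treat first both as evidence and as base cases: the planar case $n=2$, where Mahler's bound follows by reducing to polygons and then to triangles, with equality settled by Meyer and by Reisner; the class of unconditional bodies, where an induction on coordinate hyperplane sections together with a Pr\'ekopa--Leindler / Saint-Raymond type integral inequality yields $\VP(K)\ge 4^n/n!$; the class of zonoids, treated by Reisner and by Gordon and Reisner; and, most recently, centrally symmetric bodies in $\R^3$. In each of these the inductive slicing above closes because the relevant extremizers are completely understood.

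The hard part -- and the reason the full Conjecture \ref{mahler} remains open -- is that no symmetrization or shadow move is known which decreases the volume product \emph{monotonically all the way down to a simplex}, in sharp contrast with the Steiner symmetrization available for the upper bound (Remark \ref{rk:steiner-shadow}). Two structural obstacles are responsible. First, the conjectured minimizers are far from unique: already in the symmetric case the whole family of Hanner polytopes, built from $\ell_1$- and $\ell_\infty$-sums, is conjectured to be extremal, so any purely variational or local-minimality argument must contend with a large orbit of candidates and cannot single out one body. Second, the extreme points of a shadow system of convex bodies in dimension $n\ge 3$ form a complicated family (simplicial polytopes with many vertices), so ``pushing to the boundary of the parameter space'' does not by itself terminate at a simplex; one would need an additional mechanism -- perhaps a clever choice of the function $a$ defining the shadow system, or an entirely different decomposition of the pair $(K,K^*)$ into dual pairs of simplices -- to force the collapse. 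Producing such a mechanism in all dimensions is the crux of the problem.
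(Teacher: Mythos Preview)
The statement you were asked to prove is labeled \textbf{Conjecture} in the paper, not Theorem, and the paper does not contain a proof of it: Mahler's conjecture for general convex bodies is open in every dimension $n\ge 3$. So there is no ``paper's own proof'' to compare against. Your write-up implicitly acknowledges this --- the final two paragraphs are an accurate summary of why the outlined strategy does not close --- but the first two paragraphs are phrased as if a proof is forthcoming, and they are not.

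To be concrete about where the plan breaks: the proposed ``Fubini-type splitting'' via $(K\cap H)^*=P_H(K^*)$ does not yield a clean lower bound of the form $\VP(K)\ge c_n\,\VP(K\cap H)$ with the right constant $c_n$ in the non-symmetric case; no such inequality is known, and indeed the Santal\'o point of $K$ need not lie in $H$, so even the polarity identity is not directly applicable. Likewise, the shadow-system reduction you describe works beautifully in the plane (this is exactly the proof the paper sketches in Section~\ref{MC}) but, as you yourself note and as the paper states explicitly in the Remark following that proof, moving a single vertex in dimension $n\ge 3$ typically destroys the combinatorial structure of the polytope, and there is no known way to iterate such moves down to a simplex. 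Your list of settled special cases (planar, zonoids, unconditional, symmetric $3$-dimensional) matches the paper's, and your diagnosis of the two structural obstacles is correct. But the honest conclusion is simply that this is an open problem; what you have written is a survey of approaches and obstructions, not a proof.
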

\begin{conj}\label{conjcube}  For every centrally symmetric convex body $K$ in dimension $n$, one has $$\VP(K)\ge \VP(B_\infty^n)=\frac{4^n}{n!},$$ 
 where $B_\infty^n= [-1,1]^n$ is a cube, with equality if and only if $K$ is a Hanner polytope (see Definition \ref{hanner} below).
 \end{conj}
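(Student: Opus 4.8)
The plan is to attack Conjecture \ref{conjcube} — which is the symmetric case of the Mahler conjecture and remains open in general — by combining a symmetrization / shadow-system reduction with a structural analysis of the conjectured minimizers. First I would exploit Theorem \ref{shadow}: since $t\mapsto\vol((K_t)^{s(K_t)})^{-1}$ is convex along any shadow system, the functional $\VP$ cannot attain a strict interior minimum along such a family unless it is affine there, and when it is affine Proposition~6.1 of \cite{MY} (and its predecessor in \cite{MR3}) pins down the $K_t$ up to special affine maps. Running every admissible shadow move on a putative minimizer should therefore force it to be a polytope whose vertices and facets are "rigid," and one hopes this rigidity forces the combinatorial type of a Hanner polytope. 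So the first step is: reduce to polytopes, bound the number of vertices/facets, and extract from the rigidity a strong symmetry (ideally, invariance under reflections in a full flag of coordinate hyperplanes).

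The second step is an induction on the dimension, seeded by the known cases $n\le 3$. The key structural identity is the section/projection duality from Section \ref{sec1980}: for a coordinate subspace $E$, one has $(K\cap E)^* = P_E(K^*)$, hence $\VP(K\cap E)=\vol_E(K\cap E)\,\vol_E(P_E K^*)$. Combining a Fubini decomposition of $\vol(K)$ and $\vol(K^*)$ along a splitting $\R^n=E\oplus E^{\perp}$ with this identity, the goal is to bound $\VP(K)$ below by a product of lower-dimensional volume products, with equality governed exactly by the $\ell_1/\ell_\infty$ direct-sum structure that characterizes Hanner polytopes (a Hanner body is built from segments by repeated $\ell_1$- and $\ell_\infty$-sums, and the two operations are exchanged by polarity). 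This is precisely the mechanism behind the unconditional case (the Saint-Raymond / Meyer / \cite{FM1} argument), and the program is to replace the hypothesis "unconditional" by the weaker hyperplane-symmetry extracted in Step 1.

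For whatever is not reachable this way, I would fall back on the reformulations surveyed in Section \ref{linsec} and Section \ref{AE}: the bound $\VP(K)\ge 4^n/n!$ for symmetric $K$ is equivalent to the sharp case of Viterbo's capacity inequality for the Lagrangian product $K\times K^{*}$, so one could try to produce the required closed characteristic / periodic billiard trajectory on $\partial(K\times K^{*})$ directly; alternatively, Nazarov's complex-analytic method, which already yields the correct order of magnitude via a Bergman-type reproducing kernel on the tube domain $\R^n+iK$, might conceivably be sharpened to the exact constant by a better choice of kernel.

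The main obstacle — and the reason the conjecture is still open — is exactly the gap between the two ends of this program. The shadow-system machinery cleanly delivers \emph{local} minimality and the direct-sum (unconditional / Hanner) picture, but there is no known global variational principle forcing an arbitrary symmetric body to degenerate toward a Hanner polytope, and, unlike the Blaschke–Santaló upper bound, no symmetrization monotonically \emph{decreases} $\VP$. Bridging that gap — most plausibly through the symplectic/systolic reformulation, or through a genuinely new rigidity input that shows a minimizer must split as a coordinate direct sum — is where the real difficulty lies; absent it, the dimensional induction cannot be closed, which is essentially the whole problem.
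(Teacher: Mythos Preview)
The statement you were asked to prove is Conjecture~\ref{conjcube}, the symmetric Mahler conjecture, and the paper does \emph{not} contain a proof of it: it is explicitly presented as open, with only partial cases established (planar bodies, zonoids, unconditional bodies, and the $3$-dimensional case). So there is no ``paper's own proof'' to compare against.

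What you have written is not a proof but a research outline, and you say so yourself in the final paragraph. Your plan is reasonable as a survey of available tools --- shadow systems and the convexity of $t\mapsto\vol((K_t)^{s(K_t)})^{-1}$, section/projection duality, the unconditional argument, the symplectic and complex-analytic reformulations --- and you correctly identify the central obstruction: no known symmetrization drives $\VP$ \emph{down}, so there is no global mechanism forcing an arbitrary symmetric body toward a Hanner polytope. That diagnosis matches the state of the art described in the paper. But none of your steps closes: Step~1 (shadow-system rigidity forcing a polytope with Hanner combinatorics) is a hope, not an argument --- the rigidity in \cite{MR3,MY} constrains equality cases along a \emph{single} shadow path, and running ``every admissible shadow move'' does not obviously terminate or yield the desired symmetry; Step~2 presupposes the hyperplane symmetries you could not extract in Step~1; and the fallbacks in your third paragraph (Viterbo, Nazarov) are themselves open problems at the sharp constant.

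In short: you have accurately summarized why the conjecture is hard and which partial techniques exist, but you have not produced a proof, and neither has the paper.
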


\subsection{The conjectured  minimum in the symmetric case is not unique}  To understand conjecture \ref{conjcube} and different phenomena related to it, we define  Hanner polytopes \cite{Ha}, and first the $\ell_1$-sum $E\oplus_1 F$ and $\ell_{\infty}$-sum $E\oplus_{\infty} F$ of two normed spaces $E$ and $F$.

\begin{deff}
Let $(E, \|\cdot\|_E)$ and $(F,\|\cdot\|_F)$ be two normed spaces. Then on $E\times F$, we define two norms:
 the norm of  the $\ell_{\infty}$-sum $E\oplus_{\infty} F$ of $E$ and $F$ and of their $\ell_1$-sum  $E\oplus_1 F$ by
\begin{itemize}
\item $\| (x,y) \|_{\infty}=\max(\| x \|_E, \| y \|_F)$.
\item $ \|(x,y) \|_{1}= \| x \|_E + \| y \|_F$.
 \end{itemize}
\end{deff}
We note that if $E$ and $F$  are normed spaces then the unit ball of their $\ell_{\infty}$-sum is the Minkowki sum of the unit balls of $E$ and $F$ in $E\times F$ and the unit ball of their $\ell_{1}$-sum is their convex hull.  Analogously, if we consider two convex bodies $K\subset \R^{n_1}$ and $L\subset \R^{n_2}$, we  define two convex bodies in $\R^{n_1+n_2}$:
\begin{itemize}
\item $K\oplus_\infty L=K\times\{0\}+\{0\}\times L=\{x_1+x_2: x_1 \in K, x_2\in L\}$, their $\ell_\infty$-sum.
\item  $K \oplus_1 L=\conv(K\times\{0\},\{0\}\times L)$, their $\ell_1$-sum.
\end{itemize}
One major property of $\ell_{1}$ and $\ell_{\infty}$-sums is that
\begin{equation}\label{oneinf}
(K\oplus_\infty L)^*=K^*\oplus_1 L^*.
\end{equation}

Now we are ready to define Hanner polytopes. 

\begin{deff}\label{hanner}  In dimension $1$, Hanner polytopes are  symmetric segments. Suppose that Hanner polytopes are defined in all dimension $m\le n-1$. A   Hanner 
polytope in dimension $n$ is the unit ball of an $n$-dimensional normed space $H$ such that for some  $k$-dimensional  subspace $E$,  
$1\le k\le n$, and 
$(n-k)$-dimensional subspace $F$ of $H$, whose unit balls are Hanner polytopes, $1\le k\le n-1$,  $H$ is the $\ell_{\infty}$-sum or the $\ell_1$-sum of $E$ and $F$. 
\end{deff}
Let us now discuss the basic properties of Hanner polytopes: 
\begin{itemize}
\item In $\R^2$, there is a unique (up to isomorphism) Hanner polytope, which is the square. 
\item In $\R^3$, there
 are exactly $2$ (up to isomorphism) Hanner polytopes, which are the cube
and the centrally symmetric octahedron.
\item In  $\R^4$, there are, up two isomorphism, $4$ different classes of Hanner polytopes, including two which are not isomorphic to the cube or  the crosspolytope. And in $\R^n$,  their number increases quickly with $n$.
\item 
The normed spaces  whose unit balls $K$ are Hanner polytopes are up to isometry exactly those which satisfy the $3-2$-intersection property: for any three vectors $u_1, u_2$ and $u_3$ if  $(K+u_i)\cap (K+u_j) \not=\emptyset,$ for all $1 \le i<j \le 3$, then  the intersection of all  $3$ balls is not empty \cite{HL}.
\item  A  Hanner polytope is unconditional (see Definition \ref{uncond} below). 
\item  If $K$ is a Hanner polytope, then  so is $K^*$. This follows from (\ref{oneinf}).

\item If  $K\subset \R^{n_1}$ and $L\subset \R^{n_2}$   are two convex bodies, then
$$\VP(K\oplus_{\infty} L)= \VP(K\oplus_1 L)=\frac{n_1!n_2!}{ (n_1+n_2)! }\VP(K)\VP(L).$$

\item Using induction, it follows  that the volume product of a Hanner polytope in $\R^n$ is $\frac{4^n}{n!}$.
 \end{itemize}
 
  In some sense, Conjecture 1 seems easier than Conjecture 2 because up to an isomorphism, there is only one proposed minimum. But polarity is done with respect to the Santal\'o point of a convex body $K$, which is not always well located, so that one has to prove that for every $z\in \inte(K)$, $\vol(K)\vol(K^z)\ge \VP(\Delta_n)$.  Observe  however that if $K$  has minimal volume product among all other convex bodies, then its Santal\'o point is also its center of gravity.

 \subsection{The planar case} First, note that the conjecture holds  with the case of equality for $n=2$ (Mahler \cite{Ma1}, Meyer\cite{Me2} for another proof and the case of equality). Let us sketch a proof of the planar case and use this opportunity to give an example of how the method of shadow systems as well as Theorem \ref{shadow} can be used;  note that the method in this case can be traced back to the original proof from \cite{Ma1} and is almost identical for the general and the symmetric case.  We concentrate on the general case.
 
 \begin{proof}(Lower bound in $\R^2$) It is enough to show that $\VP(T)\ge \VP(\Delta_2)$ for all    convex polygons $T \subset \R^2$.
  The main idea  is  to remove vertices of $T$.  We use induction on the number  $k$ of vertices.   Let $T$ be a polygon with $k\ge 4$ vertices. Suppose that
$T=\conv(v_1,v_2,v_3,\dots,v_k)$, with $v_1,v_2,v_3,...,v_k,$ written in the clockwise order. 
We shall prove that $\VP(T)\ge \VP(Q)$, for a polygon $Q$ with only $k-1$ vertices.  For $i\not= j$, let $\ell_{i,j}$ be a line through $v_i$ and $v_j$.  Let $\theta \in S^1$ be  parallel to  the  line $\ell_{1,k-1}$. And define $T_t=\conv(v_1,v_2,\dots,v_{k-1},v_k+t\theta)$ (i.e. we move $v_k$ on a line parallel to $\ell_{1,k-1}$).
The line $\{v_k+t\theta; t\in \R\}$ meets $\ell_{k-1,k}$ at $v'_k$  when $t=a$ and $\ell_{1,2}$ at  $v'_1$ when $t=b$.  Since $T_0=T$, one may assume that $a<0<b$. It is easy to see that, for $t\in [a,b]$, $t\mapsto T_t$ is a shadow system with $\vol(T_t)=\vol(T)$. By Theorem \ref{shadow},  $t\mapsto \VP(T_t)^{-1}$ is convex on the interval $[a,b]$ and thus is maximal at its end points. Thus $\VP(T)\ge \min(\VP(T_a),\VP(T_b))$ where
$T_a=\conv(v_1,  \dots,v_{k-2}, v'_k)$ and $T_b= \conv(v'_1,v_2,\dots, v_{k-1})$ are polygons with only $k-1$ vertices.
\end{proof}

\begin{remark} The above method was used to prove a number of partial cases of Mahler's conjectures (see \cite{MR2, FMZ,AFZ, AFGZ, Sar}). Unfortunately, there seems to be no way to generalize this approach to dimension $3$ and higher, one of the reason is that if a vertex $v$ of  a polytope $P$ may be a vertex of  a lot of non simplicial faces, and how  "moving" $v$ without breaking the combinatorial structure of $P$? And when the combinatorial structure of $P$ is broken, it is difficult to compute  volumes.
\end{remark}

\begin{remark} 
In \cite{Reb}, Rebollo Bueno established also stochastic versions of the planar case of Mahler's conjectures. With the notations of section \ref{section:stoch}, he proved that for any centrally symmetric convex body $K$ in the plane and any $r\ge1$, 
\[
\E(\vol(P_{K,N}^*)^{-r})\le \E(\vol(P_{Q,N}^*)^{-r}),
\]
where $Q$ is a square with $\vol(Q)=\vol(K)$. For $r=1$ and $N\to+\infty$, this gives back the planar case of Mahler's conjecture. The same type of result is also established in \cite{Reb} for general convex bodies in the plane. 
\end{remark}

\subsection{The case of zonoids} The conjecture holds for zonoids and polar of zonoids, with  equality case  for cubes  (Reisner \cite{Re1, Re2}
   and Gordon, Meyer and Reisner  \cite{GMR} for a second proof). We recall that a {\it zonoid}  in $\R^n$ is a  Hausdorff limit of {\it zonotopes}, that is of finite sums of segments. Since a segment is symmetric with respect to its midpoint, any zonotope, and thus any zonoid is centrally symmetric. From now, when speaking of a zonoid $Z$, we shall suppose  that $Z=-Z$.
Also, the polar bodies of zonoids can be seen as the unit balls of finite dimensional subspaces of $L_1([0,1], dx)$. Observe  that every convex centrally symmetric body in $\R^2$ is a zonoid.
We refer to \cite{Bo, GW, Sc}    
  for basic properties of zonoids. 

  \begin{proof}(The lower bound of volume product for zonoids  \cite{GMR})
  For a  zonoid $Z\subset \R^n$ , there exists a measure $\mu$ on $S^{n-1}$ such that $h_Z(x)=\frac{1}{2}\int_{S^{n-1}} |\langle x, u\rangle| d\mu(u)$ for all $x\in \R^n$. 
 Since $\vol(Z)=\frac{1}{n} \int_{S^{n-1}} \vol_{n-1}(P_{u^\perp}Z) d\mu(u)$, one has
 \begin{align*}
 \vol(Z^*)\int_{S^{n-1}}\vol_{n-1}(P_{u^\perp}Z) d\mu(u) &=n\vol(Z)\vol(Z^*)=
 \frac{n+1}{2} \vol(Z)\int_{Z^*} h_K(x) dx\\
 &=
\frac{n+1}{2} \vol(Z)\int_{S^{n-1}} \left(\int_{Z^*} |\langle x, u\rangle| dx\right) d\mu(u).
\end{align*}\
 It follows that for some $u\in {S^{n-1}}$, one has
 $$\vol(Z^*)\vol_{n-1}(P_{u^\perp}Z) \le  \frac{n+1}{2}  \vol(Z) \int_{Z^*} | \langle x, u\rangle | dx.$$
 Now $\int_{Z^*} | \langle x, u\rangle | dx= 2\int_0^{\infty} tf(t) dt$, where $f(t)=\vol\big(Z^*\cap(u^{\perp} +tu)\big)$ is the volume in $u^{\perp} $ of the sections of 
 $Z^*$ with hyperplanes parallel to $u^{\perp}$.  Note that $f(0)=\vol(Z^*\cap u^{\perp})$ and $2\int_0^{\infty} f(t) dt=\vol(Z^*)$. 
 By the Brunn-Minkowski theorem, the function $f^{\frac{1}{n-1}}$ is concave on its support. By a classical  estimate (see for instance \cite{MiP}),
$$
\int_0^{\infty} tf(t) dt \le \frac{n}{n+1} \frac{(\int_0^{\infty} f(t) dt)^2 }{f(0)},$$ 
with equality if and only if $f(t)=f(0) (1-ct)^{n-1}_+$, for some $c>0$ and all $t\ge 0$. 
This gives $$\int_{Z^*} | \langle x, u\rangle | dx\le  2\frac{n}{n+1} \frac{4^{-1 } \vol(Z^*)^2}{  \vol_{n-1}(Z^*\cap u^{\perp}) }=
\frac{n}{2(n+1) } 
\frac{\vol(Z^*)^2 }{  \vol_{n-1}(Z^*\cap u^{\perp} ) }, $$
and thus
  $$\vol(Z^*)\vol_{n-1}(P_{u^\perp}Z)\le  \frac{n+1}{2}  \vol(Z)  
\frac{n}{2(n+1) }  \frac{\vol(Z^*)^2}{  \vol_{n-1}(Z^*\cap u^{\perp}) }, $$
  so that $$\vol(Z) \vol(Z^*)\ge \frac{4}{n}\vol_{n-1}(P_{u^\perp}Z)\vol_{n-1}(Z^*\cap u^{\perp}),$$
 which allows to conclude by induction, with the case of equality, since $P_{u^\perp}Z$ is a zonoid in dimension $n-1$ and $(P_{u^\perp}Z)^*= Z^*\cap u^{\perp}$.
 \end{proof}

 \begin{remark} Campi and  Gronchi \cite{CG2} presented a very interesting inequality on the volume of $L_p$-zonotopes, which  givesinequality, in particular,  another proof of the above result. It is interesting to note that the proof in  \cite{CG2} is based on the shadow systems technique. Another proof using shadow systems was presented by Saroglou in  \cite{Sar}.
 \end{remark}

 \begin{remark} Marc Meckes \cite{Mec}gaveanother proof of  Mahler's conjecture for zonoids, based on the notion of {\it magnitude}  introduced by  Leinster \cite{Lei}, which is a numerical isometric invariant for metric spaces. He studies  the magnitude of a convex body in hypermetric normed spaces (which include $\ell_p^n$, $p\in [1,2])$ and proved  a new upper bound for magnitude on such spaces using the Holmes-Thompson intrinsic volumes of their unit balls.
 \end{remark}
   
\subsection{The case of unconditional bodies}
\begin{deff}\label{uncond} Let $K$ in $\R^n$ be  a convex body. We say that $K$  is {\em unconditional } if  for some basis $e_1, \dots, e_n$ of $\R^n$ one has $x_1e_1+\dots + x_n e_n\in K$ if and only if $|x_1|e_1+\dots + |x_n| e_n \in K$. We say that $K$ is  {\em almost  unconditional} if for some basis $e_1, \dots, e_n$ of $\R^n$ for every $1\le i\le n$, one has $P_i K=K\cap H_i$, where $H_i$ is  linear span of $ \{e_j, j\not=i\} $ and $P_i$ is the linear projection from $\R^n$ onto $H_i$ parallel to $e_i$.
\end{deff}
If $K$ is unconditional, after a linear transformation which does not change $\VP(K)$,
we  may suppose that  $(e_1, \dots, e_n)$ is the canonical basis of $\R^n$. Unconditional bodies are almost unconditional and centrally symmetric.  Observe also that 
if $K$ is unconditional (resp. almost unconditional) with respect to some basis, then $K^*$ is also
 unconditional (resp. almost unconditional)  with respect to the dual basis.

We follow the proof of \cite{Me1} of the inequality $\VP(K)\ge \VP(B_\infty^n)$ (the first proof was given in \cite{SR1}). We don't prove  the case of equality (Hanner polytopes), which is more involved.
\begin{proof}
We use induction on $n$. It is trivial for $n=1$. We suppose that  $e_1, \dots, e_n$ is the canonical basis of $\R^n$.
Let $K_+=K\cap \R_+^n$, ${K^*}_+= K^*\cap \R_+^n$. Then 
$\VP(K)=4^n \vol(K_+)\vol(K^*_+)$.
For $x\in \R^n_+$, one has
$$ x\in  K_+\hbox{ if and only if }\langle x,y\rangle \le 1\hbox{  for any }y\in K^*_+,$$
$$ y\in  K^*_+\hbox{ if and only if }\langle x,y\rangle \le 1\hbox{  for any }x\in   K_+.$$
For $1\le i \le n$, $K_i:= K\cap\{x_i=0\}$ is  an unconditional body in $\R^{n-1}$ and  $(K_i)^*= (K^*)_i$. Let $(K_i)_+ =K_i\cap (\R^+)^n$. For $x=(x_1,\dots, x_n)\in K_+$, let $C_i(x)$  be the convex hull of $\{x\}$ with $(K_i)_+$. Since $C_i(x)$ is a cone with apex $x$ and basis $(K_i)_+$, one has
$$\vol \big(C_i(x)\big)=
\frac {x_i}{n}\vol_{n-1}\big((K_i)_+\big).$$
Thus
\begin{equation}\label{meyer1}
\vol(K_+) \ge \vol \big(\cup_{i=1}^n C_i(x)\big)= \sum_{i=1}^n \vol \big(C_i(x)\big)=\frac{1}{ n} \sum_{i=1}^n x_i\vol_{n-1}\big( (K_i)_+\big).
\end{equation}
Let $a:=\frac{1}{n\vol(K_+) }\Big(\vol_{n-1}\big( (K_1)_+\big),\dots, \vol_{n-1}\big( (K_n)_+\big)\Big)$ in $\R^n$. By (\ref{meyer1})
one has $\langle a,x\rangle \le 1$ for all $x\in K_+$, that is
$a\in K^*_+$.
Also, $a^*:=\frac{1}{n\vol(K^*_+) }  \Big(\vol_{n-1} \big( (K^*_1)_+\big),\dots, \vol_{n-1} \big(    (K^*_n)_+  \big)\Big)\in K_+$. Thus $\langle a,a^*\rangle\le 1$, that is
$$\frac{\sum_{i=1}^n \vol_{n-1}\big((K_i)_+\big)\vol_{n-1}\big((K^*_i)_+\big)}{n^2 \vol(K_+)\vol((K^*_+)} \le 1, $$
so that 
$$\VP(K)=4^n \vol(K_+)\vol(K^*_+)\ge \frac{4^n}{ n^2} \sum_{i=1}^n \vol_{n-1}\big( (K_i)_+\big)\vol_{n-1}\big( (K^*_i)_+\big).$$
For $1\le i \le n$, one has $\vol_{n-1}(K_i)= 2^{n-1}\vol_{n-1}\big( (K_i)_+\big)$ and $\vol_{n-1}(K^*_i)= 2^{n-1}\vol_{n-1}\big( (K^*_i)_+\big)$. Since the $K_i$ are also unconditional, the induction hypothesis  gives $\VP(K_i)\ge \frac{ 4^{n-1}}{(n-1)!}$, $1\le i\le n$. Thus 
$$\VP(K)\ge \frac{4}{ n^2} \sum_{i=1}^n \vol_{n-1}(K_i)\vol_{n-1}(K^*_i)\ge \frac{4}{ n^2} \cdot n \cdot\frac{4^{n-1} }{(n-1)!}= \frac{4^n}{n!}.
$$
\end{proof}
\begin{remark} A small modification of this proof allows to treat the case of almost unconditional centrally symmetric bodies. Note that every centrally symmetric body in $\R^2$ is almost unconditional.
\end{remark}

\subsection{The 3-dimensional symmetric case}
The symmetric case in  $\R^3$ was solved by Irieh and Shibota \cite{IS1} in 2017 with a quite involved proof of about sixty pages. We would like here to highlight the main ideas  and to connect it with the unconditional case presented above. We will use the shorter proof  given in \cite{FHMRZ}.

A symmetric body $K \subset \R^n$, $n\ge 3$ is not generally almost  unconditional,  and thus not unconditional. However, every planar convex body has an almost unconditional basis.  For $n=3$, the goal is to show that a $3$-dimensional convex symmetric body $K$ may still have core properties of an unconditional body.
This is done with the help of the following equipartition result: 
\begin{thm}\label{thm:equipart}
Let $K \subset \R^3$ be a  symmetric convex body. Then there exist  3 planes $H_1,H_2,H_3$ passing through the origin such that:
\begin{itemize}
	\item they split $K$ into $8$ pieces of equal volume, and
	\item for each $i=1,2,3$, the section $K \cap H_i$ are split into $4$ parts of equal area by the other two planes.
\end{itemize}
\end{thm}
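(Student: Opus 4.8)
The plan is to set this up as a topological fixed-point / Borsuk--Ulam argument, with the three planes encoded by a point on a suitable configuration space and the ``equipartition defect'' encoded by a continuous map into a Euclidean space on which the antipodal symmetry acts without fixed points. First I would parametrize a triple of planes through the origin by a triple of unit normals $(u_1,u_2,u_3)\in (S^2)^3$, or more symmetrically by an orthonormal-like frame; the symmetry $K=-K$ means that replacing $u_i$ by $-u_i$ does not change the plane $H_i$, so the natural configuration space is a quotient of $(S^2)^3$ (or, after imposing that the three planes be mutually transverse and suitably ordered, a space homotopy equivalent to $SO(3)$ or to $(\RP^2)^3$). The key point is to choose the parametrization so that it is a closed manifold of dimension equal to the number of independent equations we must solve.

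Next I would count the constraints. Asking the three planes to cut $K$ into $8$ pieces of equal volume is, after using $K=-K$ (which already forces opposite octants to have equal volume), equivalent to $8/2-1=3$ independent volume equations. Asking, for each $i$, that the other two planes cut the section $K\cap H_i$ into $4$ equal-area parts is, again using central symmetry of the section, $4/2-1=1$ equation per $i$, hence $3$ more equations; total $6$. So I want a continuous map $\Phi$ from a $6$-dimensional closed manifold carrying a free (or at least fixed-point-free) $\Z_2$-action to $\R^6$, equivariant with respect to the antipodal action on $\R^6$, and then invoke a Borsuk--Ulam-type theorem to get a zero of $\Phi$. The delicate modelling issue — and I expect this to be the main obstacle — is to arrange the domain, the $\Z_2$-action, and the components of $\Phi$ so that (a) $\Phi$ is genuinely continuous up to and including degenerate configurations (planes coinciding or becoming non-transverse), where ``which octant is which'' and ``which of the $4$ parts is which'' can jump, and (b) the relevant equivariant topological invariant is nonzero. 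A clean way to handle (a) is the standard trick of ordering the pieces by the planes' normals and passing to a symmetrized/antisymmetrized combination of the volume (resp.\ area) discrepancies so that the map extends continuously across the bad set; a clean way to handle (b) is to reduce, via the nesting ``equal-volume octants'' $\Rightarrow$ first work on a smaller space, to a form of the Borsuk--Ulam theorem on $S^6$ or to the Ham–Sandwich / equipartition machinery already in the literature (Makeev, Klartag, and the measure-equipartition results built on the cohomology of configuration spaces).

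Concretely, the steps I would carry out are: (1) fix the parametrization of admissible plane-triples and verify it is a closed manifold with a natural fixed-point-free involution; (2) for a plane-triple, use $K=-K$ to reduce the $8$ volumes to $4$ and impose the $3$ equalities $V_{\text{octant}_1}=V_{\text{octant}_2}=\cdots$; (3) on the subset where (2) holds — or, better, by encoding (2) as three coordinates of $\Phi$ rather than restricting — add the $3$ section-area-balance coordinates, writing each as a signed area difference that is odd under the relevant involution; (4) check equivariance and continuity of the resulting $\Phi\colon M^6\to\R^6$, in particular continuity at degenerate triples; (5) apply the appropriate Borsuk--Ulam / equivariant-obstruction theorem to conclude $\Phi$ has a zero, which is exactly a triple of planes as required. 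I expect steps (1) and (4) to be where all the real work lies: choosing a model of the configuration space on which the $\Z_2$-action is free and the obstruction class does not vanish, and proving genuine continuity of the discrepancy map across the stratum of non-transverse or coincident planes; the algebra in steps (2)--(3) is routine once the bookkeeping of orientations is fixed.
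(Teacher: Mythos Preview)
The paper does not give a self-contained proof of this equipartition theorem: it simply invokes Theorem~2.1 of Klartag \cite{Kl1} and refers to \cite{FHMRZ} for the details of the deduction. Your closing remark --- that one can ``reduce \ldots\ to the equipartition machinery already in the literature (Makeev, Klartag, \ldots)'' --- is therefore exactly what the paper does, so in that sense your proposal already contains the paper's approach.

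Your direct Borsuk--Ulam sketch is in the right spirit, and your dimension count is correct (three volume constraints plus three section-area constraints, against a six-dimensional space of plane-triples). But there is a genuine gap in the specific setup you describe: a single $\Z_2$ will not do the job. Under the diagonal antipodal involution $(u_1,u_2,u_3)\mapsto(-u_1,-u_2,-u_3)$ on $(S^2)^3$, the octant labelled $\epsilon$ is sent to $-\epsilon$, and since $K=-K$ already forces $\vol(K_\epsilon)=\vol(K_{-\epsilon})$, every volume-defect component is \emph{invariant} rather than odd --- so the $\Z_2$ Borsuk--Ulam theorem you invoke gives nothing. The natural symmetry group here is $(\Z_2)^3$, acting on $(S^2)^3$ by flipping each normal independently; if you decompose into characters you find that the three surviving volume constraints and the three section-area constraints each transform as one of the three ``weight-two'' characters $\epsilon_i\epsilon_j$ (two copies of each), and the nontrivial topological input is that the corresponding $(\Z_2)^3$-equivariant obstruction class on $(S^2)^3$ is nonzero. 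That computation (or an equivalent formulation) is precisely what Klartag's theorem packages, which is why both \cite{FHMRZ} and the present paper cite it rather than reproving it.
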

Note that theorem \ref{thm:equipart} belongs to the very rich theory of equipartitions. For example, a  celebrated result of Hadwiger \cite{Hadwiger}, answering a question of Gr\"unbaum \cite{Grunbaum}, shows that for any absolutely continuous finite measure in $\R^3$,  there exist three planes for which any octant has $1/8$ of the total mass. For proving Theorem \ref{thm:equipart},  one can use a result of Klartag (Theorem 2.1 of \cite{Kl1}); we refer to \cite{FHMRZ} for details. 

Our goal is to create an analog of formula (\ref{meyer1}). Consider  a sufficiently regular oriented hypersurface $A \subset \R^n$ and  define the vector 
$$
\overrightarrow{V}(A)=\int_A \overrightarrow{n_A}(x)dx,
$$
where $ \overrightarrow{n_A}(x)$ is the unit normal to $A$ at $x$ defined by its orientation. Next, for a convex body $K\subset \R^n$ with $0\in \inte(K)$, the orientation of a subset $A \subset \partial K$ is given by the outer normal $ \overrightarrow{n_K}$ to $K$.  If 
 $\C(A):=\{rx;\ 0\le r\le1, x\in A\}$, then
 $$
 \vol (\C(A))=\frac{1}{n}\int_A \langle x, \overrightarrow{n_K}(x)\rangle  dx.
$$
The following is a key proposition for our proof.
\begin{proposition}\label{Meyergen}
Let $K\subset \R^n$ be a convex body, with $0\in\inte(K)$, and let $A$ be a  Borel subset of $\partial K$ with $\vol(\C(A)) \not=0$, then  for all $x\in K$,
$$
\frac{1}{n} \langle x, \overrightarrow{V}(A)\rangle \leq \vol(\C(A)) {\rm{\ and\ thus \  }} \frac{\overrightarrow{V}(A)}{n\vol(\C(A))}\in K^*.
$$
\end{proposition}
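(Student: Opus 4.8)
The plan is to establish the inequality $\frac{1}{n}\langle x, \overrightarrow{V}(A)\rangle \le \vol(\C(A))$ for every $x \in K$ by relating the left-hand side to the volume of a translated cone and then using convexity of $K$. First I would unwind the definition: since $\overrightarrow{V}(A) = \int_A \overrightarrow{n_K}(y)\,dy$, we have
$$
\frac{1}{n}\langle x, \overrightarrow{V}(A)\rangle = \frac{1}{n}\int_A \langle x, \overrightarrow{n_K}(y)\rangle\,dy,
$$
and I would compare this against the identity already recorded in the excerpt, $\vol(\C(A)) = \frac{1}{n}\int_A \langle y, \overrightarrow{n_K}(y)\rangle\,dy$. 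Thus the claim reduces to showing
$$
\int_A \langle x - y, \overrightarrow{n_K}(y)\rangle\,dy \le 0 \quad\text{for all } x \in K.
$$
This is the heart of the matter, and it follows pointwise: since $K$ is convex and $y \in \partial K$ with outer normal $\overrightarrow{n_K}(y)$, the supporting hyperplane inequality gives $\langle x - y, \overrightarrow{n_K}(y)\rangle \le 0$ for every $x \in K$. Integrating this pointwise inequality over $A$ yields the result.

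An alternative route, perhaps cleaner and closer in spirit to formula (\ref{meyer1}), is geometric: for fixed $x \in K$, consider the solid $\C_x(A) := \{x + r(y - x) : 0 \le r \le 1,\ y \in A\}$, the cone with apex $x$ over the base $A$. By the divergence theorem (or by the same elementary computation that gives $\vol(\C(A))$, now applied with apex $x$ instead of the origin), one has
$$
\vol(\C_x(A)) = \frac{1}{n}\int_A \langle y - x, \overrightarrow{n_K}(y)\rangle\,dy = \vol(\C(A)) - \frac{1}{n}\langle x, \overrightarrow{V}(A)\rangle,
$$
where the orientation of $A$ is still the outer normal of $K$ and I would need to check the sign bookkeeping carefully. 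Since $x \in K$ and $A \subset \partial K$, convexity guarantees $\C_x(A) \subset K$, so in particular $\vol(\C_x(A)) \ge 0$, which is exactly $\frac{1}{n}\langle x, \overrightarrow{V}(A)\rangle \le \vol(\C(A))$.

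The second conclusion is then immediate: the inequality $\langle x, \overrightarrow{V}(A)/(n\vol(\C(A)))\rangle \le 1$ holds for all $x \in K$, which by the definition (\ref{dualz}) of the polar body says precisely that $\overrightarrow{V}(A)/(n\vol(\C(A))) \in K^*$ (here $\vol(\C(A)) \ne 0$ is used so that the division makes sense, and $\vol(\C(A)) > 0$ follows since $0 \in \inte(K)$ forces $\langle y, \overrightarrow{n_K}(y)\rangle > 0$, making the integrand defining $\vol(\C(A))$ nonnegative). The main obstacle I anticipate is purely technical: justifying the surface integral identity for $\vol(\C_x(A))$ when $A$ is merely a Borel subset of $\partial K$ rather than a smooth piece — one would approximate $A$ by nicer sets, or invoke that $\partial K$ is Lipschitz so that $\overrightarrow{n_K}$ is defined almost everywhere and the coarea/divergence machinery applies, with all the cone-volume formulas holding up to sets of measure zero. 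None of the geometry is hard; the care is in the measure-theoretic regularity.
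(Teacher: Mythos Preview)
Your proposal is correct, and your first route is exactly the paper's proof: the authors simply observe the supporting-hyperplane inequality $\langle x, \overrightarrow{n_K}(z)\rangle \le \langle z, \overrightarrow{n_K}(z)\rangle$ for $x\in K$, $z\in\partial K$, and integrate over $A$ to obtain $\langle x,\overrightarrow{V}(A)\rangle\le n\vol(\C(A))$. Your second geometric route via the translated cone $\C_x(A)$ is a pleasant reinterpretation of the same inequality (the nonnegativity of $\vol(\C_x(A))$ is precisely the integrated supporting-hyperplane bound), but the paper does not bother with it.
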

\begin{proof} For all $x\in K$, we have $\langle x, \overrightarrow{n_K}(z)\rangle \le \langle z,  \overrightarrow{n_K}(z)\rangle$ for every $z\in\partial K$.  Thus for all $x\in K$, 
$$
\langle x, \overrightarrow{V}(A)\rangle = \int_A \langle x,  \overrightarrow{n_K}(z)\rangle  d z\leq \int_A \langle z, \overrightarrow{n_K}(z)\rangle  dz =n\vol(\C(A)). 
$$
\end{proof}

\begin{corollary}\label{corme}
Let $K$ be a convex body in $\R^n$ with $0\in\inte(K)$. If $A \subset \partial K$ and $B\subset \partial K^*$  are  Borel subsets such that $\vol(\C(A))>0$ and $\vol(\C(B))>0$, then 
 $$
 \langle \overrightarrow{V}(A), \overrightarrow{V}(B) \rangle  \le n^2\vol(\C(A))\vol(\C(B)).
 $$
\end{corollary}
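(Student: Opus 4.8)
The plan is to apply Proposition~\ref{Meyergen} twice and combine the two conclusions via the definition of polarity. First I would invoke Proposition~\ref{Meyergen} for the body $K$ and the Borel set $A\subset\partial K$: since $\vol(\C(A))>0$, the proposition gives that the vector $a:=\dfrac{\overrightarrow{V}(A)}{n\vol(\C(A))}$ lies in $K^*$. Next I would apply the same proposition, this time with $K$ replaced by $K^*$ (which is again a convex body with $0\in\inte(K^*)$, by the bipolar theorem) and with the Borel set $B\subset\partial K^*$; since $\vol(\C(B))>0$, this yields that the vector $b:=\dfrac{\overrightarrow{V}(B)}{n\vol(\C(B))}$ lies in $(K^*)^*=K$.

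Having produced $a\in K^*$ and $b\in K$, the conclusion follows from the very definition \eqref{dualz} of the polar body: a point of $K^*$ paired with a point of $K$ has inner product at most $1$, so $\langle a,b\rangle\le 1$. Writing this out,
\[
\left\langle \frac{\overrightarrow{V}(A)}{n\vol(\C(A))},\ \frac{\overrightarrow{V}(B)}{n\vol(\C(B))}\right\rangle\le 1,
\]
and clearing the positive denominators gives exactly
\[
\langle \overrightarrow{V}(A),\overrightarrow{V}(B)\rangle\le n^2\,\vol(\C(A))\,\vol(\C(B)),
\]
which is the assertion of the corollary.

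There is essentially no obstacle here: the corollary is a direct two-line consequence of Proposition~\ref{Meyergen}. The only point requiring a moment's care is the symmetric use of the proposition with the roles of $K$ and $K^*$ interchanged, which is legitimate precisely because polarity is an involution on convex bodies containing the origin in their interior, so that $b\in (K^*)^*=K$. I would state that application explicitly rather than leave it implicit, since it is the one place where the bipolar theorem is used.
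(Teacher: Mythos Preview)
Your proof is correct and follows exactly the same approach as the paper: apply Proposition~\ref{Meyergen} to $K$ and to $K^*$ to obtain $\frac{\overrightarrow{V}(A)}{n\vol(\C(A))}\in K^*$ and $\frac{\overrightarrow{V}(B)}{n\vol(\C(B))}\in K$, then use the defining inequality of polarity. The paper's proof is simply the one-line version of what you wrote.
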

\begin{proof} We  use the Proposition \ref{Meyergen} to get  $\frac{\overrightarrow{V}(A)}{n\vol(\C(A))}\in K^*$ and $\frac{\overrightarrow{V}(B)}{n\vol(\C(B))}\in K$.
\end{proof}
\noindent {\bf Proof of Conjecture 2 for $n=3$:} Since the volume product is continuous, it is enough to prove the conjecture for a centrally symmetric, smooth, strictly convex body $K$ (see \cite{Sc} Section 3.4).
From the linear invariance of the volume product, we may assume that the equipartition property obtained in  Theorem \ref{thm:equipart} is satisfied by the coordinates planes given by the canonical orthonormal basis $(e_1,e_2,e_3)$. 
As in  the unconditional case, we  divide $\R^3$ and the body $K$ into the octants defined by this basis, which define cones as in Corollary \ref{corme}. 
The main issue is that, in a sharp difference with the unconditional case, the dual cone to the cone defined as  an intersection of $K$ with an  octant is not the intersection of $K^*$ with this octant.  We will need  a bit of combinatorics to work around this issue.  

For $\eps\in\{-1;1\}^3$, let the $\eps$-octant be $\{x\in \R^3; \eps_i x_i\ge 0 \mbox{ for }   i=1,2,3\}$ and for $L\subset\R^3$, let $L_{\eps}$  be the intersection of $L$ with the $\eps$-octant: 
  $ 
L_{\eps}=\{x\in L; \eps_i x_i\ge0;\    i=1,2,3\}.
$   
 Let $N(\eps):=\{\eps'\in \{-1,1\}^3: \sum_{i=1}^3 |\eps_i-\eps'_i |=2\}$. Then $\eps' \in N(\eps)$ iff $[\eps, \eps']$ is an edge  $[-1,1]^3$.    

If  $K_{\eps} \cap K_{\eps'}$ is a hypersurface, we define $K_{\eps} \overrightarrow{\cap} K_{\eps'}$ to be oriented according to  the outer normals of $\partial K_{\eps}$.  Using Stokes theorem, we obtain 
$$
\overrightarrow{V}(\partial K_{\eps} )=\int_{\partial K_{\eps} }  \overrightarrow{n_{\partial K_{\eps}}}(x)dx
 -\sum_{\eps' \in N(\eps)} \overrightarrow{V}(K_{\eps} \overrightarrow{\cap} K_{\eps'}).
$$  
Using the equipartition of the areas of $K\cap  e_i^{\perp}$,
we get
$$
  \overrightarrow{V}(\partial K_{\eps})=-\sum_{\eps' \in N(\eps)} \overrightarrow{V}(K_{\eps} \overrightarrow{\cap} K_{\eps'} )=\sum_{i=1}^3 \frac{\vol(K\cap 
  e_i^{\perp})} {4}
  \eps_{i} \overrightarrow{e_i}.
$$

Let us look at the dual.
Since $K$ is strictly convex and smooth, there exists a diffeomorphism $\varphi:\partial K\to \partial K^*$ such that $\langle \varphi(x),x\rangle=1$ for all $x\in \partial K$.   
We extend $\varphi$ to $\R^3$ by homogeneity of degree one: $\varphi(\lambda x)=\lambda \varphi(x)$  for $\lambda\geq 0$.    
Then  
$$K^*=\bigcup_{\eps}\varphi (K_{\eps})\mbox{ and }\vol(K^*)=\sum_{\eps}\vol\big(\varphi (K_{\eps})\big).    $$
From the equipartition of volumes, one has
  $$
  \vol(K)\vol(K^*) =\sum_{\eps } \vol(K)\vol(\varphi (K_{\eps}))=8\sum_{\eps} \vol(K_{\eps})\vol\big(\varphi (K_{\eps})\big).
  $$  
From Corollary  \ref{corme}, we deduce that for $\eps\in\{-1,1\}^3$
\begin{eqnarray*}
\vol(K_{\eps})\vol\big(\varphi (K_{\eps})\big) \ge \frac{1}{9} \langle  \overrightarrow{V}(\partial K_{\eps}), \overrightarrow{V}\big(\varphi\big(\partial K_{\eps})\big)\rangle. 
\end{eqnarray*}  

Thus 
\begin{align}
\vol(K)\vol(K^*)&\ge\frac{8}{9}\sum_{\eps}  \langle \overrightarrow{V}( \partial K_{\eps}),\overrightarrow{V}\big( \varphi(\partial K_{\eps})\big)\rangle  \nonumber \\
&= 
\frac{8}{9} \sum_\eps\langle  \sum_{i=1}^3 \frac{ \vol(K\cap e_i^{\perp})}{4} \eps_i \overrightarrow{e_i},   \overrightarrow{V}( \varphi\big(\partial K_{\eps})\big)\rangle \nonumber\\
 &= \frac{8}{9}\sum_{i=1}^3 \frac{ \vol( K\cap {e_i}^{\perp})}{4} \langle \overrightarrow{e_i}, \sum_\eps \eps_i  \overrightarrow{V}( \varphi(\partial K_{\eps})\rangle.  \nonumber
\end{align}
Now we use Stokes theorem for $\varphi(\partial K)$ to get 
$$
\overrightarrow{V}\big(\varphi( (\partial K_{\eps})\big)=-\sum_{\eps' \in N(\eps)}\overrightarrow{V}\big(\varphi(K_{\eps}\overrightarrow{\cap} K_{\eps'}')\big).
$$
The next step requires a careful computation of the sums  following orientation of all surfaces, which  gives many cancellations. 
Next one combines the correct parts of $K$ and $\varphi(K)$ to get 
$$
\vol(K)\vol(K^*)\ge \frac{4}{9}\sum_{i=1}^3  \vol_{n-1}(K\cap e_i ^{\perp})\langle  \overrightarrow{e_i}, V \big(\varphi(K\cap  \overrightarrow{e_i}^\bot)\big)\rangle
$$
(see \cite{FHMRZ} for  the precise computations). Let $P_i$ be the orthogonal projection  onto ${e_i}^{\perp}$. 
Then $P_i: \varphi(K\cap  {e_i}^{\perp})\to P_i (K^*)$ is a bijection. Using Cauchy's formula for the volume of projections, we get
\begin{align*}\langle \overrightarrow{e_i}, V \big(\varphi(K\cap {e_i}^{\perp})\big)\rangle & =   \int\limits_{\varphi(K\cap {e_i}^{\perp})}  \langle \overrightarrow{n_{\varphi(K\cap  {e_i}^{\perp})}}(x),  \overrightarrow{e_i} \rangle dx \\ &=\vol_{n-1} \big(P_i(  \varphi(K\cap  {e_i}^{\perp}))\big) 
=   \vol_{n-1}\big(P_i(K^*)\big). 
\end{align*}
and if $\eps=(\eps_1,\dots, \eps_n)$, 
$$
\vol(K)\vol(K^*)\ge \frac{8}{9}\sum_{i=1}^3 \frac{ \vol_{n-1}\big(K\cap{e_i}^{\perp}\big)}{4} \langle \overrightarrow{e_i}, \sum_\eps \eps_i  \overrightarrow{V}\big( \varphi(\partial K_{\eps})\big)\rangle.  
$$ 
Finally
\begin{align*}
\vol(K)\vol(K^*)&\ge \frac{4}{9}\sum_{i=1}^3  \vol_{n-1}(K\cap e^\perp_{i})   \vol_{n-1}\big(P_i(K^*)\big) \\&= \frac{4}{9}\sum_{i=1}^3  \vol_{n-1}(K\cap e^\perp_{i})\vol_{n-1}\big((K\cap e^\perp_{i})^*\big)\\ &\ge  \frac{4}{9}\times 3\times \frac{4^2}{2!}=\frac{4^3}{3!}.
\end{align*}
\begin{flushright}
$ \Box $ \\
\end{flushright}

\subsection{Further  special cases where the conjectures hold}

Let us list here a number of other special cases in which the conjectured inequality was proved:
\begin{itemize}

 \item Symmetric polytopes in $\R^n$ with $2n+2$ vertices  for $n\le 9$  (Lopez and Reisner \cite{LR}) and for any $n$ (Karasev \cite{Ka}). 
\item  For $p\ge 1$,  hyperplane sections through $0$ of $B_p^n=\{(x_1, \dots, x_n)\in\R^n; \sum_{i=1}^n |x_i|^p\le 1\}$  (Karasev
      \cite{Ka}).   Karasev’s proof of those results is, so far, one of the few concrete applications of the symplectic geometry, through billiards approach, to proving special cases of Mahler’s conjecture.
   \item  Bodies of revolution \cite{MR1}.
 \item Some bodies with many symmetries:  Barthe and Fradelizi in \cite{BF}, established that a convex body $K$ which is symmetric with respect to a family of hyperplanes whose intersection is reduced to one point, satisfies  Conjecture \ref{mahler}. More generally, it is proved in \cite{BF} that if $K$ is invariant under the reflections fixing $P_1\times \cdots\times P_k$, where for $1\le i\le k$, the $P_i$ are regular polytopes or an Euclidean ball in a subspace $E_i$ and $\R^n=E_1\oplus\cdots\oplus E_k$, then 
 $\P(K)\ge\P(P_1\times \cdots\times P_k)$.
  \item 
Iriyeh and Shibata established  similar results  in \cite{IS2, IS3}. They determined the exact lower bound of the volume product of convex bodies invariant by some group of symmetries (many classical symmetry groups in dimension 3 \cite{IS2} and for the special orthogonal group of the simplex and of the cube \cite{IS3}). 

 \item Polytopes in $\R^n$ with not more that $n+3$ vertices \cite{MR2}.
 
 \item  Almost unconditional symmetric bodies (Saint Raymond \cite {SR1}) with equality case for Hanner polytopes (Meyer \cite{Me1}, Reisner \cite{Re3}).
 Also in \cite {SR1} is proved a result for unconditional sums of convex bodies :
 For $1\le i\le m$, let $K_i \subset \R^{d_i}$ be convex symmetric bodies and let $L\subset \R^m$ be an unconditional body with respect to  the canonical basis $e_1, \dots, e_m$. We 
 define {\it the unconditional sum of $K_1,\dots, K_m$ with respect to $L$} by
 $$\hskip 15mm
 K_1\oplus_L\dots \oplus_L K_m = \{(x_1,\dots,x_m)\in \R^{d_1}\times\dots\times \R^{d_m};  \| x_1\|_{K_1}e_1+\dots +\| x_m\|_{K_m}e_m \in L\}. 
 $$
 Clearly $K_1\oplus_L\dots \oplus_L K_m$ is a symmetric convex body in $\R^{d_1+\dots+d_m}$. Moreover it is easy to see that
 $\big(K_1\oplus_L\dots \oplus_L K_m \big)^*= K_1^*\oplus_{L^*}\dots \oplus_{L^*} K_m^*$ and denoting $L_+=L\cap \R^m_+$
 and $^*L_+=L^*\cap \R^m_+$, one has 
 $$
 \VP(K_1\oplus_L\dots \oplus_L K_m)= \Big(\int_{(t_1,\dots, t_m) \in L_+}\prod_{i=1}^m t_i^{d_i -1} dt_1\dots dt_m\Big) \times $$ $$\Big(\int_{(t_1,\dots, t_m) \in L^*_+} \prod_{i=1}^m t_i^{d_i -1} dt_1\dots dt_m  \Big) \prod_{i=1}^m \VP(K_i)$$
 and 
 $$\hskip 10mm \Big(\int_{(t_1,\dots, t_m) \in L_+}\prod_{i=1}^m t_i^{d_i-1} dt_1\dots dt_m\Big)   \Big(\int_{(t_1,\dots, t_m) \in L^*_+} \prod_{i=1}^m t_i^{d_i -1} dt_1\dots dt_m  \Big)\ge \frac{d_1 !\times \dots\times d_m !}{(d_1+\dots +d_m)!}\ .$$
 Observe that it follows from \cite{Me1} or \cite {Re3} that there is equality in the last inequality if and only if $L$ is a Hanner polytope.
Finally,  if $\VP(K_i)\ge 4^i/i!$, $1\le i\le m$, then
$$\VP(K_1\oplus_L\dots \oplus_L K_m)\ge \frac{4^{d_1+\dots+d_m}}
{(d_1+\dots+d_m)!}. $$
\item  Although their volumes have been computed (see \cite{SR2}), it is not known whether the unit ball of classical ideals of operators satisfy Conjecture 2.
\item An interpretation of Conjecture 2 in terms of wavelets was given in \cite{Ba2}.

\item Connections of Mahler's conjecture and the Blaschke-Santal\'o inequality to the maximal and minimal of $\lambda_1(K)\lambda_1(K^*)$, where $K$ is a convex body and  $\lambda_1(K)$ is first eigenvalue of the Laplacian on the relative interior of K with Dirichlet condition $u = 0$ on $\partial K$ was given in \cite{BuF}.

\end{itemize}

\subsection{Local minimizers and stability results}

One may investigate the properties of the local minimizers for $\VP(K)$. A natural open question is whether such a minimizer must be a polytope. A number of results in this direction were proved by studying convex bodies with positive curvature. Stancu \cite{St} proved that if $K$ is a convex body, which is smooth enough and has a strictly positive Gauss curvature everywhere, then the volume product of $K$ can not be a local minimum. 
She showed it as a consequence of the fact that, for some $\delta(K)>0$, one has
\[
\vol(K_\delta)\vol((K_\delta)^*)\ge
\vol(K)\vol(K^*)\ge \vol(K^\delta)\vol((K^\delta)^*),
\] 
for any $\delta\in(0,\delta(K))$, where $K_\delta$ and $K^\delta$ stand for the convex floating body and the illumination body associated to $K$ with parameter $\delta$. 
A stronger result for local minimizers was proved in \cite{RSW}:  if $K$ is a convex body which is local minimizer of volume product, then $K$ has no  positive curvature at any point of its boundary. The study of local minimizers was continued in \cite{HHL}, where the authors  computed the first and the second derivative of the volume product in terms of the support function.
 Those results may be seen as a  hint toward
the conjecture that a minimizer must be a polytope. We also note that \cite{GM} extended it to the functional
case (see Section \ref{func} below).

It is known  that the  conjectured global minimizers, that is Hanner polytopes in the centrally symmetric case and simplices in the general case, are actually local minimizers. This question originates from the blog of  Tao \cite{T1, T2}, where a number of ideas that may lead to a better understanding of the volume product were discussed. Nazarov, Petrov, Ryabogin and Zvavitch \cite{NPRZ} were able to show that the cube and the cross-polytope are local minimizers. Kim and Reisner \cite{KiR} generalized this result to the case of non-symmetric bodies proving that the simplex is a local minimizer.

The most general  result in the symmetric case  was obtained by  Kim \cite{Ki}  who considered the case of 
 Hanner polytopes.  More precisely, let 
 $$d_{BM}(K,L)=\inf\{d:\; d>0, \mbox{ there exists } T \in GL(n) \mbox{  such that
 } K\subseteq TL\subseteq d K\}$$ be the Banach-Mazur
multiplicative distance between two symmetric convex bodies $K, L \subset \R^n$. Then
\begin{thm}\label{JKim}
 There exist constants $\delta(n), c(n)>0$  depending  only on $n$  such that if $K$ be a symmetric convex body in $\R^n$ with $$\min \{d_{BM}(K,H): H \mbox{ is a Hanner polytope in } \R^n \} = 1+ \delta,$$ for some $0<\delta  \le \delta (n)$, then
$$
\VP(K)\ge(1+c(n)\delta)\cdot\VP(B_\infty^n).
$$
\end{thm}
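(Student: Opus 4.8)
The plan is to prove the estimate by induction on $n$, exploiting the recursive $\ell_1/\ell_\infty$ structure of Hanner polytopes. By affine invariance of $\VP$ and of $d_{BM}$, and since there are only finitely many $GL(n)$-orbits of Hanner polytopes in $\R^n$, it suffices to fix one Hanner polytope $H$, assume it realizes the minimal Banach--Mazur distance to $K$, and (after a linear map) assume $\frac{1}{1+\delta}H\subseteq K\subseteq H$; then also $H^*\subseteq K^*\subseteq(1+\delta)H^*$ and $\VP(H)=\VP(B_\infty^n)=\frac{4^n}{n!}$. The case $n=1$ is trivial, every symmetric segment being a Hanner polytope. For $n\ge2$ I fix a splitting $\R^n=\R^{n_1}\oplus\R^{n_2}$ with $H=E\oplus_\infty F$, $E\subset\R^{n_1}$ and $F\subset\R^{n_2}$ nonzero Hanner polytopes; by \eqref{oneinf} and $\VP(K)=\VP(K^*)$, the case $H=E\oplus_1F$ reduces to this one.

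Next I reduce matters to a single ``non-splitting'' estimate. Fix a large constant $K_0$ and let $\mathcal F$ be the family of bodies $L_1\oplus_\infty L_2$ with $L_i\subset\R^{n_i}$ symmetric convex and $d_{BM}(L_1,E),d_{BM}(L_2,F)\le1+K_0\delta$. Since $H\in\mathcal F$, the distance $1+\rho$ from $K$ to $\mathcal F$ satisfies $\rho\le\delta$. Choose $L_1\oplus_\infty L_2\in\mathcal F$ realizing this distance, let $H_i$ be the Hanner polytope nearest to $L_i$, and put $d_{BM}(L_i,H_i)=1+\delta_i$ (so $\delta_i\le K_0\delta$). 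Since $H_1\oplus_\infty H_2$ is again a Hanner polytope and $d_{BM}$ is submultiplicative, comparing $K$ to $H_1\oplus_\infty H_2$ gives $\delta\le C(\rho+\delta_1+\delta_2)$, so it is enough to gain a term of order $\rho+\delta_1+\delta_2$. Affine invariance of $\VP$ gives $\VP(K)\ge(1+\rho)^{-n}\VP(L_1\oplus_\infty L_2)$, and the multiplicativity $\VP(L_1\oplus_\infty L_2)=\frac{n_1!\,n_2!}{n!}\VP(L_1)\VP(L_2)$ together with the inductive estimates $\VP(L_i)\ge(1+c(n_i)\delta_i)\VP(B_\infty^{n_i})$ accounts for the $\delta_i$. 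So the theorem follows from the single inequality
\[
\VP(K)\ \ge\ (1+c_n\rho)\,\VP(L_1\oplus_\infty L_2),
\]
since then $\VP(K)\ge(1+c_n\rho)(1+c(n_1)\delta_1)(1+c(n_2)\delta_2)\VP(B_\infty^n)\ge\big(1+\tfrac{1}{C}\min\{c_n,c(n_1),c(n_2)\}\,\delta\big)\VP(B_\infty^n)$ for $\delta$ small, taking $\delta(n)\le\min_i\delta(n_i)/K_0$ to license the induction.

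The main obstacle is this displayed inequality: genuine failure of $K$ to split along $H=E\oplus_\infty F$ must cost a \emph{linear} factor in the volume product. I would derive it from a stability version of the rigidity behind Meyer's treatment of unconditional bodies: decompose $\partial K$ and $\partial K^*$ over the facets of $H$ and of $H^*$, form the associated cone volumes, and apply the cone estimate of Proposition~\ref{Meyergen} and Corollary~\ref{corme} facet by facet. For a Hanner polytope every inequality in this chain is an equality --- this is Meyer's equality analysis \cite{Me1} transported to the $\oplus_\infty$-decomposition --- so for $K$ near $H$ each inequality carries a deficit, and one must show that the total deficit is bounded below by a constant multiple of $\rho$ whenever $K$ lies at distance $1+\rho$ from $\mathcal F$, and that this deficit feeds linearly into $\VP(K)-\frac{n_1!\,n_2!}{n!}\VP(L_1)\VP(L_2)$; equivalently, near-equality in the cone/duality inequalities must force $K$ to be $O(\mathrm{deficit})$-close to an element of $\mathcal F$. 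Here the combinatorics of Hanner polytopes enters essentially: their face lattice (the $3^n$ faces, the vertex--facet pairing with values in $\{-1,0,1\}$, the rigidity of the $\ell_1/\ell_\infty$ tree) is precisely what makes Meyer's inequalities tight and what lets the deficit be estimated from below by a quantitative ``non-product'' defect of $K$. Everything else --- the reduction above, the $d_{BM}$ bookkeeping, and the propagation of the $O(\rho^2)$ errors through the recursion --- is routine by comparison. A more computational route (parametrizing symmetric perturbations of $H$ by support functions and checking positive definiteness of the second-variation form transverse to the affine and Hanner-stratum directions, in the spirit of Nazarov--Petrov--Ryabogin--Zvavitch \cite{NPRZ}) becomes unwieldy for a general Hanner polytope because of the size of its face lattice, which is why the recursive $\ell_1/\ell_\infty$ approach seems preferable.
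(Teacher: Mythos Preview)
The paper does not prove this theorem; it is stated as a result of Kim \cite{Ki} and then used as a black box (e.g.\ in the proof of Theorem~\ref{thm:stability_BM}). So there is no ``paper's own proof'' to compare against here.

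On its own merits, your proposal is a strategy outline, not a proof. The inductive reduction to a ``non-splitting'' estimate is a reasonable plan (modulo some bookkeeping: you never pin down $K_0$, and you need the minimizing product in $\mathcal F$ rather than in the full class of products for your cone argument, which requires checking the constraint is not active). But you explicitly label the displayed inequality $\VP(K)\ge(1+c_n\rho)\,\VP(L_1\oplus_\infty L_2)$ as ``the main obstacle'' and then do not prove it. Your suggestion---run Meyer's cone-volume inequalities facet by facet and argue that near-equality in all of them forces $K$ to be $O(\text{deficit})$-close to a product---is precisely where all the difficulty lies. Meyer's equality analysis in \cite{Me1} characterizes Hanner polytopes \emph{among unconditional bodies}; here $K$ is merely symmetric, and converting a qualitative equality characterization into a \emph{linear} quantitative stability bound (linear in $\rho$, not quadratic or worse) is exactly the content of Kim's paper, not something one can wave through. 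Kim's actual argument does go through a detailed combinatorial analysis of the face lattice of a general Hanner polytope together with a perturbation computation in the spirit of \cite{NPRZ}; calling that route ``unwieldy'' does not substitute for carrying out your proposed alternative, whose hardest step is left entirely open.
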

 
The above theorem was used in \cite{KiZ} to show the stability of the volume product around the class of unconditional bodies.   The question of stability for minima and  maxima was also treated in various cases \cite{BMMR, BH, KiZ, Bor, FHMRZ}. A  general approach to global stability of the volume product was considered in  \cite{FHMRZ}, where the following natural lemma was proved:
\begin{lemma}\label{lem:metric}
Let $({\mathcal A}_1,d_1)$ be a compact metric space, $({\mathcal A}_2,d_2)$ be a metric space, $f:{\mathcal A}_1\to {\mathcal A}_2$ be a continuous function and $D$ be a closed subset of ${\mathcal A}_2$. Then, 
   
 \noindent(1) For any $\beta>0$, there exists $\alpha>0$, such that $d_1(x,f^{-1}(D))\ge\beta$ implies $d_2(f(x),D)\ge\alpha$.
 
\noindent(2) If  for some $c_1,c_2>0$, $d_1(x,f^{-1}(D))<c_1$ implies $d_2(f(x), D))\ge c_2d_1(x,f^{-1}(D)),$ then  for some $C>0$, one has
$d_1(x,f^{-1}(D)) \le c d_2(f(x), D))$ for every $x \in {\mathcal A}_1.$

 \end{lemma}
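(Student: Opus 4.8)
The plan is to prove each of the two statements by a straightforward compactness-plus-contradiction argument, exploiting the fact that $f$ is continuous and $\mathcal{A}_1$ is compact. For part (1), I would argue by contradiction: suppose the conclusion fails for some $\beta>0$. Then for every $n\in\mathbb{N}$ there is a point $x_n\in\mathcal{A}_1$ with $d_1(x_n,f^{-1}(D))\ge\beta$ but $d_2(f(x_n),D)<1/n$. By compactness of $(\mathcal{A}_1,d_1)$, pass to a subsequence $x_{n_k}\to x_\infty\in\mathcal{A}_1$. Continuity of $f$ gives $f(x_{n_k})\to f(x_\infty)$, and since $d_2(f(x_{n_k}),D)\to 0$ and $D$ is closed, we get $f(x_\infty)\in D$, i.e. $x_\infty\in f^{-1}(D)$. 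But $d_1(x_{n_k},f^{-1}(D))\ge\beta$ for all $k$ passes to the limit to give $d_1(x_\infty,f^{-1}(D))\ge\beta>0$, contradicting $x_\infty\in f^{-1}(D)$. Hence such an $\alpha>0$ exists.

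For part (2), the hypothesis controls $d_2(f(x),D)$ from below only in the regime $d_1(x,f^{-1}(D))<c_1$; I need to produce a single global constant. The idea is to split $\mathcal{A}_1$ into the "near" region $N=\{x: d_1(x,f^{-1}(D))<c_1\}$ and the "far" region $\mathcal{A}_1\setminus N$. On $N$, the hypothesis directly gives $d_2(f(x),D)\ge c_2\, d_1(x,f^{-1}(D))$, so the desired inequality holds there with $c=1/c_2$ (assuming $c_2\le 1$; otherwise one harmlessly enlarges $c$). On the far region, apply part (1) with $\beta=c_1$ to obtain an $\alpha>0$ with $d_2(f(x),D)\ge\alpha$ whenever $d_1(x,f^{-1}(D))\ge c_1$. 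Since $\mathcal{A}_1$ is compact (hence bounded), $\operatorname{diam}(\mathcal{A}_1)=:R<\infty$, so $d_1(x,f^{-1}(D))\le R$ always, and on the far region $d_2(f(x),D)\ge\alpha\ge(\alpha/R)\,d_1(x,f^{-1}(D))$. Taking $C=\max\{1/c_2,\ R/\alpha\}$ gives $d_1(x,f^{-1}(D))\le C\, d_2(f(x),D)$ for all $x\in\mathcal{A}_1$, which (after renaming $C$ as the "$c$" in the statement) is exactly what is claimed.

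The only genuinely delicate point is making sure $f^{-1}(D)$ is nonempty (so that the distances $d_1(x,f^{-1}(D))$ are well-defined and finite) and that it is closed in $\mathcal{A}_1$ — both follow immediately: $D$ closed and $f$ continuous give $f^{-1}(D)$ closed, and in the application $\mathcal{A}_1$ is a compact space of convex bodies, $f$ is the volume product, and $D$ is the set of conjectured minimizers, which is manifestly in the image. I would state these as a one-line preliminary observation. Beyond that, the argument is purely point-set-topological and involves no computation; the compactness of $\mathcal{A}_1$ does all the work, first to extract the convergent subsequence in part (1), and second to bound $d_1(\cdot,f^{-1}(D))$ uniformly in part (2). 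I do not anticipate a real obstacle — the main thing to be careful about is bookkeeping the constants and handling the two regimes of part (2) cleanly.
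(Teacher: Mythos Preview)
Your proof is correct and is exactly the natural compactness-and-contradiction argument one would expect. Note, however, that the paper does not actually present a proof of this lemma: it merely states it and cites \cite{FHMRZ} for the proof, so there is nothing to compare against here. Your argument is presumably close to what appears in that reference, as the lemma is elementary and there is essentially only one way to prove it.
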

 Together with a local minima result (for example Theorem \ref{JKim}),  Lemma \ref{lem:metric} gives almost immediately a stability result for known bounds of the volume product. Let us illustrate this technique in the case of  symmetric convex bodies in $\R^3$.
\begin{thm}\label{thm:stability_BM}
There exists an absolute constant $C>0$, such that for every symmetric convex body $K \subset \R^3$ and $\delta>0$  satisfying
$\Pp(K) \leq (1+ \delta)\Pp(B_\infty^3)$, one has
$$
\min\{ d_{BM}(K, B_\infty^3), d_{BM}(K, B_1^3)\} \le 1+C\delta.
$$
\end{thm}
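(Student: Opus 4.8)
The plan is to derive Theorem~\ref{thm:stability_BM} as a quick consequence of the local stability result for Hanner polytopes in $\R^3$ together with the abstract metric Lemma~\ref{lem:metric}. Recall that in $\R^3$ there are exactly two Hanner polytopes up to linear isomorphism, namely the cube $B_\infty^3$ and the cross-polytope $B_1^3$, so the statement is precisely a stability estimate around the (two-point) set of Hanner polytopes.

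First I would set up the metric spaces: let $\mathcal A_1$ be the space of symmetric convex bodies in $\R^3$ modulo linear isomorphism, equipped with the metric $\log d_{BM}(\cdot,\cdot)$; this is a compact metric space by the symmetric case of John's theorem (as recalled in Section~\ref{sec1980}, one may assume $B_2^3\subset K\subset\sqrt3\,B_2^n$). Let $\mathcal A_2=\R$ with the usual metric, and let $f(K)=\Pp(K)$, which is continuous for the Banach-Mazur topology by affine invariance and continuity of the volume product. Take $D=\{\Pp(B_\infty^3)\}$, so that $f^{-1}(D)$ is exactly the class of Hanner polytopes (the conjectured minimizers, assuming Conjecture~\ref{conjcube} in dimension $3$, which is Theorem proved by Iriyeh-Shibata and reproved in the excerpt). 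By Theorem~\ref{JKim}, there are constants $\delta(3),c(3)>0$ such that if $\log d_{BM}(K,f^{-1}(D))$, i.e.\ $\min\{\log d_{BM}(K,B_\infty^3),\log d_{BM}(K,B_1^3)\}$, is positive but at most $\delta(3)$, then $\Pp(K)\ge(1+c(3)\,\log d_{BM}(K,f^{-1}(D)))\Pp(B_\infty^3)$ (after replacing $\delta$ by $\log(1+\delta)\asymp\delta$ for small $\delta$). This is exactly the hypothesis of part~(2) of Lemma~\ref{lem:metric} with $d_2(f(K),D)=\Pp(K)-\Pp(B_\infty^3)$, which yields a global linear bound $\log d_{BM}(K,f^{-1}(D))\le C'(\Pp(K)-\Pp(B_\infty^3))$ for all symmetric $K\subset\R^3$; combined with part~(1) to handle the range where $\log d_{BM}$ is not small, one gets $\min\{d_{BM}(K,B_\infty^3),d_{BM}(K,B_1^3)\}\le 1+C\delta$ whenever $\Pp(K)\le(1+\delta)\Pp(B_\infty^3)$, which is the claim.

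The main obstacle is the interface between the \emph{local} estimate of Theorem~\ref{JKim}, which is stated only for $\delta\le\delta(n)$, and the \emph{global} conclusion we want for all $\delta>0$. This is precisely what Lemma~\ref{lem:metric} is designed to bridge: compactness of $\mathcal A_1$ guarantees that outside the neighborhood of radius $\delta(3)$ of the Hanner class, $\Pp(K)-\Pp(B_\infty^3)$ is bounded below by a positive constant (part~(1)), so the global linear estimate follows by taking $C$ large enough. One subtle point to check carefully is that $f^{-1}(D)$ is genuinely the whole Hanner class and nothing more — that is, that a symmetric convex body in $\R^3$ with $\Pp(K)=\Pp(B_\infty^3)$ must be a Hanner polytope; this is the equality case of Conjecture~\ref{conjcube} in dimension $3$, established in the work of Iriyeh-Shibata (and revisited in \cite{FHMRZ}). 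Granting that, the argument is essentially a formal application of the lemma, and the constant $C$ is absolute because $\delta(3)$ and $c(3)$ are.
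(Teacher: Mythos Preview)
Your proposal is correct and follows essentially the same strategy as the paper: combine the local stability result of Theorem~\ref{JKim}, the characterization of the equality case in Mahler's conjecture in $\R^3$, and the abstract compactness Lemma~\ref{lem:metric}. The only difference is cosmetic: you work directly in the Banach--Mazur compactum with the metric $\log d_{BM}$, whereas the paper realizes $\mathcal A_1$ as the set of bodies in John position $B_2^3\subset K\subset\sqrt{3}B_2^3$ equipped with the Hausdorff metric, and then passes back to $d_{BM}$ via the comparison~\eqref{eq:dist}. Your route is slightly cleaner in that it avoids this translation step; the paper's route has the minor advantage of making compactness and continuity of $\Pp$ entirely elementary. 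Note also that part~(2) of Lemma~\ref{lem:metric} already yields the global bound, so your invocation of part~(1) is redundant.
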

\begin{proof} Using the linear invariance of the volume product and John's theorem,  we reduce to the case   $B_2^3\subseteq K \subseteq \sqrt{3} B_2^3$. Our metric space ${\mathcal A}_1$ will be  the set of such bodies  with the Hausdorff metric $d_H$. Let ${\mathcal A}_2=\R$.  Then $f:{\mathcal A}_1 \to {\mathcal A}_2$, defined by  $f(K)=\VP(K)$,  is continuous on ${\mathcal A}_1$ (see for example \cite{FMZ}).  Finally, let $D=\VP(B_\infty^3)$. From the description of the equality  cases (i.e. that $K$ or $K^*$ must be  a parallelepiped) proved in \cite{IS1, FHMRZ} we get
\begin{align*}
f^{-1}(D)&=\{K\in {\mathcal A}_1; \Pp(K)=\Pp(B_\infty^3)\}\\ &=\{K\in {\mathcal A}_1; K=S B_\infty^3\ \hbox{or}\ K=\sqrt{3}SB_1^3, \mbox{ for some } S\in {\rm SO}(3) \}.
\end{align*}
Note that $B_\infty^3$ is in John position (see for example \cite{AGM1}) and thus  if $ B_2^3 \subset T B_\infty^3 \subset \sqrt{3}B_2^3$ for some $T \in GL(3)$, then $T \in SO(3)$.

Next,we show that the assumptions in the  second part of Lemma \ref{lem:metric} are satisfied.  Since $d_{BM}(K^*, L^*)= d_{BM}(K, L)$,  we may restate the $\R^3$  version of Theorem \ref{JKim}  in the following form: there are absolute constants $c_1, c_2 >0$ such that  for every  symmetric convex body $K$ in $\R^3$ satisfying
$\min\{ d_{BM}(K, B_\infty^3), d_{BM}(K, B_1^3)\}:=1+d \le  1+c_1,$
one has
$
\VP(K) \ge  \VP(B_\infty^3)+ c_2 d.
$
To finish checking the assumption, note that for all $K,L$ convex bodies such that $B_2^3\subseteq K,L \subseteq \sqrt{3} B_2^3$, one has:
\begin{eqnarray}\label{eq:dist}
d_{BM}(K, L)-1 \le   \min_{T\in GL(3)} d_H (TK, L) \le  \sqrt{3}(d_{BM}(K, L) -1 ). 
\end{eqnarray}
Applying Lemma \ref{lem:metric}, we deduce that there exists $c>0$ such that if $B_2^3\subseteq K \subseteq \sqrt{3} B_2^3$, then
\[
\min_{S\in SO(3)}\min(d_H(K,SB_\infty^3), d_H(K,S\sqrt{3}B_1^3))\le c|\VP(K)-\VP(B_\infty^3)|.
\]
Using (\ref{eq:dist}) we conclude the proof.

\end{proof}

\section{Asymptotic estimates and Bourgain-Milman's theorem} \label{AE}

If Conjecture 2 holds true for centrally symmetric bodies $K$, then  one has 
$$ \frac{4} {n!^{\frac{1}{n} } }  \le \VP(K)^{\frac{1}{ n} }\le \frac{\pi} {  \Gamma(1+\frac{n}{2}  )^{\frac{2}{n}} }\ ,$$
so that 
 \begin{equation}\label{BM}
 \frac{4e+o(1)}{n}\le \VP(K)^{\frac{1}{n}} \le  \frac{2e\pi+o(1)}{n}.
 \end{equation}
Similarly, the truth of Conjecture \ref{mahler} would imply that for any convex body $K$, one has   
\[\P(K)^\frac{1}{n}\ge \VP(\Delta_n)^\frac{1}{n}\ge  \frac{e^2+o(1)}{n}.
\]
 So that the function $K\mapsto n\VP(K)^{\frac{1}{ n} }$ would vary between two positive constants.
This last fact was actually proved  by Bourgain and Milman \cite{BM} in 1986. Indeed,  the  upper bound is insured by the  Blaschke-Santal\'o inequality. For the lower bound, the first important step  was done by Gordon and Reisner \cite{GR}, who proved that 
 $$\VP(K)^{\frac{1}{ n} }\ge \frac{c}{n\log(n)}\ .$$
 Then, Bourgain and Milman \cite{BM}  proved that 
 \begin{equation}\label{BourgMil}
 \VP(K)^{\frac{1}{n}}\ge \frac{c}{n}.
\end{equation}
For the original proof of (\ref{BourgMil})  and other proofs of the same type, see  \cite{BM, LMi, Pi}. The constant $c$ obtained in those proofs was not at all explicit, and even if so, 
was quite small. After having given a low technology proof of Gordon-Reisner result \cite{Ku1}, G. Kuperberg \cite{Ku2} gave another proof of \eqref{BourgMil} based on differential geometry, and got the explicit constant $c=\pi e$ in \eqref{BourgMil} in the symmetric case, which is not far from the best possible bound  $4e$ and  is the best constant known for now. The best constant in the general (i.e not necessary symmetric) case may be obtained using Rogers-Shephard inequality, see the end of this section.  Using Fourier transform techniques, other proofs were given by Nazarov \cite{Na} (see also  
 Blocki  \cite{Blo1, Blo2}, Berndtsson\cite{Be1, Be2} and Mastroianis  and Rubinstein \cite{MaR}).  Giannopoulos, Paouris and Vritsiou gave also a proof using classical techniques of the  local theory of Banach spaces  \cite{GPV}.
 
 The   isomorphic version of the lower bound in (\ref{BM})  is "the best possible step" one can make, before actually proving (or disproving) the Mahler conjecture.  Indeed, assume we can achieve an asymptotic behavior better than $\VP(K) \ge c^n \VP(B_\infty^n)$, $0<c<1$,    i.e. we have  
\begin{equation}\label{BMimprove}
\alpha(n)\VP(B_\infty^n) \le \VP(K), \mbox{ and } \lim\limits_{n\to\infty}\alpha(n)/c^n=\infty,
\end{equation}
but there is a dimension, say $l$, such that the Mahler conjecture is false in $\R^l$, i.e.  there exists a convex symmetric body $K \subset \R^l$ such that $\VP(K) < \VP(B_\infty^l)$ or
$$
\VP(K) \le c_2 \VP(B_\infty^l), \mbox{ for some  } 0<c_2<1.
$$
 Let  $K'$  to be the $m$-th  direct sum of copies of $K$, $K'=K\oplus  \dots \oplus K\subset \R^{n}$, $n=ml$,     using the direct sum formula  and  (\ref{BMimprove}) inequality we get
$$
\alpha(lm)\VP(B_\infty^{l m})\le\VP(K')=\VP(K\oplus  \dots \oplus K) \le c_2^m \VP(B_\infty^{l m})=(c_2^{1/l})^{lm} \VP(B_\infty^{l m}).
$$
 This yields $\alpha(n)\le c^n$ for $n=ml$ and $c=c_2^{1/l}$,  and  we get a contradiction  for $m$ big enough with $\lim\limits_{n\to\infty}\alpha(n)/c^n=\infty$.

 We note that (\ref{BourgMil}) for  general convex bodies follows (with a constant divided by two) from the symmetric case. Indeed,  let $L$ be a convex body in $\R^n$ and let $z\in \inte(L)$.  Let $K=L-z$. Then by the Rogers-Shephard inequality \cite{RS}, $\vol(\frac{K-K}{2})\le 2^{-n}\binom{2n}{n} \vol(K)\le 2^n \vol(K)$ and 
\[
\vol\left(\left(\frac{K-K}{2}\right)^*\right)=\frac{1}{n}\int_{S^{n-1}}\!\!\left(\frac{h_K(u)+h_{-K}(u)}{2}\right)^{-n}\!\!\!d\sigma(u)\le \frac{1}{n}\int_{S^{n-1}}\!\!h_K(u)^{-n}d\sigma(u)=\vol(K^*).
\]
It follows that
 $$
 \vol(K) \vol(K^*)\ge 2^{-n}\VP\left(\frac{K-K}{2}\right).
 $$
 Since this holds for every $z\in\inte(L)$, it follows that 
 $\VP(L)\ge 2^{-n} \VP\left(\frac{L-L}{2}\right)$.
From this relation and Kuperberg's best bound $c=\pi e$ in \eqref{BourgMil} for  symmetric bodies, it follows that for general convex bodies,  \eqref{BourgMil} holds with $c=\pi e/2$.

 \subsection{Approach via Milman's quotient of subspace's theorem} 

 The next lemma is a consequence of the Rogers-Shephard inequality \cite{RS}.

 \begin{lemma}\label{RS}
 Let $K$ be a convex symmetric body in $\R^n$, let $E$ be an $m$-dimensional subspace of $E$ and $E^{\perp}$ be its orthogonal subspace. Then
 $$  \binom{n}{m}^{-2}\VP(K\cap E)  \VP(K\cap E^{\perp})\le   \VP(K)\le \VP(K\cap E)  \VP(K\cap E^{\perp}).$$
  \end{lemma}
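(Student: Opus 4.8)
The plan is to prove the two inequalities separately, both by exploiting the decomposition of $K^*$ coming from the orthogonal splitting $\R^n = E \oplus E^\perp$ together with the Rogers--Shephard inequality and its ``difference body''-type companion. Write $m = \dim E$ and recall the two basic facts from \cite{RS}: for any convex symmetric body $C$ in $\R^n$ and any subspace $F$ with $\dim F = k$, one has $\vol(C) \le \binom{n}{k}\vol(C \cap F)\,\vol(P_{F^\perp}C)$ (the Rogers--Shephard ``section--projection'' inequality), while the reverse bound $\vol(C)\ge \vol(C\cap F)\,\vol(P_{F^\perp}C)$ is Fubini (each slice of $C$ parallel to $F$ is contained in a translate of $C\cap F$, so $\vol(C)\le \vol(C\cap F)\vol(P_{F^\perp}C)$ — this last one is the elementary direction). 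The key observation, already used in the excerpt, is that for a symmetric body $(K\cap E)^* = P_E(K^*)$, with polarity taken inside $E$, and likewise $(K\cap E^\perp)^* = P_{E^\perp}(K^*)$.

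For the \emph{upper} bound $\VP(K)\le \VP(K\cap E)\,\VP(K\cap E^\perp)$: apply the elementary Fubini direction to $C = K^*$ and $F = E$ to get $\vol(K^*)\le \vol(K^*\cap E)\,\vol(P_{E^\perp}K^*) = \vol(K^*\cap E)\,\vol\big((K\cap E^\perp)^*\big)$. Then apply the Rogers--Shephard section--projection inequality to $C = K$ and $F = E$: $\vol(K)\le \binom{n}{m}\vol(K\cap E)\,\vol(P_{E^\perp}K)$ — but this carries the binomial factor, so instead I would run Fubini on $K$ with $F = E$ to get $\vol(K)\le \vol(K\cap E)\,\vol(P_{E^\perp}K)$ and Rogers--Shephard on $K^*$ the other way. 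One has to pair the slicing of $K$ with the projection of $K^*$ and vice versa so that no binomial coefficient appears. Concretely: $\vol(K)\le \vol(K\cap E)\,\vol(P_{E^\perp}K)$ and $\vol(K^*)\le \vol(K^*\cap E^\perp)\,\vol(P_E K^*)$; multiplying, $\VP(K)\le \big(\vol(K\cap E)\,\vol(P_E K^*)\big)\big(\vol(K^*\cap E^\perp)\,\vol(P_{E^\perp}K)\big)$. Now $\vol(P_E K^*) = \vol((K\cap E)^*)$ so the first factor is $\VP(K\cap E)$; and $\vol(K^*\cap E^\perp) = \vol((K\cap E^\perp)^*)$ while $\vol(P_{E^\perp}K)$ is not quite $\vol(K\cap E^\perp)$ — here is where one must be careful. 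The honest route for the upper bound is: $\vol(K)\le \vol(K\cap E)\vol(P_{E^\perp}K)$ (Fubini on $K$) and $\vol(K^*)\le \vol(K^*\cap E)\vol(P_{E^\perp}K^*) = \vol((K\cap E)^*)\vol((K\cap E^\perp)^*)$ (Fubini on $K^*$, then polarity), and separately $\vol(P_{E^\perp}K)\le \binom{n}{m}\vol(K\cap E^\perp)$ won't help; rather one uses $\vol(P_{E^\perp}K)\cdot\vol(K\cap E) \le \VP$-type bookkeeping. I would follow the argument actually structured so that the \emph{projection--section} Rogers--Shephard inequality $\vol(C\cap F)\vol(P_{F^\perp}C)\le \vol(C)\le\binom{n}{k}\vol(C\cap F)\vol(P_{F^\perp}C)$ is applied once to $K$ with $F=E$ and once to $K^*$ with $F=E^\perp$, using $(K\cap E)^*=P_EK^*$, $(K\cap E^\perp)^*=P_{E^\perp}K^*$ to convert. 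The main obstacle is exactly this bookkeeping: matching ``section of $K$'' with ``projection of $K^*$'' in the right order so that the upper bound has no binomial factor and the lower bound has $\binom{n}{m}^{-2}$, using the one-sided Rogers--Shephard inequality in the direction that loses the binomial only twice.

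For the \emph{lower} bound $\binom{n}{m}^{-2}\VP(K\cap E)\VP(K\cap E^\perp)\le \VP(K)$: use the Rogers--Shephard section--projection inequality in the direction $\vol(C)\le \binom{n}{\dim F}\vol(C\cap F)\vol(P_{F^\perp}C)$ with $C=K^*$, $F=E$: $\vol(K^*)\le \binom{n}{m}\vol(K^*\cap E)\vol(P_{E^\perp}K^*) = \binom{n}{m}\vol((K\cap E)^*)\vol((K\cap E^\perp)^*)$, hence $\vol(K^*) \le \binom{n}{m}\vol((K\cap E)^*)\vol((K\cap E^\perp)^*)$; wait, this gives an upper bound on $\vol(K^*)$, not a lower one. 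Instead, to bound $\VP(K)$ from below I want lower bounds on $\vol(K)$ and $\vol(K^*)$. From Fubini, $\vol(K)\ge$ nothing useful; rather use $\vol(K)\ge \vol(K\cap E)\vol(P_{E^\perp}K) \ge \binom{n}{m}^{-1}\vol(K\cap E)\vol(K\cap E^\perp)$ where the last step is Rogers--Shephard applied to $K$ with $F=E^\perp$: $\vol(P_{E^\perp}K)\ge \binom{n}{m}^{-1}\vol(K)/\vol(K\cap E^\perp)$ — circular again. The clean statement to invoke is simply: for symmetric $C$, $\binom{n}{k}^{-1}\vol(C)\le \vol(C\cap F)\vol(P_{F^\perp}C)\le \vol(C)$. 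Apply this to $C=K$, $F=E$ on the left: $\vol(K\cap E)\vol(P_{E^\perp}K)\le \vol(K)$, useless for lower bound. Apply to $C=K$, $F=E$: $\binom{n}{m}^{-1}\vol(K)\le\vol(K\cap E)\vol(P_{E^\perp}K)$. Hmm, that's an upper bound on $\vol(K\cap E)\vol(P_{E^\perp}K)$ multiplied — no, it's a lower bound on that product. So the correct combination for the lower bound of $\VP(K)$: $\VP(K)=\vol(K)\vol(K^*)\ge \big(\vol(K\cap E)\vol(P_{E^\perp}K)\big)\cdot\big(\vol(K^*\cap E^\perp)\vol(P_EK^*)\big)$ using $\vol(K)\ge\vol(K\cap E)\vol(P_{E^\perp}K)$? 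That inequality is false in general; Fubini gives $\vol(K)\le\vol(K\cap E)\vol(P_{E^\perp}K)$. So I must use Rogers--Shephard: $\vol(K)\ge \binom{n}{m}^{-1}\vol(K\cap E)\vol(P_{E^\perp}K)$ is also false. The true inequality chain is $\vol(P_{E^\perp}K)\le\binom{n}{m}\vol(K\cap E^\perp)$ (projecting a symmetric body to an $(n-m)$-dim subspace: the projection is contained in $\binom{n}{m}$-fold dilate... actually $\vol(P_FC)\le\binom{n}{k}\vol(C\cap F^\perp)^{?}$). I will instead cite the Rogers--Shephard inequalities cleanly as stated in the literature and organize: \emph{upper bound} from $\VP(K)=\vol(K)\vol(K^*)\le[\vol(K\cap E)\vol(P_{E^\perp}K)][\vol(K^*\cap E)\vol(P_{E^\perp}K^*)]/(\text{cross terms})$ — the main obstacle, which I flag now, is assembling the four Fubini/Rogers--Shephard inequalities in the unique order that produces a clean $\binom{n}{m}^{-2}$ and $1$ without stray factors; once the right pairing is found (section of $K^*$ inside $E$ matched with projection of $K$ onto $E^\perp$, and the Rogers--Shephard inequality $\vol(C)\le\binom{n}{m}\vol(C\cap E)\vol(P_{E^\perp}C)$ applied to $C=K$ and to $C=K^*$), the verification is a two-line computation using $(K\cap E)^*=P_E K^*$ and $(K\cap E^\perp)^*=P_{E^\perp}K^*$.
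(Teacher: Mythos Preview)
Your instinct is right: the ingredients are the Rogers--Shephard section--projection inequality
\[
\vol_n(C)\le \vol_k(C\cap F)\,\vol_{n-k}(P_{F^\perp}C)\le \binom{n}{k}\vol_n(C)
\]
for symmetric $C$, together with the duality $(K\cap E)^*=P_EK^*$. But you never actually complete the pairing, and there is a reason for that: the lemma \emph{as literally stated}, with two sections $K\cap E$ and $K\cap E^\perp$, does not drop out of the Rogers--Shephard inequality in the way you (and the paper) suggest. What does drop out cleanly is the version with one section and one projection:
\[
\binom{n}{m}^{-2}\,\VP(K\cap E)\,\VP(P_{E^\perp}K)\ \le\ \VP(K)\ \le\ \VP(K\cap E)\,\VP(P_{E^\perp}K).
\]
Indeed, apply the left half of Rogers--Shephard to $K$ with $F=E$ and to $K^*$ with $F=E^\perp$:
\[
\vol(K)\le\vol(K\cap E)\vol(P_{E^\perp}K),\qquad \vol(K^*)\le\vol(K^*\cap E^\perp)\vol(P_EK^*).
\]
Multiplying and using $(K\cap E)^*=P_EK^*$, $(P_{E^\perp}K)^*=K^*\cap E^\perp$ gives the upper bound. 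The right half of Rogers--Shephard applied the same way gives the lower bound with the factor $\binom{n}{m}^{-2}$.

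The paper's statement with $K\cap E^\perp$ in place of $P_{E^\perp}K$ appears to be a slip; note that the \emph{second} application in the Pisier argument (the line with $P_F(K\cap E)$ and $K\cap E\cap F^\perp$) already uses one projection and one section, consistent with the corrected form. Your repeated attempts to force $\vol(K\cap E^\perp)$ into the estimate fail precisely because there is no two-sided bound between $\vol(K)$ and $\vol(K\cap E)\vol(K\cap E^\perp)$ alone (a long thin rectangle along the diagonal in $\R^2$ shows $\vol(K)$ can be arbitrarily large while both axial sections stay bounded). So the ``right pairing'' you are looking for does not exist for the stated formulation; once you replace $K\cap E^\perp$ by $P_{E^\perp}K$, the two-line computation you promised is exactly the one above.
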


The following result is the {\it quotient of subspace theorem} of V. Milman (\cite{Mi}, see \cite{Gor} for a simple proof).
\begin{thm}\label{Milman} Let  $K$ be a convex symmetric body in $\R^n$, with $n$ a multiple of $4$. Then, there exists a constant $c>0$, independent on $n$, a $\frac{n}{2}$-dimensional subspace $E$ of $\R^n$, a $\frac{n}{4}$-dimensional subspace $F$ of $E$ and an ellipsoid ${\mathcal E}\subset F$ such that
  $$  {\mathcal E}\subset  P_F(E\cap K) \subset   c {\mathcal E}$$
  where, as before,  $P_F$ is the orthogonal projection onto $F$.
    \end{thm}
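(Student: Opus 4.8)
The statement is a geometric form of V.~Milman's \emph{quotient of subspace} (QS) theorem \cite{Mi}: a section of $K$ followed by a projection --- equivalently the unit ball of a quotient of a subspace of the normed space with unit ball $K$ --- can be made $C$-isomorphic to an ellipsoid while retaining a fixed proportion of the dimension. The plan is to prove this QS principle and then read off the particular instance $\dim E=n/2$, $\dim F=n/4$, which is comfortably within reach since one is allowed to lose constant proportions of the dimension at each stage. The two analytic inputs are: (i) the \emph{low-$M^*$ estimate} --- for a symmetric convex body $L$ in $\R^m$ and $k\le m$, a random $k$-dimensional subspace $G$ satisfies $L\cap G\supseteq \frac{c\sqrt{1-k/m}}{M^*(L)}\,(B_2^m\cap G)$, where $M^*(L)=\int_{S^{m-1}}h_L\,d\sigma$; and (ii) Pisier's estimate on the $K$-convexity constant, which produces a linear position of $L$ (the $\ell$-position) in which $M(L)\,M^*(L)\le C\log\!\big(d(L,B_2^m)+1\big)$, with $M(L)=\int_{S^{m-1}}\|\cdot\|_L\,d\sigma$ and $d(\cdot,\cdot)$ the Banach--Mazur distance \cite{Pi}. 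A convenient route to~(i), with explicit constants, is Gordon's Gaussian min--max method \cite{Gor}.

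\textbf{One step.} Given $L$ in $\R^m$ in its $\ell$-position, apply~(i) to obtain a subspace $E_0$ of proportional dimension with $L\cap E_0\supseteq r\,(B_2^m\cap E_0)$, $r\gtrsim 1/M^*(L)$; one may also ask that the random $E_0$ satisfy $M(L\cap E_0)\le C\,M(L)$, which holds with high probability by concentration. Now apply~(i) inside $E_0$ to $(L\cap E_0)^{*}$ (polar taken in $E_0$): there is $F_0\subseteq E_0$ of proportional dimension with $(L\cap E_0)^{*}\cap F_0\supseteq \rho\,(B_2^m\cap F_0)$, $\rho\gtrsim 1/M^*\!\big((L\cap E_0)^{*}\big)=1/M(L\cap E_0)$. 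Polarizing inside $F_0$ and using the identity $(A\cap F_0)^{*_{F_0}}=P_{F_0}(A^{*})$ with $A=(L\cap E_0)^{*}$ gives $P_{F_0}(L\cap E_0)\subseteq \rho^{-1}(B_2^m\cap F_0)$, while projecting the lower bound gives $P_{F_0}(L\cap E_0)\supseteq P_{F_0}\big(r(B_2^m\cap E_0)\big)\supseteq r\,(B_2^m\cap F_0)$. Thus the Euclidean ball $\mathcal{E}_0:=r(B_2^m\cap F_0)$ satisfies $\mathcal{E}_0\subseteq P_{F_0}(L\cap E_0)\subseteq d_0\,\mathcal{E}_0$ with $d_0\le C\,M(L\cap E_0)\,M^*(L)\le C'\,M(L)\,M^*(L)\le C''\log\!\big(d(L,B_2^m)+1\big)$ by~(ii). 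So one section-and-projection replaces the distance to the Euclidean ball by roughly its logarithm.

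\textbf{Iteration.} Start from $K$ in its $\ell$-position and iterate the step, each time bringing the current quotient of a subspace back to its own $\ell$-position before reapplying it. The distances obey $d_k\le C\log(d_{k-1}+1)$, so after $O(\log^{*}n)$ rounds $d_k$ falls below an absolute constant. While $d_{k-1}$ is still large one spends only a small codimension $\lambda_k$ in that round --- it suffices, say, to keep $d_k\le\sqrt{d_{k-1}}$, which costs $\lambda_k\lesssim \log(d_{k-1})/\sqrt{d_{k-1}}$ --- so that $\sum_k\lambda_k$ over these rounds is small, and only a bounded number of further rounds, each spending a fixed proportion, are needed to finish. Bookkeeping the dimension through the whole process leaves a proportion of $n$; to land on the exact shape of the statement one first performs a single low-$M^*$ step with $k=n/2$ (here $\sqrt{1-k/n}$ is a constant, so $K\cap E\supseteq r(B_2^n\cap E)$ with $r\gtrsim 1/M^*(K)$) and then runs the QS iteration inside the $n/2$-dimensional space $E$, producing $F\subseteq E$ with $\dim F=n/4$ and $\mathcal{E}\subseteq P_F(K\cap E)\subseteq c\,\mathcal{E}$.

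\textbf{Main obstacle.} A single step only yields $d\lesssim M\,M^*$, and $M\,M^*$ is genuinely of order $\log n$ for bodies such as $B_\infty^n$ in any position; the entire force of the theorem lies in the feedback built on Pisier's inequality $M\,M^*\lesssim \log(d+1)$, which lets each round replace the current distance by its logarithm and thus collapses it to a constant in only $\log^{*}n$ rounds. The delicate part is purely quantitative: choosing the codimensions $\lambda_k$ so that the distances shrink fast enough while the dimension that survives all $\log^{*}n$ rounds stays above $n/2$ --- i.e.\ balancing the iterated-logarithm decay of $d_k$ against the arithmetic of $\sum_k\lambda_k$.
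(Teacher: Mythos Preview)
The paper does not prove this theorem; it quotes it as V.~Milman's quotient of subspace theorem, with references to \cite{Mi} and to \cite{Gor} for a simple proof, and then uses it as a black box in Pisier's proof of the Bourgain--Milman inequality. Your sketch is a correct outline of the standard argument --- the Milman iteration driven by the low-$M^*$ estimate together with Pisier's bound $M(L)M^*(L)\le C\log\big(d(L,B_2^m)+1\big)$ in the $\ell$-position --- and your route to the low-$M^*$ estimate via Gordon's Gaussian min--max is exactly the simplification the paper's citation of \cite{Gor} points to.

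One minor correction to your last paragraph: first sectioning to $E$ with $\dim E=n/2$ and then running the QS iteration on $K\cap E$ inside $E$ does not produce $P_F(K\cap E)$; it produces $P_F\big((K\cap E)\cap E'\big)=P_F(K\cap E')$ for some $E'\subsetneq E$, because the iteration itself spends section-dimension, so you end with $\dim E'<n/2$. The clean fix is to run the whole iteration on $K$ from the start and track the two losses separately. Using the identity $P_G\big(P_F(K\cap E)\cap H\big)=P_G(K\cap E'')$ for $G\subset H\subset F\subset E$ with $\dim E''=\dim E-(\dim F-\dim H)$, one sees that after collapsing all rounds into a single $P_{F_{\mathrm{fin}}}(K\cap E_{\mathrm{fin}})$, the quantity $n-\dim E_{\mathrm{fin}}$ equals the total section loss and $\dim E_{\mathrm{fin}}-\dim F_{\mathrm{fin}}$ equals the total projection loss; since the section and projection proportions $(a_k,b_k)$ in each round are independent parameters in your one-step bound $d_{\mathrm{new}}\lesssim \log(d_{\mathrm{old}}+1)/\sqrt{a_k b_k}$, these two totals can be tuned to $n/2$ and $n/4$ respectively.
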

    \noindent
{\it  The proof of Bourgain-Milman's theorem by Pisier \cite{Pi}.}
For a convex symmetric body $K\subset \R^n$, with $n$ multiple of $4$, let $a_n(K)=n\VP(K)^{\frac{1}{n}}$. Let $E$  and $F$ be  the subspaces of $\R^n$ 
  chosen in Theorem \ref{Milman}. By lemma \ref{RS}, 
  for some constant $d>0$ independent on $n$,  one has
 $$d\sqrt{a_{\frac{ n}{2}}( K\cap E) a_{\frac{ n}{2}}( K\cap E^{\perp})}    \le a_n(K)\le \sqrt{a_{\frac{ n}{2}}( K\cap E) a_{\frac{ n}{2}}( K\cap E^{\perp})} $$
and

$$
d\sqrt{a_{\frac{ n}{4}}( P_F(K\cap E)) a_{\frac{ n}{4}}( K\cap E\cap F^{\perp}} )       \le       a_{\frac{ n}{2}}( K\cap E) 
\le \sqrt{a_{\frac{ n}{4}}( P_F(K\cap E)) a_{\frac{ n}{4}}( K\cap E\cap F^{\perp}} ).
$$
Next, from Theorem \ref{Milman}, for some absolute constants $c',d'>0$, one has 
$$
c'\le a_{\frac{ n}{4}}( P_F(K\cap E))\le d'.
$$
It follows that   for some universal constant $c>0$, one has
\begin{equation}\label{fund}
a_n(K)\ge 
c \big(a_{\frac{ n}{2}}( K\cap E^{\perp})   \big)^{\frac{1}{2}} \big(a_{\frac{n}{4} }( K\cap E\cap F^{\perp}) \big)^{\frac{1}{4}}.
\end{equation}
Defining now for every $n\ge 1$, 
$$
a_n=\min\{a_m(L); 1\le m\le n, \mbox{ $L$ convex symmetric body in $ \R^m$} \}.
$$
Observing that $a_n>0$, one gets from (\ref{fund}) that 
\begin{equation}\label{fin}
a_n\ge c \big(a_n\big)^{\frac{1}{2}} \big(a_n\big)^{\frac{1}{4}}.
\end{equation}
Thus $a_n\ge c^4$.

 \subsection{Complex analysis approach} 
 Let us very briefly  discuss an approach via complex and harmonic analysis which was initiated by  Nazarov \cite{Na}. We will follow here a work of Berndtsson \cite{Be2}, which  is done via  functional inequalities (central for the next section). 
 
We will consider a special case of the Bergman spaces. Let $\psi: {\mathbb C}^n \to \R\cup\{+\infty\}$ be a convex function and $\Omega=\{(x,y)\in \R^{2n}: \psi(x+iy)<\infty\}.$ The {\it Bergman space} $A^2(e^{-\psi})$ is the Hilbert space of holomorphic functions $f$  on $\Omega$ such that 
$$\|f\|^2=\int_{\Omega} |f(x+iy)|^2 e^{-\psi(x+iy)} dxdy <\infty.
$$
 The (diagonal) Bergman kernel $B$ for $A^2(e^{-\psi})$ is defined as
 $$
 B(z)=\sup_{f \in A^2( e^{-\psi})} \frac{|f(z)|^2}{\|f\|^2}.
 $$
 Next, consider an even convex function $\phi:\R^n \to\R\cup \{+\infty\}$ such that $ e^{-\phi(x)}$ is integrable over $\R^n$. For  $\alpha \in {\mathbb C}$, consider the Bergman kernel $B_\alpha(z)$ corresponding to the function $\psi(z)=\phi({\rm Re}(z))+ \phi({\rm Re}(\alpha z))$.  The main theorem in \cite{Be2} is  the claim that
 \begin{equation}\label{berndtsson}
 B_i(0) \le c^n B_1(0),
 \end{equation}
 where $c$
is an absolute constant (precisely computed in \cite{Be2}). We note that $B_1$ is the Bergman kernel for $\psi(x+iy)=2\phi(x)$, i.e. independent of the ${\rm Im}(z)$, and that $B_i$ is the Bergman kernel for $\psi(x+iy)=\phi(x)+\phi(y)$.  It is essential to understand that the Bergman spaces corresponding to those densities are different. Thus the connection is not immediate. For example,  the function $f=1$  belongs to the second space but does not belong to the space corresponding to $\psi(x+iy)=2\phi(x)$. Using $f=1$ we get    
$$
B_i(0)\ge \frac{1}{\int_{\R^n} e^{-\phi(x)} dx \int_{\R^n} e^{-\phi(y)}dy}\ .
$$
Together with (\ref{berndtsson}) this gives 
 \begin{equation}\label{berndtssonM}
 B_1(0) \ge  \frac{c^{-n}}{\left(\int_{\R^n} e^{-\phi(x)} dx \right)^2}\ ,
 \end{equation}
 which is an essential estimate for proving the Bourgain-Milman inequality.
 The proof of (\ref{berndtsson}) in \cite{Be2} is based on a very nice and tricky approach of "linking" $B_i$ and $B_1$ via $B_\alpha$. Indeed. it turns out that $b(\alpha):=\log B_\alpha(0)$ is subharmonic in ${\mathbb C}$ (see \cite{Be0, Be2}), and moreover $b(\alpha) \le C+n \log|\alpha|^2, $ which can be seen from the change of variables
 \begin{align*}
 \|f\|_\alpha^2&=\int_{{\mathbb C}^n}|f(z)|^2  e^{-(\phi({\rm Re}(z)))+ \phi({\rm Re}(\alpha z)))} dz \\ &=|\alpha|^{-2n} \int_{{\mathbb C}^n}|f(z/\alpha)|^2  e^{-(\phi({\rm Re}(z/\alpha)))+ \phi({\rm Re}(z)))} dz.
 \end{align*}
  Thus $B_\alpha(0)=|\alpha|^{2n}B_{1/\alpha}(0).$ Moreover, $B_{1/\alpha}(0)$ is bounded  as $\alpha \to \infty.$   Thus one can apply the Poisson representation formula in the upper half plane to the function $b(\alpha)-n\log|\alpha|^2$ to get 
$$
\log B_i(0)=b(i)\le \frac{1}{\pi}\int_{-\infty}^\infty \frac{b(s) - n \log(s^2)}{1+s^2} ds=\frac{2}{\pi}\int_{0}^\infty \frac{b(s) - n \log(s^2)}{1+s^2} ds.
$$
Using that  $s\mapsto \phi(s x)$ is increasing on  $(0, 1]$ one has, for $s\in (0, 1]$, 
  $$
\|f\|_s^2=\int_{{\mathbb C}^n}|f(z)|^2  e^{-(\phi({\rm Re}(z))+ \phi({\rm Re}(s z))} dz \ge  \|f\|_1^2,
 $$
 and hence $b(s) \le b(1)$. If $s \ge 1$,
 $$
 \|f\|_s^2 =|s|^{-2n} \int_{{\mathbb C}^n}|f(z/s)|^2  e^{-(\phi({\rm Re}(z/\alpha))+ \phi({\rm Re}(z))} dz \ge s^{-2n}\|f\|_1^2,
 $$
 thus $b(s)\le b(1)+n\log s^2$. Putting those estimates together completes the proof of
 (\ref{berndtsson}).

 The next step is to adapt the Paley-Wiener space associated to a convex body (discussed in Theorem \ref{PW}) to the case of a convex function. For a convex function $\varphi: \R^n \to \R \cup \{+\infty\}$, we denote by $PW(e^\varphi)$ the space of holomorphic functions $f$ of the form 
 $$f(z)=\int_{\R^n} e^{\langle z,  \xi \rangle} \tilde{f}(\xi) d\xi, \hbox{  where }z \in {\mathbb C}^n, $$
 for which
 $$
 \|f\|_{PW}^2 =\int_{\R^n} |\tilde{f}|^2 e^{\varphi} dt < \infty,
 $$
 for some function $\tilde{f}$, so that the two formulas above make sense.
 The classical Paley-Wiener space discussed in  Theorem \ref{PW} then corresponds to the case when $\varphi(x)=0$ for $x\in K$ and $\varphi(x)=+\infty$ for $x \not \in K.$ For a convex function $\psi$ on $\R^n$, let us consider its logarithmic Laplace transform given by
 $$
 \Lambda \psi (\xi)= \log \int_{\R^n} e^{2\langle x, \xi \rangle} e^{-\psi}dx.
 $$
The second key ingredient in Berndtsson's proof is the fact that the spaces $PW(e^{\Lambda \psi})$ and $A^2(e^{-\psi})$ coincide and  that \begin{equation}\label{normequality}\|f\|_{A^2}^2=(2\pi)^{2n}\|f\|_{PW(e^{\Lambda \psi})}^2.
\end{equation}
This fact originates from the observation that any  $f\in PW(e^{\Lambda \psi})$ is the Fourier-Laplace transform of $\tilde{f}$ and $e^{\langle x, t \rangle}\tilde{f}(t)$ belongs to $L_2(\R^n)$ for all $x$ such that $\psi(x)<\infty$. Then, we apply Parseval's formula to get 
$$
\int_{\R^n}|f(x+iy)|^2 dy=(2\pi)^n \int_{\R^n} e^{2\langle x, t \rangle}|\tilde{f}(t)|^2dt.
$$
Multiplying the above equality by $e^{-\psi(x)}$ and integrating with respect to $x$, we get
$$
\int_{\R^n}\int_{\R^n}|f(x+iy)|^2e^{-\psi(x)}dxdy=(2\pi)^n \int_{\R^n} |\tilde{f}(t)|^2 e^{\Lambda \psi(t)}dt.
$$
Thus $f \in A^2(e^{-\psi})$ and the $A^2$ norm coincide with a multiple of the norm in $PW(e^{\Lambda \psi})$. This confirms that the Paley-Wiener space is isometrically embedded into the corresponding Bergman space and the rest follows from the observation that it is dense.

One can compute the Bergman kernel for $PW(e^{\Lambda \psi})$ and use (\ref{normequality}) to show that the Bergman kernel for $A^2(e^{-\psi})$ is equal to
\begin{equation}\label{PWestimate}
(2\pi)^{-n}\int_{\R^n} e^{2\langle x, t \rangle - \Lambda \psi(t)}dt.
\end{equation}
We will use (\ref{PWestimate}) to give an estimate from above  of the value of the Bergman kernel at zero.  The Legendre transform $\L\psi$ of  a function $\psi:\R^n\to\R\cup\{+\infty\}$ is defined by
\begin{equation}\label{legan}
\L\psi(y)=\sup_{x\in \R^n}(\langle x,y\rangle -\psi(x)),\quad \mbox{ for $y\in \R^n$}.
\end{equation}
Consider the Bergman  space $A^2(e^{-2\phi(x)}),$ where $\phi:\R^n \to \R \cup \{\infty\}$ is convex and even (as in (\ref{berndtssonM})). Then,
\begin{equation}\label{eqmain}
B(0) \le \pi^{-n}\frac{\int_{\R^n} e^{-\L\phi(y)}dy}{\int_{\R^n}e^{-\phi(x)}dx}.
\end{equation}
Indeed, using (\ref{PWestimate}) we get 
\begin{equation}\label{eqb0}
B(0) \le (2\pi)^{-n}\int_{\R^n} e^{ - \Lambda (2\phi(t))}dt.
\end{equation}
Note that, for any $y\in \R^n$, one has
\begin{align*}
    e^{  \Lambda (2\phi(t))}&=2^{-n}\int_{\R^n}e^{\langle t,u\rangle - 2 \phi(u/2) }du = 2^{-n} e^{\langle t,y\rangle} \int_{\R^n}e^{\langle t,v\rangle - 2 \phi(v/2+y/2) }dv\\
    &\ge 2^{-n} e^{\langle t,y\rangle-\phi(y)} \int_{\R^n}e^{\langle t,v\rangle -  \phi(v) }dv,
\end{align*}
where in the last inequality we used the convexity of $\phi$. Using that $\phi$ is  even, we get that
$$\int_{\R^n}e^{\langle t,v\rangle -  \phi(v) }dv \ge \int_{\R^n}e^{-  \phi(v) }dv$$
 and 
$$
e^{  \Lambda (2\phi(t))}\ge 2^{-n} e^{\langle t,y\rangle-\phi(y)} \int_{\R^n}e^{-  \phi(v) }dv.
$$
Taking the supremum over all $y\in \R^n$, we get 
$$
e^{  \Lambda (2\phi(t))}\ge 2^{-n} e^{\L\phi(t)} \int_{\R^n}e^{-  \phi(v) }dv.
$$
Together with (\ref{eqb0}), this gives (\ref{eqmain}). Combining (\ref{eqmain}) with (\ref{berndtssonM}), we get the following theorem, 
\begin{thm}\label{fbm}{\bf (Functional version of the Bourgain-Milman inequality)} Let $\phi:\R^n \to \R \cup \{+\infty\}$ be  even and convex; then for some $c>0$ independant on $n$, one has
$$
\int_{\R^n} e^{  - \phi(x)} dx  \int_{\R^n} e^{  - \L\phi(x)} dx\ge c^n.
$$
\end{thm}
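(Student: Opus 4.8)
The plan is to sandwich the value at the origin of the diagonal Bergman kernel of the weighted space $A^2(e^{-2\phi})$ (equivalently, $\psi(x+iy)=2\phi(x)$, so this is the kernel $B_1$ of the family $B_\alpha$) between $\bigl(\int_{\R^n}e^{-\phi}\bigr)^{-2}$ from below and $\int_{\R^n}e^{-\L\phi}\big/\int_{\R^n}e^{-\phi}$ from above, in each case up to a factor exponential in $n$; dividing the two estimates then isolates $\int_{\R^n}e^{-\phi}\int_{\R^n}e^{-\L\phi}$. So let $\phi:\R^n\to\R\cup\{+\infty\}$ be even and convex with $\int_{\R^n}e^{-\phi}<\infty$; if $\int_{\R^n}e^{-\L\phi}=+\infty$ there is nothing to prove, so we may assume it finite, and we write $B(0)=B_1(0)$ for the Bergman kernel of $A^2(e^{-2\phi})$ at the origin.

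The lower bound for $B(0)$ is precisely inequality \eqref{berndtssonM}. One feeds the elementary estimate $B_i(0)\ge\bigl(\int_{\R^n}e^{-\phi}\bigr)^{-2}$, obtained by testing the admissible function $f\equiv 1$ against the weight $e^{-(\phi(x)+\phi(y))}$ of the space $A^2(e^{-\psi})$ with $\psi(x+iy)=\phi(x)+\phi(y)$, into Berndtsson's comparison \eqref{berndtsson}, $B_i(0)\le c^nB_1(0)$, yielding $B(0)\ge c^{-n}\bigl(\int_{\R^n}e^{-\phi}\bigr)^{-2}$. The upper bound for $B(0)$ is inequality \eqref{eqmain}. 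Here one uses the isometric identification of $A^2(e^{-\psi})$ with the Paley--Wiener space $PW(e^{\Lambda\psi})$ via \eqref{normequality}, which produces the closed form $B(0)=(2\pi)^{-n}\int_{\R^n}e^{-\Lambda(2\phi)(t)}\,dt$ from \eqref{PWestimate}; then for each fixed $y$ the midpoint-convexity bound $2\phi\bigl(\tfrac{v+y}{2}\bigr)\le\phi(v)+\phi(y)$ together with the evenness of $\phi$ (which gives $\int_{\R^n}e^{\langle t,v\rangle-\phi(v)}\,dv\ge\int_{\R^n}e^{-\phi(v)}\,dv$ since $\cosh\ge 1$) shows $e^{\Lambda(2\phi)(t)}\ge 2^{-n}e^{\L\phi(t)}\int_{\R^n}e^{-\phi}$; taking the supremum over $y$ and substituting back gives $B(0)\le\pi^{-n}\int_{\R^n}e^{-\L\phi}\big/\int_{\R^n}e^{-\phi}$, where the Legendre transform $\L\phi$ is as in \eqref{legan}.

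Combining the two estimates,
\[
\frac{c^{-n}}{\bigl(\int_{\R^n}e^{-\phi}\bigr)^2}\ \le\ B(0)\ \le\ \frac{\pi^{-n}\int_{\R^n}e^{-\L\phi}}{\int_{\R^n}e^{-\phi}},
\]
and multiplying through by $\pi^n\bigl(\int_{\R^n}e^{-\phi}\bigr)^2$ yields $\int_{\R^n}e^{-\phi}\int_{\R^n}e^{-\L\phi}\ge(\pi/c)^n$, which is the asserted inequality with the absolute constant $\pi/c$.

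The genuinely deep ingredient is Berndtsson's inequality \eqref{berndtsson} invoked in the lower bound; everything else is a change of variables, the Paley--Wiener/Bergman isometry, and convexity. Were one to prove \eqref{berndtsson} from scratch rather than quote it, the obstacle would be showing that $b(\alpha)=\log B_\alpha(0)$ is subharmonic on $\Co$ — an instance of Berndtsson's theorem on the plurisubharmonic variation of Bergman kernels — after which the scaling identity $B_\alpha(0)=|\alpha|^{2n}B_{1/\alpha}(0)$ and the boundedness of $B_{1/\alpha}(0)$ as $\alpha\to\infty$ give the growth bound $b(\alpha)\le C+n\log|\alpha|^2$, and the Poisson representation in the upper half-plane applied to $b(\alpha)-n\log|\alpha|^2$, together with the monotonicity $b(s)\le b(1)$ for $s\in(0,1]$ and $b(s)\le b(1)+n\log s^2$ for $s\ge 1$, closes the estimate at $\alpha=i$.
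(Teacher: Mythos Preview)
Your proof is correct and follows exactly the paper's approach: the theorem is stated immediately after the paper has established \eqref{berndtssonM} and \eqref{eqmain}, and the proof is precisely the combination of these two bounds on $B(0)$ that you describe. Your summary of how each bound is obtained, and your identification of Berndtsson's subharmonicity result \eqref{berndtsson} as the deep input, are also accurate.
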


\begin{remark} Theorem \ref{fbm} was first proved, via Bourgain-Milman inequality for symmetric convex bodies in \cite{AKM} and then generalized to non-even functions in \cite{FM3}. It implies the classical Bourgain-Milman's inequality for convex bodies as we shall see in the next section (Remark \ref{bodyfun} below).
\end{remark}

\section{Functional inequalities and link with transport inequalities}\label{func}

We dedicate this section  to the study of functional inequalities related to volume product.

\subsection{Upper bounds}

The following general form of the functional Blaschke-Santal\'o inequality was proved by Ball \cite{Ba0} for $f$ even, by Fradelizi and Meyer \cite{FM4} for $f$ log-concave and by Lehec \cite{Le1} in the general case.  

\begin{thm}\label{thm:bs-func-gen}
Let $f: \R^n \to \R_+$ be Lebesgue integrable. There exists $z \in \R^n$ such that for any $\rho: \R_+ \to \R_+$ and any $g:\R^n \to \R_+$  measurable satisfying $$f(x+z)g(y) \leq \rho(\langle x,y \rangle)^2\mbox{  for all $x,y\in \R^n$ satisfying $\langle x,y \rangle >0$} ,$$
one has 
  $$
\int f(x)\,dx \int g(y)\,dy \leq \left(\int \rho(|x|^2)\,dx\right)^2 .
$$
If $f$ is even, one can take $z=0$.
\end{thm}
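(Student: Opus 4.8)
The plan is to deduce the functional statement from the geometric Blaschke-Santal\'o inequality (Theorem 1, or its symmetric version) by a lifting argument: encode the function $f$ as (a limit of) sections of a convex body in one higher dimension, apply the geometric inequality there, and read off the functional inequality from a computation of the volumes of the lifted body and its polar. First I would reduce to the case where $f$ is bounded and compactly supported (truncating $f$ at height $m$ and on a large ball, then letting $m\to\infty$ by monotone convergence), and then to the case where $f$ is of the form $e^{-\varphi}$ with $\varphi$ convex, since the hypothesis $f(x+z)g(y)\le\rho(\langle x,y\rangle)^2$ for $\langle x,y\rangle>0$ forces, after taking $g=g_f$ to be the largest admissible competitor, the relevant ``polar'' function to already be log-concave; in fact one shows that the extremal choice is $\rho(t)=e^{-t}$ and $g(y)=\inf_{x}\,\frac{e^{-2\langle x,y\rangle}}{f(x+z)}$, so the inequality to be proved becomes $\int e^{-\varphi}\int e^{-\mathcal L\varphi+\langle z,\cdot\rangle}\le$ (the spherically symmetric extremal value), which is the functional Blaschke-Santal\'o inequality of Ball/Artstein-Avidan-Klartag-Milman type.

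The heart of the matter is then the lifting. Given $\varphi:\R^n\to\R\cup\{+\infty\}$ convex (I may assume $\varphi(0)=0$ and that $0$ lies in the interior of $\dom\varphi$ after the translation absorbing $z$), for a large parameter $p$ consider the convex body
$$
K_p=\Bigl\{(x,s)\in\R^n\times\R^p:\ |s|^2\le \tfrac{1}{2\pi e}\,e^{-\varphi(x)/p}\Bigr\},
$$
so that $\vol_{n+p}(K_p)=\vol_p(B_2^p)(2\pi e)^{-p/2}\int_{\R^n}e^{-\varphi(x)/2}\,dx\cdot$(Gaussian-type normalization), and, using the identity $\int_{\R^p}e^{-p\|u\|^2/2}\,du=(2\pi/p)^{p/2}$, one checks that as $p\to\infty$ the normalized volume of $K_p$ converges to a constant multiple of $\int_{\R^n}e^{-\varphi}$. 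The key computation is that the polar body $(K_p)^*$ is, up to the same kind of normalization, the analogous body built from the Legendre transform $\mathcal L\varphi$: this is where one uses that for a product/radial construction, the support function in the $s$-directions is governed by $\langle x,y\rangle-\varphi(x)$, whose supremum over $x$ is exactly $\mathcal L\varphi(y)$. Applying $\VP(K_p)\le\VP(B_2^{n+p})$ and letting $p\to\infty$, the dimension-dependent constants telescope (using $\Gamma(1+d/2)^{2/d}\sim d/(2e)$) and one is left precisely with $\int e^{-\varphi}\int e^{-\mathcal L\varphi}\le (2\pi)^n$, with the correct spherically symmetric constant on the right.

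The main obstacle will be handling the Santal\'o point, i.e. producing the translation vector $z$ in the non-even case and checking $z=0$ suffices when $f$ is even. In the lifted picture, the freedom to translate $x\mapsto x-z$ corresponds exactly to choosing the Santal\'o point of $K_p$ in the $\R^n$-directions (the $s$-directions are already centered by symmetry of the construction in $s$), so one should take $z=z_p$ to be the appropriate coordinate of the Santal\'o point $s(K_p)$ and argue that $z_p$ converges (or passes to a subsequential limit) to a vector $z$ that works for $f$; continuity/compactness of the Santal\'o point under the relevant limits must be checked. When $f$ is even, $K_p$ is centrally symmetric, so $s(K_p)=0$ and $z=0$, giving the last sentence of the theorem. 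A second, more technical point is justifying the interchange of the limit $p\to\infty$ with the volume integrals and controlling the regularity of $\varphi$ (so that $\mathcal L\varphi$ is a proper convex function with $\int e^{-\mathcal L\varphi}<\infty$); this is routine but should be done by the truncation reduction above, after which everything in sight is finite and the bodies $K_p$ are honest convex bodies with $0$ in their interior. Finally, to recover the full strength of the stated theorem for arbitrary measurable $\rho$ and $g$ (not just the extremal pair), one notes that replacing $(g,\rho)$ by the extremal $(g_f,e^{-t})$ only increases the left-hand side and only decreases the right-hand side, so the general case follows from the extremal case.
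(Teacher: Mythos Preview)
Your proposal has a genuine gap: the reduction to the ``extremal pair'' $(g_f,\rho)=(\,\inf_x e^{-2\langle x,y\rangle}/f(x+z),\,e^{-t})$ does not work. The theorem is a family of inequalities, one for each pair $(\rho,g)$ satisfying the constraint; the right-hand side $\left(\int\rho(|x|^2)\,dx\right)^2$ depends on $\rho$, and there is no monotonicity sending an arbitrary admissible $(\rho,g)$ to $(e^{-t},g_f)$ while simultaneously increasing the left side and decreasing the right. For instance, $\rho=\mathbf{1}_{[0,1]}$ gives the geometric Blaschke--Santal\'o inequality for the level sets of $f$, with right-hand side $\vol(B_2^n)^2$; replacing $\rho$ by $e^{-t}$ changes the right-hand side to $\pi^n$, and there is no a~priori comparison. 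What your lifting argument actually establishes is Theorem~\ref{thm:bs-func} (the Legendre-transform case $\rho(t)=e^{-t/2}$), which is the Artstein--Klartag--Milman result and is strictly weaker than Theorem~\ref{thm:bs-func-gen}; the paper explicitly derives Theorem~\ref{thm:bs-func} \emph{from} Theorem~\ref{thm:bs-func-gen}, not the other way around.

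The paper's proofs avoid this reduction entirely and work with a general $\rho$ throughout. In the even case (Ball), one slices $f$ and $g$ into level sets $K_s=\{f\ge s\}$, $L_t=\{g\ge t\}$; the hypothesis gives $L_t\subset\rho^{-1}(\sqrt{st})K_s^*$, whence Blaschke--Santal\'o yields $\vol(K_s)\vol(L_t)\le(\rho^{-1}(\sqrt{st}))^n\vol(B_2^n)^2$, and a one-dimensional multiplicative Pr\'ekopa--Leindler (Lemma~\ref{lem:PLmult}) integrates this to the conclusion. In the log-concave non-even case (Fradelizi--Meyer), one replaces level sets by Ball's body $K_f(z)=\{x:\int_0^\infty r^{n-1}f(z+rx)\,dr\ge1\}$, shows via Lemma~\ref{lem:PLmult} that $K_g(0)\subset c_n(\rho)K_f(z)^*$, and then uses a Brouwer fixed-point argument to find $z$ making the centroid of $K_f(z)$ equal to $0$. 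Both arguments keep $\rho$ arbitrary, which is the whole point. Your proposed reduction of $f$ to log-concave is also unjustified: the constraint may force the optimal $g$ to be log-concave, but it says nothing about $f$ itself, and the full theorem (due to Lehec) covers arbitrary integrable $f$.
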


Applying this result to $\rho={\bf 1}_{[0,1]}$ and $f={\bf 1}_K$, one recovers the Blaschke-Santal\'o inequality for convex sets. 
 Applying it to $\rho(t)=e^{-t/2}$, it gives a proof of the following functional Blaschke-Santal\'o inequality for the Legendre transform due to Artstein, Klartag and Milman \cite{AKM} (and \cite{Le2} for another proof). 

\begin{thm}\label{thm:bs-func} Let $\varphi:\R^n\to\R\cup\{+\infty\}$ satisfy $0<\int e^{-\varphi}<+\infty$. If for $x,y\in \R^n$,  $\varphi_y(x):=\varphi(x+y)$,  there exists $z\in\R^n$ such that
  $$
\int_{\R^n}e^{-\varphi(x)}dx\int_{\R^n}e^{-\L(\varphi_z)(y)}dy\le\left(\int_{\R^n}e^{-\frac{|x|^2}{2}}dx\right)^2=(2\pi)^n,
$$
with equality if and only  $\varphi_z(x)=|Ax|^2$ for some  invertible linear map $A$ and some $z\in \R^n$.
\end{thm}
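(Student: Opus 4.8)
The statement is advertised as a consequence of Theorem~\ref{thm:bs-func-gen}, and indeed the whole proof consists in a judicious choice of the auxiliary data in that theorem. I would apply Theorem~\ref{thm:bs-func-gen} to the function $f(x)=e^{-\varphi(x)}$, which is nonnegative and, by hypothesis, Lebesgue integrable with $0<\int f<+\infty$. (If one wants, one may first replace $\varphi$ by its lower convex envelope $\L\L\varphi\le\varphi$; this only increases $\int e^{-\varphi}$ and does not change $\L\varphi$, so the inequality for the convexified function is the stronger one, and at the same time it makes the equality discussion meaningful.) Theorem~\ref{thm:bs-func-gen} then produces a point $z\in\R^n$ — which we fix once and for all — with the property that $\int f\,\int g\le\bigl(\int\rho(|x|^2)\,dx\bigr)^2$ for every $\rho:\R_+\to\R_+$ and every measurable $g:\R^n\to\R_+$ obeying the constraint $f(x+z)g(y)\le\rho(\langle x,y\rangle)^2$ on $\{\langle x,y\rangle>0\}$. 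The choice is $\rho(t)=e^{-t/2}$ and $g(y)=e^{-\L(\varphi_z)(y)}$, where $\varphi_z(x)=\varphi(x+z)$.

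\textbf{Verifying the constraint and computing the constant.} With these choices $f(x+z)g(y)=e^{-\varphi_z(x)-\L(\varphi_z)(y)}$, while $\rho(\langle x,y\rangle)^2=e^{-\langle x,y\rangle}$. The Fenchel--Young inequality for the Legendre transform \eqref{legan}, namely $\varphi_z(x)+\L(\varphi_z)(y)\ge\langle x,y\rangle$ for all $x,y\in\R^n$, is precisely what is needed: it gives $f(x+z)g(y)\le e^{-\langle x,y\rangle}=\rho(\langle x,y\rangle)^2$ not merely on $\{\langle x,y\rangle>0\}$ but on all of $\R^n\times\R^n$. (The measurability of $g$ is automatic since $\L(\varphi_z)$ is convex and lower semicontinuous; coercivity of $\varphi$ — forced by $\int e^{-\varphi}<+\infty$ — makes $\L(\varphi_z)$ finite on a set of positive measure, so $\int g>0$.) Since $\int_{\R^n}\rho(|x|^2)\,dx=\int_{\R^n}e^{-|x|^2/2}\,dx=(2\pi)^{n/2}$, Theorem~\ref{thm:bs-func-gen} yields
$$\int_{\R^n}e^{-\varphi(x)}\,dx\int_{\R^n}e^{-\L(\varphi_z)(y)}\,dy\le(2\pi)^n,$$
which is the asserted inequality. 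When $\varphi$ is even, $f=e^{-\varphi}$ is even, so the last sentence of Theorem~\ref{thm:bs-func-gen} lets us take $z=0$.

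\textbf{The equality case — the main obstacle.} The inequality itself is now essentially free; the real work is the characterization of equality, since Theorem~\ref{thm:bs-func-gen} as stated is only an inequality. Here I would argue that $g(y)=e^{-\L(\varphi_z)(y)}$ is the pointwise largest admissible competitor: any $g$ satisfying $e^{-\varphi_z(x)}g(y)\le e^{-\langle x,y\rangle}$ obeys $g(y)\le e^{\varphi_z(x)-\langle x,y\rangle}$ for every $x$, hence $g(y)\le e^{-\L(\varphi_z)(y)}$. Consequently equality in the displayed inequality is exactly equality in the functional Blaschke--Santal\'o inequality with the extremal pair $\bigl(e^{-\varphi_z},e^{-\L(\varphi_z)}\bigr)$, and one must trace tightness back through the proof of Theorem~\ref{thm:bs-func-gen} (the Cauchy--Schwarz / rearrangement steps in the Ball--Lehec argument). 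This forces the superlevel sets $\{\varphi_z\le t\}$ to be homothetic ellipsoids with the scaling dictated by the one-dimensional extremizers of $\rho(t)=e^{-t/2}$, i.e. $\varphi_z(x)=|Ax|^2$ for an invertible linear $A$; conversely such $\varphi_z$ gives $\L(\varphi_z)(y)=\tfrac14|A^{-\top}y|^2$ and a direct Gaussian computation checks equality. An alternative, perhaps cleaner, route to the equality case is to reduce to the already-settled geometric statement: pass from $e^{-\varphi_z}$ to the convex body $\{(x,t)\in\R^n\times\R:\ t\ge\varphi_z(x)\}$ intersected with slabs, or invoke the level-set/epigraph correspondence between functional and geometric Santal\'o, and then quote the equality case of the Blaschke--Santal\'o--Petty theorem \eqref{BSE}. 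I expect this equality analysis, rather than the inequality, to be the delicate part, and in a survey one may legitimately defer the details to \cite{AKM, Le2}.
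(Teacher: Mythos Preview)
Your proposal is correct and matches the paper's own derivation: immediately after stating Theorem~\ref{thm:bs-func-gen}, the paper remarks that applying it with $\rho(t)=e^{-t/2}$ yields Theorem~\ref{thm:bs-func}, and your write-up spells out precisely that argument (choosing $f=e^{-\varphi}$, $g=e^{-\L(\varphi_z)}$, verifying the constraint via Fenchel--Young). The paper also sketches two independent self-contained proofs --- the \cite{AKM} argument via the auxiliary bodies $K_m(f_m)\subset\R^{n+m}$ and Lehec's induction on dimension from \cite{Le2} --- but your route is the one the paper presents first, and your observation that the equality case is the only nontrivial remaining point (deferred in the survey to \cite{AKM, Le2}) is accurate.
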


\begin{remark}\label{rk:lehec-centered}
In \cite{Le2}, Lehec deduced from  Theorem \ref{thm:bs-func}  that if the "barycenter" $b(\varphi):=\int xe^{-\varphi(x)}dx/\int e^{-\varphi}$ satisfies $b(\varphi)=0$, then  
$$
\int e^{-\varphi}\int e^{-\L\varphi}\le(2\pi)^n.
$$ 
Indeed, for any $z$, one has $\L(\varphi_z)(y)=\L\varphi(y)-\langle y,z\rangle$. It follows that $\L((\L\varphi)_z)(y)=\L\L\varphi(y)-\langle y,z\rangle\le\varphi(y)-\langle y,z\rangle$. Using  Jensen's inequality and $b(\varphi)=0$, we get
\[
\int e^{-\L((\L\varphi)_z)}\ge\int e^{-\varphi(y)+\langle y,z\rangle}dy\ge e^{\langle b(\varphi),z\rangle}\int e^{-\varphi}=\int e^{-\varphi}.
\]
Applying Theorem \ref{thm:bs-func} to $\L\varphi$, there exists thus  a $z$ such that
\[
\int e^{-\varphi}\int e^{-\L\varphi}\le \int e^{-\L((\L\varphi)_z)}\int e^{-\L\varphi}\le(2\pi)^n.
\]
\end{remark}
As Lehec observed also, this gives a new proof of the result of Lutwak \cite{Lu91}:

\begin{proposition}\label{propLut} For starshaped body $K\subset\R^n$ (for all $(x,t) \in K\times[0,1]$, one has $tx\in K$) with barycenter at $0$, one has  $$\vol(K)\vol(K^*)\le\vol(B_2^n)^2.$$
\end{proposition}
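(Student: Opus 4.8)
The plan is to deduce Proposition~\ref{propLut} from Remark~\ref{rk:lehec-centered} by choosing the convex function $\varphi$ whose exponential $e^{-\varphi}$ encodes the indicator of $K$ in a scale-invariant way, namely a \emph{gauge-type} function. First I would recall that a starshaped body $K$ with $0\in\inte(K)$ is determined by its gauge $\|x\|_K=\inf\{t>0:\ x\in tK\}$, which is positively homogeneous of degree $1$ (but not necessarily convex, since $K$ need not be convex); and that $K^*=\{y:\ \langle x,y\rangle\le 1\text{ for all }x\in K\}$ remains convex, with $\|y\|_{K^*}=\sup_{x\in K}\langle x,y\rangle$. The natural candidate is $\varphi(x)=\|x\|_K$ itself: then $\int_{\R^n}e^{-\|x\|_K}dx = n!\,\vol(K)$ by integration in polar-type coordinates (writing $x=r u$ with $u\in\partial K$ and $r=\|x\|_K$, the radial integral is $\int_0^\infty r^{n-1}e^{-r}dr=(n-1)!$ times $n\vol(K)$), and similarly $\int_{\R^n}e^{-\|y\|_{K^*}}dy=n!\,\vol(K^*)$.

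The key step is then to relate the Legendre transform $\L\varphi$ to the polar gauge. For $\varphi(x)=\|x\|_K$ one computes
$$
\L\varphi(y)=\sup_{x}\bigl(\langle x,y\rangle-\|x\|_K\bigr)
=\sup_{r\ge 0}\ r\Bigl(\sup_{\|u\|_K\le 1}\langle u,y\rangle-1\Bigr)
=\begin{cases}0,&\|y\|_{K^{\circ\circ}}\le 1,\\+\infty,&\text{otherwise},\end{cases}
$$
where $K^{\circ\circ}$ denotes the convex hull of $K$ (equivalently, $\sup_{\|u\|_K\le1}\langle u,y\rangle=h_K(y)=\|y\|_{(K^*)}$ since the support function only sees the convex hull). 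Thus $\L\varphi=\mathbf 1_{(\conv K)^*}^{\infty}$ and $\int e^{-\L\varphi}=\vol\bigl((\conv K)^*\bigr)=\vol(K^*)$. Since $\varphi$ need not be convex, its biconjugate satisfies $\L\L\varphi=(\conv\varphi)\le\varphi$, which only helps in the right direction; more simply, I would note that the quantity $\int e^{-\varphi}\int e^{-\L\varphi}$ is unchanged if we first replace $K$ by its convex hull for the \emph{second} factor and keep $K$ for the first, and $\vol(K)\le\vol(\conv K)$ makes the inequality only harder — so it suffices to treat convex $K$, and for convex $K$ the function $\varphi=\|\cdot\|_K$ is genuinely convex, so Remark~\ref{rk:lehec-centered} applies verbatim once we check the barycenter hypothesis.

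The main obstacle is the barycenter condition $b(\varphi)=\int x\,e^{-\varphi(x)}dx=0$: the hypothesis of the proposition is that the \emph{centroid of $K$} vanishes, $\int_K x\,dx=0$, not that $\int_{\R^n}x\,e^{-\|x\|_K}dx=0$. However these coincide, because in polar coordinates
$$
\int_{\R^n}x\,e^{-\|x\|_K}\,dx
=\int_{\partial K}\!\!\int_0^\infty (ru)\,e^{-r}\,r^{n-1}\,\bigl|\langle u,n_K\rangle\bigr|\,dr\,d\mathcal H^{n-1}(u)
= n!\int_{\partial K}\!\!u\,\bigl|\langle u,n_K\rangle\bigr|\,d\mathcal H^{n-1}(u),
$$
up to the same positive constant that appears in $\int_K x\,dx=n\int_{\partial K}u\,|\langle u,n_K\rangle|/(n+1)\cdots$; more cleanly, for \emph{any} radial weight $w(\|x\|_K)$ one has $\int w(\|x\|_K)x\,dx = c_w\int_K x\,dx$ with $c_w>0$ depending only on $w$ and $n$ (substitute $x\mapsto sx$ to see the vector-valued radial integral is a constant times $\int_K x\,dx$). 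Hence $b(\varphi)=0\iff \int_K x\,dx=0$. Having arranged $b(\varphi)=0$, Remark~\ref{rk:lehec-centered} gives $\int e^{-\varphi}\int e^{-\L\varphi}\le(2\pi)^n$, i.e. $(n!)^2\vol(K)\vol(K^*)\le(2\pi)^n$; dividing, this is exactly $\vol(K)\vol(K^*)\le (2\pi)^n/(n!)^2$. Finally I would reconcile the constant: $(2\pi)^n/(n!)^2$ is precisely $\vol(B_2^n)^2$ when $n=2$ but \emph{not} in general, so the correct choice is instead $\rho(t)=e^{-t/2}$ replaced by the gauge-adapted profile, or rather one applies the sharper \emph{Blaschke--Santal\'o} normalization: redo the computation with $\varphi(x)=\tfrac12\|x\|_K^2$, for which $\int e^{-\varphi}=\vol(K)\Gamma(1+\tfrac n2)2^{n/2}$ and $\L\varphi(y)=\tfrac12\|y\|_{(\conv K)^*}^2$, so that Remark~\ref{rk:lehec-centered} yields $\vol(K)\vol(K^*)\,2^n\Gamma(1+\tfrac n2)^2\le(2\pi)^n$, which rearranges to $\vol(K)\vol(K^*)\le \bigl(\pi^{n/2}/\Gamma(1+\tfrac n2)\bigr)^2=\vol(B_2^n)^2$, as claimed. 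The only genuinely delicate point in the whole argument is that the barycenter of $K$ and the barycenter of $e^{-\varphi}$ agree for \emph{this} choice of $\varphi$ as well; I would verify it by the same radial-substitution observation, since $\int x\,e^{-\|x\|_K^2/2}dx$ is again a positive constant (depending on $n$ only) times $\int_K x\,dx$.
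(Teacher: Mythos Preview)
Your final argument with $\varphi(x)=\tfrac{1}{2}\|x\|_K^2$ is exactly the paper's proof: verify $b(\varphi)=0$ from $\int_K x\,dx=0$ via the radial change of variables, compute $\L\varphi(y)=\tfrac{1}{2}\|y\|_{K^*}^2$, and apply Remark~\ref{rk:lehec-centered} to obtain $\vol(K)\vol(K^*)\cdot 2^n\Gamma(1+\tfrac n2)^2\le(2\pi)^n$. The detour through $\varphi=\|x\|_K$ was unnecessary (and your reduction ``it suffices to treat convex $K$'' there is flawed, since passing to $\conv K$ need not preserve the barycenter hypothesis), but this is moot: Theorem~\ref{thm:bs-func} and hence Remark~\ref{rk:lehec-centered} do not assume $\varphi$ convex, so no such reduction was ever needed.
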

\begin{proof} Let $\varphi(x)=\frac{\|x\|_K^2}{2}$. Then   since 
\[
\int_{\R^n}xe^{-\frac{\|x\|_K^2}{2}}dx=\int_{\R^n}x\int_{\|x\|_K}^{+\infty}te^{-\frac{t^2}{2}}dtdx=\int_0^{+\infty}t^{n+1}e^{-\frac{t^2}{2}}dt\int_K xdx=0,
\]
one has $b(\varphi)=0$. Moreover for any $y\in\R^n$, one has $\L\varphi(y)=\sup_x\langle x,y\rangle -\frac{\|x\|_K^2}{2}=\frac{\|y\|_{K^*}^2}{2}$ and 
 $\int_{\R^n}e^{-\frac{\|x\|_K^2}{2} }dx=
2^{\frac{n}{2}} \Gamma(\frac{n}{2}+1)\vol(K).$ 
\end{proof}
Before giving sketches of various proofs of Theorems \ref{thm:bs-func-gen} and \ref{thm:bs-func},   we need a lemma:
\begin{lemma}\label{lem:PLmult}
Let $\alpha,\beta, \gamma:\R_+\to\R_+$ be measurable functions such that for every $s,t>0$ one has $\alpha(s)\beta(t)\le \gamma(\sqrt{st})^2$, then 
  $
\int_{\R_+}\alpha(t)dt\int_{\R_+}\beta(t)dt\le\left(\int_{\R_+}\gamma(t)dt\right)^2.
$
\end{lemma}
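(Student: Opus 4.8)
The plan is to reduce Lemma~\ref{lem:PLmult} to the classical one-dimensional Prékopa--Leindler inequality by an exponential change of variables. First I would pass from $\R_+$ to $\R$: for $u\in\R$ set
\[
\tilde\alpha(u)=e^u\alpha(e^u),\qquad \tilde\beta(u)=e^u\beta(e^u),\qquad \tilde\gamma(u)=e^u\gamma(e^u).
\]
These functions are nonnegative and measurable, and the substitution $s=e^u$ (so $ds=e^u\,du$) gives $\int_\R\tilde\alpha=\int_{\R_+}\alpha$, and likewise $\int_\R\tilde\beta=\int_{\R_+}\beta$ and $\int_\R\tilde\gamma=\int_{\R_+}\gamma$. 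Hence the inequality to be proved is equivalent to $\int_\R\tilde\alpha\int_\R\tilde\beta\le\bigl(\int_\R\tilde\gamma\bigr)^2$.

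Next I would transport the hypothesis. For $u,v\in\R$ put $s=e^u$ and $t=e^v$, so that $\sqrt{st}=e^{(u+v)/2}$, and therefore
\[
\tilde\alpha(u)\,\tilde\beta(v)=e^{u+v}\alpha(s)\beta(t)\le e^{u+v}\gamma\bigl(\sqrt{st}\bigr)^2=\Bigl(e^{(u+v)/2}\gamma\bigl(e^{(u+v)/2}\bigr)\Bigr)^2=\tilde\gamma\left(\frac{u+v}{2}\right)^2.
\]
Since all three functions are nonnegative, this is exactly the condition $\tilde\gamma\bigl(\frac{u+v}{2}\bigr)\ge\tilde\alpha(u)^{1/2}\tilde\beta(v)^{1/2}$ for all $u,v\in\R$, which is the hypothesis of the one-dimensional Prékopa--Leindler inequality with parameter $\lambda=\tfrac12$. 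That inequality (see for instance \cite[page~3]{Pi}) then yields $\int_\R\tilde\gamma\ge\bigl(\int_\R\tilde\alpha\bigr)^{1/2}\bigl(\int_\R\tilde\beta\bigr)^{1/2}$; squaring and translating back through the change of variables gives the claim.

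Finally one has to dispose of degenerate cases before invoking Prékopa--Leindler: if $\int_{\R_+}\alpha=0$ or $\int_{\R_+}\beta=0$ the asserted inequality is trivial, and if $\int_{\R_+}\gamma=+\infty$ there is nothing to prove, so one may assume the three integrals all lie in $(0,+\infty)$. There is no genuine obstacle here; the only small points requiring attention are the measurability of $\tilde\alpha,\tilde\beta,\tilde\gamma$ (immediate from that of $\alpha,\beta,\gamma$) and these boundary cases. If a self-contained argument is preferred, one can instead reprove the one-dimensional Prékopa--Leindler inequality on the spot by the standard monotone rearrangement argument (after a routine approximation making the functions positive and continuous): normalizing $\int_\R\tilde\alpha=\int_\R\tilde\beta=1$, define $x(s),y(s)$ for $s\in(0,1)$ by $\int_{-\infty}^{x(s)}\tilde\alpha=\int_{-\infty}^{y(s)}\tilde\beta=s$, set $z(s)=\tfrac12\bigl(x(s)+y(s)\bigr)$, and use $z'(s)=\tfrac12\bigl(\tilde\alpha(x(s))^{-1}+\tilde\beta(y(s))^{-1}\bigr)\ge\tilde\alpha(x(s))^{-1/2}\tilde\beta(y(s))^{-1/2}$ together with the transported hypothesis to obtain $\int_\R\tilde\gamma\ge\int_0^1\tilde\gamma(z(s))\,z'(s)\,ds\ge\int_0^1 ds=1$.
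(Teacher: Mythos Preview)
Your proof is correct and is essentially the same as the paper's: define $f(x)=e^x\alpha(e^x)$, $g(x)=e^x\beta(e^x)$, $h(x)=e^x\gamma(e^x)$, check that the hypothesis becomes $f(x)g(y)\le h\bigl(\tfrac{x+y}{2}\bigr)^2$, apply the one-dimensional Pr\'ekopa--Leindler inequality, and undo the change of variables. Your write-up is simply more detailed (handling degenerate cases and sketching a self-contained proof of Pr\'ekopa--Leindler), but the argument is the same.
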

\begin{proof}
Define  $f,g,h:\R\to\R$ by $f(x)=\alpha(e^x)e^x$, $g(x)=\beta(e^x)e^x$ and $h(x)=\gamma(e^x)e^x$. Then $f(x)g(y)\le h(\frac{x+y}{2})$ for all $x,y\in\R$. By Prékopa-Leindler inequality (see \cite{Pi}, p.3)  we get
  $
\int_{\R}f(x)dx\int_{\R}g(x)dx\le\left(\int_{\R}h(x)dx\right)^2.
$
We conclude with a change of variables.
\end{proof}
\vskip 2mm
\noindent{\bf Proofs of Theorem \ref{thm:bs-func-gen}:} \\
1) In the case when $f$ is even and $\rho$ is decreasing,  this proof is due to Ball \cite{Ba0}. For $s,t\in\R_+$, let $K_s=\{f\ge s\}$ and $L_t=\{g\ge t\}$. The hypothesis on $f$ and $g$ implies that   $L_t\subset \rho^{-1}(\sqrt{st})K_s^*$. Since $f$ is even, $K_s$ is symmetric. We deduce from Blaschke-Santal\'o inequality that for every $s,t\in\R_+$, if $\alpha(s)=\vol(K_s)$ and $\beta(t)=\vol(L_t)$, one has
  $$ 
\alpha(s)\beta(t)=\vol(K_s)\vol(L_t) \le(\rho^{-1}(\sqrt{st}))^n\vol(K_s)\vol(K_s^*)\le(\rho^{-1}(\sqrt{st}))^n\vol(B_2^n)^2.
$$
Denoting $\gamma(t)=(\rho^{-1}(t))^{n/2}\vol(B_2^n)$, we apply Lemma \ref{lem:PLmult} and 
use that $\int_{\R^n} f(x) dx=\int_0^{+\infty}\alpha(s) ds $... to conclude.

\vskip 1mm
\noindent 
2) In the case when $f$ is not supposed to be  even, but is log-concave, the proof of Theorem \ref{thm:bs-func-gen} given in \cite{FM4} uses the so-called Ball's body $K_f(z)$ associated to a log-concave function $f$,  which is defined by 
  $$
K_f(z)=\left\{x\in\R^n; \int_0^{+\infty} r^{n-1}f(z+rx)dr\ge1\right\}.
$$
It follows from Ball's results \cite{Ba00} that $ K_f(z)$ is convex 
 and that its radial function is $r_{K_f(z)}(x)= \left(\int_0^{+\infty} r^{n-1}f(z+rx)dr\right)^{\frac{1}{n}}$ for  $x\in\R^n\setminus\{0\}$. 
If $x,y\in\R^n$ satisfy $\langle x,y\rangle>0$,  define for $r\ge0$, $\alpha(r)=r^{n-1}f(z+rx)$, $\beta(r)=r^{n-1}g(rx)$ and $\gamma(r)=r^{n-1}\rho(r^2\langle x,y\rangle)$. It follows from Lemma
\ref{lem:PLmult} that
  $$
\int_0^{+\infty} r^{n-1}f(z+rx)dr\int_0^{+\infty} r^{n-1}g(rx)dr\le\left(\int_0^{+\infty}r^{n-1}\rho(r^2\langle x,y\rangle)dr\right)^2.
$$
This means that 
$$
\langle x,y\rangle\le \frac{ c_n(\rho)}{r_{K_f(z)}(x)r_{K_g(0)}(y)} ,
\mbox{ where }c_n(\rho):=\left(\int_0^{+\infty}r^{n-1}\rho(r^2)dr\right)^{2/n},
$$
or in other words $K_g(0)\subset c_n(\rho) K_f(z)^*$. Moreover, one has 
$$
\int_{\R^n} f(x) dx=n\vol\big(K_f(z)\big)\mbox{ 
 for every }z\in \supp(f).
$$ 
Using Brouwer's fixed point theorem, it was proved in \cite{FM4} that for some  $z\in \R^n$, the center of mass of $K_f(z)$ is at the origin. The result  follows  then from Blaschke-Santal\'o inequality. 

This method was also used in \cite{BBF} to prove stability versions of the functional forms of Blaschke-Santal\'o inequality.\\
 
\noindent{\bf Proofs of Theorem \ref{thm:bs-func}:}

\noindent 1) The proof given in \cite{AKM} attaches to $\varphi:\R^n\to\R\cup\{+\infty\}$, {\it supposed here to be even},  the functions $f_m(x)=\left(1-\frac{\varphi(x)}{m}\right)_+^m$, for $m\ge1$ and the convex bodies 
 $$
 K_m(f_m):=\{(x,y)\in\R^{n+m};|y|\le f_m(\sqrt{m}x)^{1/m}\}.
 $$
 When $m\to+\infty$, $f_m\to e^{-\varphi}$ and
 $$
 m^\frac{n}{2}\frac{\vol\big(K_m(f_m)\big)}{\vol(B_2^m)}=\int_{\R^n}f_m(x)dx\to \int_{\R^n}e^{-\varphi(x)}dx.
 $$
 Moreover $K_m(f_m)^*=K_m(\L_mf_m)$, where 
   $
 \L_m(f_m)(y)=\inf_{x}\frac{\left(1-\frac{\langle x,y\rangle}{m}\right)^m_+}{f_m(x)}.
 $
Also, when $m\to+\infty$
 $$\L_m(f_m)(y)\to e^{-\L\varphi(y)}\mbox{ and }
   m^\frac{n}{2}\frac{\vol\big(K_m(\L_m\varphi)\big)}{\vol(B_2^m|}=\int_{\R^n}\L_mf_m(x)dx\to \int_{\R^n}e^{-\L\varphi(x)}dx.
 $$
One then applies the Blaschke-Santal\'o inequality to the bodies $K_m(f_m)$.\\

\noindent
2) Lehec's proof \cite{Le2} of Theorem \ref{thm:bs-func}  uses induction on the dimension. For $n=1$, choose $z\in \R$ such that $\int_z^{+\infty}e^{-\varphi(t)}dt=\int_{\R}e^{-\varphi(t)}dt/2$. For  all $s,t\ge0$, one has $\varphi_z(s)+\L(\varphi_z)(t)\ge st$. Thus, the functions $\alpha(s)=e^{-\varphi_z(s)}$, $\beta(t)=e^{-\L(\varphi_z)(t)}$ and $\gamma(u)=e^{-u^2/2}$ satisfy $\alpha(s)\beta(t)\le \gamma(\sqrt{st})^2$, for every $s,t\ge0$. It follows from Lemma \ref{lem:PLmult} that 
\begin{equation}\label{pl-func-dim1}
\int_0^{+\infty}e^{-\varphi_z(t)}dt\int_0^{+\infty}e^{-\L(\varphi_z(t))}dt=\int_{\R_+}\alpha(t)dt\int_{\R_+}\beta(t)dt\le\left(\int_{\R_+}\gamma(u)du\right)^2=\frac{\pi}{2}.
\end{equation}
This inequality  also holds on $\R_{-}$; adding the two inequalities, we get the result. 

 Now  suppose that the results holds  for $n$, and let us do the induction step. Let $\varphi:\R^{n+1}\to\R\cup\{+\infty\}$. If $X\in\R^{n+1}$, we denote $X=(x,s)\in\R^n\times \R$. Let   
$$\VP(\varphi):=\min_z\int_{\R^{n+1}}e^{-\varphi(X)}dX\int_{\R^{n+1}} e^{-\L(\varphi_z)(X)}dX.$$ 
 For any invertible affine map $A$, one has $\VP(\varphi\circ A)=\VP(\varphi)$. Translating $\varphi$ in the $e_{n+1}$ direction, we may assume that $$\int_{s>0}\int e^{-\varphi(x,s)}dxds=\int_{s<0}\int e^{-\varphi(x,s)}dxds.$$ Define  $b_+(\varphi)$ and $b_-(\varphi)$ in $\R^{n+1}$ by
\[
b_+(\varphi)=\frac{\int_{s>0}\int(x,s)e^{-\varphi(x,s)}dxds}{\int_{s>0}\int e^{-\varphi}dxds}\quad \hbox{and}\quad b_-(\varphi)=\frac{\int_{s<0}\int(x,s)e^{-\varphi(x,s)}dxds}{\int_{s<0}\int e^{-\varphi}dxds}.
\] 
Since $\langle b_+(\varphi),e_{n+1}\rangle>0$ and $\langle b_-(\varphi),e_{n+1}\rangle<0$, the point $\{z\}:=[b_{-}(\varphi),b_+(\varphi)]\cap e_{n+1}^\bot$ is well defined. By translating $\varphi$ in the remaining directions, we may assume that $z=0$.
Let $A$ be the linear invertible map defined by $Ax=x$ for $x\in\R^n$ and $Ae_{n+1}=b_+(\varphi)$. Then $b_+(\varphi\circ A)=A^{-1}b_+(\varphi)=e_{n+1}$. Changing $\varphi$ into $\varphi\circ A$, we may assume that $b_+(\varphi)=e_{n+1}$. We define $\Phi, \Psi:\R^n\to\R\cup\{+\infty\}$ by 
\[
e^{-\Phi(x)}=\int_0^{+\infty}e^{-\varphi(x,t)}dt\quad \hbox{and}\quad
e^{-\Psi(x)}=\int_0^{+\infty}e^{-\L\varphi(x,t)}dt.
\]
Since  $b_+(\varphi)=e_{n+1}$, we get 
$\int_{\R^n}xe^{-\Phi(x)}dx=\int_{t>0}\int_{\R^n}xe^{-\varphi(x,t)}dxdt=0.$
Hence $b(\Phi)=0$. From the induction hypothesis and the remark after  Theorem \ref{thm:bs-func}, it follows that 
\begin{equation}\label{induction-func}
\int_{\R^n}e^{-\Phi(x)}dx\int_{\R^n}e^{-\L\Phi(y)}dy\le (2\pi)^n.
\end{equation}
For every $x,y\in\R^n$ and $s,t\in\R$, let $\varphi^x(s)=\varphi(x,s)$ and $(\L\varphi)^y(t)=\L\varphi(y,t)$. Applying again Lemma \ref{lem:PLmult} as in \eqref{pl-func-dim1}, we get 
\[
\int_0^{+\infty}e^{-\varphi^x(s)}ds\int_0^{+\infty}e^{-\L(\varphi^x)(t)}dt\le\frac{\pi}{2}.
\]
Since $\varphi^x(s)+(\L\varphi)^y(t)\ge\langle x,y\rangle +st$, one has  $(\L\varphi)^y(t)-\langle x,y\rangle\ge \L(\varphi^x)(t)$. Thus for $x,y\in\R^n$, 
\[
e^{-\Phi(x)-\Psi(y)}=\int_0^{+\infty}e^{-\varphi^x(s)}ds\int_0^{+\infty}e^{-(\L\varphi)^y(t)}dt\le \frac{\pi}{2}e^{-\langle x,y\rangle}.
\]
This implies that $e^{-\Psi(y)}\le\frac{\pi}{2}e^{-\L\Phi(y)}$. Using \eqref{induction-func}, we get 
\[
\int_{\R^n}e^{-\Phi(x)}dx\int_{\R^n}e^{-\Psi(y)}dy\le \frac{\pi}{2}(2\pi)^n.
\]
that is
\[
\int_{0}^{+\infty}\int_{\R^n}e^{-\varphi(x,s)}dxds\int_{0}^{+\infty}\int_{\R^n}e^{-\L\varphi(y,t)}dydt\le \frac{\pi}{2}(2\pi)^n.
\]
 Adding this to the analogous bound for $s<0$ and using $\int_{s>0}\int e^{-\varphi(x,s)}dxds=\int_{s<0}\int e^{-\varphi(x,s)}dxds$, we conclude.

\begin{remark} Various $L_p$-versions of the functional Blaschke-Santal\'o's inequalities have been given (see for instance  \cite{HJM}). Also, Blaschke-Santal\'o's type inequality were established in the study of  extremal general affine surface area \cite{GHSW, Y, Hoe}. A consequence of the Blaschke-Santal\'o inequality was recently given in \cite{VY}. 
\end{remark}

\subsection{Lower bounds of the volume product of log-concave functions} 

Let $\varphi:\R^n\to\R\cup\{+\infty\}$ be convex.  The  {\it domain of $\varphi$} is $\dom(\varphi):=\{x\in\R^n; \varphi(x)<+\infty\}$. If $0<\int e^{-\varphi}<+\infty$, we define
the {\it functional volume product of $\varphi$} is
$$\VP(\varphi)=\min_z\int_{\R^n}e^{-\varphi(x)}dx\int_{\R^n}e^{-\L(\varphi_z)(y)}dy.$$
If $\varphi$ is even, this minimum is reached at $0$.
The following conjectures were proposed in \cite{FM3}.

\begin{conj}\label{mahler-func-conj} If $n\ge1$ and $\varphi:\R^n\to\R\cup\{+\infty\}$ is a convex function such that $0<\int e^{-\varphi}<+\infty$. Then 
\[
\int_{\R^n}e^{-\varphi(x)}dx\int_{\R^n}e^{-\L\varphi(y)}dy\ge e^n,
\]
with equality if and only if there is a constant $c>0$ and an invertible linear map $T$ such that 
\[e^{-\varphi(Tx)}=c\prod_{i=1}^ne^{-x_i}{\bf 1}_{[-1,+\infty)}(x_i).
\]
\end{conj}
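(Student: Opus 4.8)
Conjecture~\ref{mahler-func-conj} is open; by the transference results of \cite{FM3} it is, in each fixed dimension $n$, equivalent to the Mahler conjecture for convex bodies --- the general statement to Conjecture~\ref{mahler}, and its restriction to even $\varphi$ to Conjecture~\ref{conjcube} --- so a full proof is out of reach without a genuinely new idea, and a realistic programme has two components: a usable reduction of the functional problem to a form in which the body techniques can be reapplied, and the extraction of new special cases. The base case $n=1$ can be done by hand: if $\varphi$ is convex with $0<\int e^{-\varphi}<\infty$, then up to an affine change of variables (which alters neither factor and preserves the class of extremisers) one reduces to $\dom\varphi$ being a half-line or all of $\R$, splits at a minimum point of $\varphi$, applies Lemma~\ref{lem:PLmult} on each side as in \eqref{pl-func-dim1}, and invokes the equality case of Prékopa--Leindler to conclude that the product is $\ge e$, with equality exactly when $e^{-\varphi(x)}=ce^{-x}{\bf 1}_{[-1,+\infty)}(x)$.

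For $n\ge2$ the natural attempt is induction by disintegration, running Lehec's proof of the functional Blaschke--Santal\'o inequality (Theorem~\ref{thm:bs-func}) in reverse: writing $X=(x,s)\in\R^{n-1}\times\R$, one forms the marginal $e^{-\Phi(x)}=\int_\R e^{-\varphi(x,s)}\,ds$ and the analogous marginal of $\L\varphi$, relates $\VP(\varphi)$ to $\VP(\Phi)$ and to the one-dimensional volume products of the fibres $s\mapsto\varphi(x,s)$, and closes using the induction hypothesis in dimension $n-1$ together with the base case applied on the fibres. The obstruction is that every tool that made the upper bound work --- Prékopa--Leindler, Blaschke--Santal\'o, the Ball-body inclusions $K_g(0)\subset c_n(\rho)K_f(z)^*$ --- produces an \emph{upper} bound on a product of two integrals, which is the wrong direction here. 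What one would need instead is a symmetrization- or shadow-system-type operation moving a general $\varphi$ toward a sum of truncated linear functions (the functional analogue of a simplex, respectively of a Hanner polytope) without increasing $\int e^{-\varphi}\int e^{-\L\varphi}$. No such operation is known even for convex bodies; this is precisely the main obstacle, and it is why a direct induction cannot be expected to close.

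Accordingly the concrete deliverables I would target are special cases imported from the body theory. The case of \emph{unconditional} $\varphi$ is already handled in \cite{FM1} via Prékopa--Leindler and should be recoverable and sharpened in this language. Functions $\varphi$ invariant under a reflection group whose fixed-point set is a single point should yield to an adaptation of Barthe--Fradelizi \cite{BF}. The planar case $n=2$ should follow either from the equivalence above and the known planar body case, or directly by a functional ``remove a breakpoint'' argument: approximate $\varphi$ by a convex piecewise-affine function, let one breakpoint of its graph move along a suitable direction so that $\int e^{-\varphi_t}$ stays constant, observe that $t\mapsto\VP(\varphi_t)^{-1}$ is convex (the functional shadow-system phenomenon underlying Theorem~\ref{shadow}), and pass to an endpoint with one fewer breakpoint. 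For $n=3$ with $\varphi$ even, one could try to lift the equipartition scheme of Section~\ref{MC} --- Theorem~\ref{thm:equipart} and Proposition~\ref{Meyergen} --- replacing sections of $K$ by sublevel sets of $\varphi$.

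Finally, independently of the global statement, I would establish \emph{local} minimality of the conjectured extremisers in a suitable metric on convex functions --- a functional counterpart of Kim's Theorem~\ref{JKim}, in the spirit of the functional local results of \cite{GM} --- which, combined with Lemma~\ref{lem:metric}, would immediately produce a stability version of the conjectured inequality even in the absence of the global result. The most optimistic scenario is that such a local analysis, made quantitative and global along a well-chosen family of perturbations, eventually supplies the missing monotone operation described above.
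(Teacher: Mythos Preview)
The statement is a \emph{conjecture}; the paper does not prove it and explicitly records only partial results (the theorem following Conjecture~\ref{mahler-func-conj-even}: the case $n=1$, and the case $\dom(\varphi\circ T)=\R_+^n$ with $\varphi\circ T$ non-decreasing). You correctly flag that it is open and that, via \cite{FM3}, it is entangled with Conjecture~\ref{mahler}, so there is no ``paper proof'' to compare against and your text is really a research programme. As such it is broadly sensible, but two points in it are wrong and worth fixing.

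First, your $n=1$ sketch is in the wrong direction. Lemma~\ref{lem:PLmult} and the Pr\'ekopa--Leindler inequality underlying \eqref{pl-func-dim1} produce \emph{upper} bounds on a product of two integrals; they cannot yield $\int e^{-\varphi}\int e^{-\L\varphi}\ge e$. The one-dimensional proofs in \cite{FM1,FM3} do not proceed this way: one route is a direct analysis of $t\mapsto\int e^{-\varphi}\int e^{-\L\varphi}$ along a suitable one-parameter deformation of a piecewise-affine $\varphi$ (a one-dimensional functional shadow system), and another passes through transport arguments; neither is ``Pr\'ekopa--Leindler on each half-line''. Your later remark that all the available tools push the wrong way is exactly right, and it applies already at $n=1$.

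Second, the transference you invoke does not give the planar functional case from the planar body case. As Remark~\ref{bodyfun} explains, the implication runs: functional conjecture in $\R^n$ $\Rightarrow$ body conjecture in $\R^n$ (take $\varphi=\|\cdot\|_K$), while the converse needs the body conjecture in $\R^{n+m}$ for \emph{all} $m$ (apply it to $K_m(\varphi)$ or $\Delta_m(\varphi)$ and let $m\to\infty$). Since Conjecture~\ref{mahler} is open in dimension $\ge3$, this route does not deliver Conjecture~\ref{mahler-func-conj} for $n=2$. The even functional conjecture in $\R^2$ \emph{is} known, but by the direct argument of \cite{FN}, not by transference; the non-even case in $\R^2$ is, as far as the paper reports, still open.
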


\begin{conj}\label{mahler-func-conj-even} If $n\ge1$ and $\varphi:\R^n\to\R\cup\{+\infty\}$ is an even convex function such that $0<\int e^{-\varphi}<+\infty$. Then 
\[
\int_{\R^n}e^{-\varphi(x)}dx\int_{\R^n}e^{-\L\varphi(y)}dy\ge 4^n,
\]
with equality if and only if there exist a constant $c>0$, two complementary subspaces $F_1$ and $F_2$ and two Hanner polytopes $K_1\subset F_1$ and  $K_2\subset F_2$ such that for all $(x_1,x_2)\in F_1\times F_2$,
$$e^{-\varphi(x_1+x_2)}=ce^{-||x_1||_{K_1} }{\bf 1}_{K_ 2}(x_2).$$
\end{conj}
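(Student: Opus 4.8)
The statement is a conjecture --- the even functional Mahler conjecture --- known only in special cases, so what follows is a strategy rather than a complete argument. The plan is to transplant to the functional setting the two routes that have worked for convex bodies: the symmetrization argument that handles unconditional objects, and the equipartition/divergence argument of \cite{IS1, FHMRZ} that settled symmetric bodies in $\R^3$. I would begin with routine reductions: replacing $\varphi$ by $\varphi+\varepsilon|x|^2$, mollifying, and truncating its domain reduces us to $\varphi$ that is $C^\infty$, strictly convex, finite and superlinear, so that $\L\varphi$ has the same properties and $\varphi\mapsto\int e^{-\varphi}\int e^{-\L\varphi}$ is continuous along the approximation; as $\varphi$ is even the infimum defining $\VP(\varphi)$ is attained at $0$, and since $\VP(\varphi\circ T)=\VP(\varphi)$ for invertible linear $T$ we may fix any convenient linear position. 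As a warm-up one recovers the unconditional case: applying the functional Steiner symmetrization with respect to each coordinate hyperplane, a Prékopa--Leindler argument (the functional analogue of the inclusion \eqref{steiner-inclusion}) shows $\int e^{-\varphi}$ is unchanged while $\int e^{-\L\varphi}$ does not increase, so $\VP$ is non-increasing; iterating makes $\varphi$ unconditional, and a direct induction on $n$ --- splitting $\R^n$ into its $2^n$ coordinate orthants and running a bilinear estimate on the weighted cone integrals in the spirit of \eqref{meyer1} --- yields $\VP(\varphi)\ge 4^n$, with equality only for the Hanner-type products $c\,e^{-\|x_1\|_{K_1}}\mathbf 1_{K_2}(x_2)$.

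For a general even $\varphi$ the difficulty is the one already met for bodies: no linear change of variables makes $\varphi$ unconditional, so the orthant decomposition is not adapted to Legendre duality. Following \cite{FHMRZ}, the plan (say for $n=3$) is: (i) use a Klartag-type equipartition (Theorem~\ref{thm:equipart}, or its absolutely-continuous refinement applied to $e^{-\varphi}\,dx$) to find coordinate hyperplanes cutting $e^{-\varphi}\,dx$ into eight octants of equal mass and the three coordinate slices into four equal pieces; (ii) prove functional analogues of Proposition~\ref{Meyergen} and Corollary~\ref{corme} in which Lebesgue cone volumes are replaced by weighted integrals $\frac1n\int_A\langle x,\overrightarrow{n}(x)\rangle e^{-\varphi(x)}\,dx$ and $\overrightarrow V(A)$ by the corresponding $e^{-\varphi}$-weighted surface vector, using the duality identity $\langle x,\nabla\varphi(x)\rangle=\varphi(x)+\L\varphi(\nabla\varphi(x))$ in place of $\langle x,y\rangle\le1$; (iii) run Stokes' theorem on the eight pieces of the relevant level-set boundaries and on their images under the duality map $\nabla\varphi$, harvest the cancellations produced by the equipartition of the slices, and reduce the three-dimensional estimate to three two-dimensional ones, closing by induction on $n$.

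The hard part, and the reason this remains open, is twofold. First, the functional equipartition must come with enough extra structure: the body proof uses that a central section of a symmetric body is again a symmetric body, whereas the right substitute for the marginals and slices of $e^{-\varphi}\,dx$ --- and the statement that they again obey a functional Mahler bound --- is delicate and not purely formal. Second, even granting step (i), the Stokes bookkeeping in step (iii) is precisely the long combinatorial computation that made the three-dimensional body proof difficult, and it has no known analogue for $n\ge4$. On top of this, unlike the upper-bound (ellipsoid) theory, the conjectured minimizers form an infinite family --- all Hanner polytopes, hence all functions $c\,e^{-\|x_1\|_{K_1}}\mathbf 1_{K_2}(x_2)$ --- so any sharp argument must detect this whole family, which is exactly the point where even the known cases are most involved.
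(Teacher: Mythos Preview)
The statement is a conjecture, and the paper presents it as such --- there is no proof in the paper to compare against, only the partial results of Theorem~14 (the unconditional case and $n\le 2$). You correctly frame your write-up as a strategy rather than a proof, and the outline of the general-$\varphi$ program (functional equipartition, weighted Stokes, reduction to lower dimensions) is a reasonable sketch of a research direction, with honest acknowledgement of the obstructions.

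There is, however, a genuine error in your ``warm-up'' paragraph. You write that under functional Steiner symmetrization ``$\int e^{-\varphi}$ is unchanged while $\int e^{-\L\varphi}$ does not increase, so $\VP$ is non-increasing''. This is backwards: the inclusion \eqref{steiner-inclusion} shows that the polar of the symmetral \emph{contains} the relevant Minkowski average, hence $\vol((S_HK)^*)\ge\vol(K^*)$ and $\VP$ is \emph{non-decreasing} under Steiner symmetrization (this is precisely why symmetrization proves Blaschke--Santal\'o). You therefore cannot use Steiner symmetrization to reduce the lower bound to the unconditional case --- if you could, Mahler's conjecture would have been settled long ago. The known proof in the unconditional case (\cite{FM2,FM3}, with equality in \cite{FGMR}) does not symmetrize at all: it works directly with the orthant decomposition and a bilinear estimate, exactly the ``induction on $n$ in the spirit of \eqref{meyer1}'' that you mention \emph{after} the Steiner sentence. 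So the second half of your warm-up is on the right track, but the first half should be deleted; it is not merely redundant but points in the wrong direction.
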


\begin{remark}
With a different duality for a convex function $\varphi$, another Blaschke-Santal\'o and inverse Santal\'o inequality were obtained in \cite{AS, FS}. 
 Another extension of Blaschke-Santal\'o inequality and of its functional form was considered in \cite{HoS}, where duality is combined with the study of inequalities related to  monotone non-trivial Minkowski endomorphisms.
\end{remark}

Partial results toward the proofs of these conjectures are gathered in the following theorem.
\begin{thm} Let $n\ge1$ and $\varphi:\R^n\to\R\cup\{+\infty\}$ be a convex function such that $0<\int e^{-\varphi}<+\infty$. Then
\begin{enumerate}
\item Conjecture \ref{mahler-func-conj} holds for $n=1$. It holds  also for all $n\ge 1$, if there exists an invertible affine map $T$ such that $\dom(\varphi\circ T)=\R_+^n$ and $\varphi\circ T$ is non-decreasing on $P$, in the sense that if $x_i\le y_i$ for all $1\le i\le n$, $(\varphi\circ T)(x_1,\dots, x_n)\le(\varphi\circ T)(y_1,\dots,y_n)$.
\item Conjecture \ref{mahler-func-conj-even} holds if $n=1$ or $n=2$.  It holds  also for all $n\ge 1$ if $\varphi$ is unconditional, in the sense that there exists an invertible linear map $T$ such that $(\varphi\circ T)(x_1,\dots,x_n)=(\varphi\circ T)(|x_1|,\dots,|x_n|)$ for all $(x_1,\dots,x_n)\in \R^n$.
\end{enumerate}
\end{thm}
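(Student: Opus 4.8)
The plan is to split the four claims into low-dimensional base cases ($n=1$ in both parts, $n=2$ in the even part) and product-structured cases in arbitrary dimension (domain an orthant with coordinatewise monotonicity for part (1), unconditionality for part (2)), to settle the base cases by a direct one-variable analysis, and then to bootstrap to the structured cases by induction on the dimension. For $n=1$ in Conjecture~\ref{mahler-func-conj} I would turn $\int_\R e^{-\varphi}\int_\R e^{-\L\varphi}\ge e$ into an elementary optimisation over convex functions of one variable: since the product is unchanged under adding a constant to $\varphi$ or rescaling its variable, and is $+\infty$ (hence harmless) whenever $\int e^{-\L\varphi}=+\infty$, one reduces to a normalised lower semicontinuous $\varphi$, for which both $\varphi$ and $\L\varphi$ are nonincreasing on one half-line and nondecreasing on a complementary one. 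The heart of the matter is then to show that the product is smallest when $\varphi$ is affine on its domain, which is then a half-line — replacing $\varphi$ by a supporting affine function and flattening away any bounded part of the domain does not increase $\int e^{-\varphi}$ while keeping $\int e^{-\L\varphi}$ under control — after which $\varphi(x)=x$ on $[-1,+\infty)$ gives the product $e$, which is the claimed minimum and identifies the equality case. The $n=1$ case of Conjecture~\ref{mahler-func-conj-even} follows the same scheme, with the simplification that an even convex function is automatically nondecreasing on $\R_+$, so that only the half-line $[0,+\infty)$ has to be analysed, and evenness forces the extremiser $e^{-|x|}$ and the constant $4$ (the one-dimensional Hanner polytope being a symmetric segment).

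For part (1) in arbitrary dimension I would induct on $n$ by partial Legendre transforms: after applying $T$ we may assume $\dom(\varphi)=\R_+^n$ with $\varphi$ coordinatewise nondecreasing, and writing $x=(x',x_n)$ and $e^{-\Phi(x')}=\int_0^{+\infty}e^{-\varphi(x',x_n)}\,dx_n$, the function $\Phi$ is again convex, supported on $\R_+^{n-1}$ and coordinatewise nondecreasing; the $n=1$ analysis, carried out in the fibre over each $x'$, produces on the dual side precisely the factor needed to pass from the $(n-1)$-dimensional bound $\int e^{-\Phi}\int e^{-\L\Phi}\ge e^{n-1}$ to $\int e^{-\varphi}\int e^{-\L\varphi}\ge e^n$, and the extremiser $\prod_i e^{-x_i}{\bf 1}_{[-1,+\infty)}(x_i)$ is the product of the one-dimensional ones, matching equality. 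For the unconditional case of part (2) I would instead transpose to functions the cone decomposition used above for unconditional convex bodies: restricting $\varphi$ to the positive orthant, where $\int_{\R^n}e^{-\varphi}=2^n\int_{\R_+^n}e^{-\varphi}$ and likewise for $\L\varphi$, using that the coordinate sections $\{x_i=0\}$ of an unconditional convex function are unconditional convex functions in one dimension less, and forming from these restrictions two competing points that duality forces into polar bodies; the one-variable case together with the induction hypothesis then gives $4^n$, with the Hanner structure governing equality exactly as for bodies.

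The genuinely hard input is the even case in dimension $2$, which fits neither inductive scheme. A reduction to the known planar symmetric Mahler inequality via the level sets $\{\varphi\le t\}$ and $\{\L\varphi\le s\}$ fails, since the body inequalities bound $\vol(\{\L\varphi\le s\})$ from \emph{above} whereas one needs a lower bound on $\int e^{-\L\varphi}$. The plan there is to imitate the argument for symmetric planar bodies: show that, after a linear map, the planar even case reduces to an ``almost unconditional'' situation — the projection of $\varphi$ onto each coordinate axis (obtained by maximising $e^{-\varphi}$ over the other coordinate) coinciding with its restriction to that axis — and then run a variant of the cone argument of the previous paragraph adapted to this weaker symmetry, as in the passage from unconditional to almost-unconditional bodies. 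Setting up this almost-unconditional position and controlling the boundary contributions is where the real work lies; every other case ultimately reduces to the one-variable computation together with the induction on the dimension. We refer to \cite{FM3} for the complete arguments.
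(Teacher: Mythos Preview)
The paper does not prove this theorem at all: it simply records which references establish which pieces (\cite{FM1,FM3} for $n=1$ in part (1) and the orthant case; \cite{FM2,FM3} and \cite{FGMR} for the unconditional case; \cite{FN} for the even $n=2$ case). Your sketch is broadly faithful to the strategies of \cite{FM3} for the one-dimensional base cases and the inductive passage via partial Legendre transforms in the orthant/unconditional situations, and your closing deferral to \cite{FM3} is in the same spirit as the paper's treatment.

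The one place where your plan is thin is exactly where the difficulty lies: the even case $n=2$. The ``almost unconditional'' reduction you propose is natural by analogy with bodies (every symmetric planar body is almost unconditional), but for even convex \emph{functions} there is no free coordinate position playing this role, and the actual proof in \cite{FN} is considerably more involved than ``run a variant of the cone argument''; it requires a delicate two-dimensional analysis that does not reduce to the one-variable computation plus induction. Your sketch correctly flags this as ``where the real work lies'' but does not indicate what that work is, and your final reference to \cite{FM3} is misplaced for this case --- it is \cite{FN} that settles $n=2$, and by methods you have not described. So: the inductive parts of your plan are essentially what the cited papers do, but for $n=2$ you have identified the right \emph{target} (an analogue of almost-unconditionality) without supplying the mechanism, and that mechanism is the substantive content of the result.
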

\noindent(1) For $n=1$, Conjecture \ref{mahler-func-conj} was proved in two different ways in \cite{FM1, FM3}. The case of non-decreasing convex functions on the positive octant was also proved in \cite{FM3}. 

\noindent(2)  For unconditional convex functions on $\R^n$, Conjecture \ref{mahler-func-conj-even} was established in two different ways in \cite{FM2, FM3}, with the case of equality in \cite{FGMR}). In particular, this settles the general case  $n=1$.  For $n=2$, it was proved in \cite{FN}.

\begin{remark}\label{bodyfun}
There is a strong link between Conjectures \ref{mahler} and  \ref{conjcube} forconvex bodies and their functional counterparts Conjectures \ref{mahler-func-conj} and \ref{mahler-func-conj-even}. Indeed, as it was observed in \cite{FM3}, given a symmetric convex body $K$ in $\R^n$, if $\varphi_K(x)=\|x\|_K$, we get  $e^{-\L\varphi_K}={\bf 1}_{K^*}$, and   integrating on level sets, 
$\P(\varphi_K)=n!\P(K)$. Therefore, if Conjecture \ref{mahler-func-conj-even} holds for $\varphi_K$, then Conjecture \ref{conjcube} holds for $K$. Reciprocally, if Conjecture \ref{conjcube} holds in $\R^n$ for every dimension $n$ then, given an even, convex function $\varphi:\R^n\to\R\cup\{+\infty\}$, we can apply it 
in dimension $n+m$ to the convex sets
\[
K_m(\varphi)=\left\{(x,y)\in\R^n\times\R^m; \|y\|_\infty\le \left( 1-\frac{\varphi(mx)}{m}\right)_+\right\}.
\]
Using 
$$
\vol_{n+m}(K_m(\varphi))=\frac{2^m}{m^n}\int_{\R^n}\left( 1-\frac{\varphi(x)}{m}\right)_+^mdx
$$
and 
\[
K_m(\varphi)^*=\left\{(x,y)\in\R^n\times\R^m; \|y\|_1\le \inf_{\varphi(x')\le m} \frac{(1-\langle x,x'\rangle)_+}{1-\frac{\varphi(x')}{m}}\right\},
\]
it is proved in \cite{FM3} that when $m\to+\infty$, the inequality $\P(K_m(\varphi))\ge\frac{4^{n+m}}{(n+m)!}$ gives $\P(\varphi)\ge 4^n$. In a similar way, if Conjecture \ref{mahler-func-conj} holds in dimension $n+1$, given a convex body $K$ in $\R^n$ with  Santal\'o point at the origin, we apply it  
to  $\varphi:\R^n\times\R\to\R\cup\{+\infty\}$ defined, 
by 
\[
e^{-\varphi(x,t)}={\bf 1}_{[-n-1,+\infty)}(t){\bf 1}_{(t+n+1)K}(x)e^{-t}.
\]
Then the Legendre transform of $\varphi$ is
\[
e^{-\L\varphi(y,s)}={\bf 1}_{(-\infty,1]}(s){\bf 1}_{(1-s)K^*}(y)e^{(n+1)(s-1)},
\]
and 
\[
\P(\varphi)=\frac{(n!)^2e^{n+1}}{(n+1)^{n+1}}|K||K^*|.
\]
This proves that if Conjecture \ref{mahler-func-conj} holds for $\varphi$, then 
\[
\P(K)\ge\P(\Delta_n)=\frac{(n+1)^{n+1}}{(n!)^2},
\]
which is Conjecture \ref{mahler} for $K$. Lastly, as shown in \cite{FM3}, one can adapt the arguments  for even functions to prove that, given a convex function $\varphi:\R^n\to\R\cup\{+\infty\}$.  Conjecture \ref{conjcube} applied to a well chosen sequence of bodies $\Delta_m(\varphi)$ in dimension $n+m$ gives Conjecture \ref{mahler-func-conj} for $\varphi$  when $m\to+\infty$.
\end{remark}

 It was also proved in \cite{GM} that if $\VP(\varphi)$ is minimal, then  $\varphi$ has no positive Hessian at any point.
Asymptotic estimates hold too: in the even case, it was proved in \cite{KM} that for some constant 
$c>0$, one has for all even convex functions $\varphi$ and all $ n\ge 1$, $\VP(\varphi)\ge c^n$. This  was generalized to all convex functions in \cite{FM3}.

\subsection{Volume product and transport inequalities}

Maurey \cite{Mau} introduced the following property $(\tau)$: 
Let $\mu$ be a measure on $\R^n$ and $c:\R^n\times\R^n\to\R_+$ be a lower semi-continuous function (called a {\it cost function}); we say that the couple  $(\mu,c)$ satisfies {\it property $(\tau)$} if for any continuous and bounded function $f:\R^n\to\R$, defining 
$$Q_cf(y)=\inf_x\left(f(x)+c(x,y)\right)\mbox{ for $y\in \R^n$}, $$
one has
\[
\int_{\R^n}e^{-f(x)}d\mu(x)\int_{\R^n}e^{Q_cf(y)}d\mu(y)\le 1.
\]
Maurey \cite{Mau} showed that if $\gamma_n$ is the standard Gaussian probability measure on $\R^n$, 
with density $(2\pi)^{-n/2}e^{-|x|^2/2}$, 
and $c_2(x,y)=\frac{1}{2}|x-y|^2$ then as a consequence of the Prékopa-Leindler inequality, $(\gamma_n,\frac{c_2}{2})$ satisfies property $(\tau)$.

In \cite{AKM}, it was pointed out that the functional form of the Blaschke-Santal\'o inequality for the Legendre transform (Theorem~\ref{thm:bs-func}) is equivalent to an improved property $(\tau)$ for even functions:  we say that the pair $(\gamma_n,c_2)$ satisfies the {\it even property $(\tau)$} if for any even function $f$,    one has 
\begin{equation}\label{tau-prop}
\int_{\R^n}e^{-f(x)}d\gamma_n(x)\int_{\R^n}e^{Q_{c_2}f(y)}d\gamma_n(y)\le 1.
\end{equation}
This equivalence follows from the change of function: $\varphi(x)=f(x)+\frac{|x|^2}{2}$ and the fact that 
\[
-\L\varphi(y)=\inf_x\left(f(x)+\frac{|x|^2}{2}-\langle x,y\rangle\right)=Q_{c_2}f(y)+\frac{|y|^2}{2}.
\]
A direct proof of  \eqref{tau-prop} was then given by Lehec in \cite{Le}. And it follows from Remark~\ref{rk:lehec-centered} above, due to Lehec \cite{Le2}, that \eqref{tau-prop} also holds as soon as  $\int_{\R^n}xe^{-f(x)}d\gamma_n(x)=0$.

Moreover, as shown for example in Proposition 8.2 of \cite{GL},  there is a general equivalence between  property $(\tau)$ and symmetrized  forms of transport-entropy inequality. These transport-entropy inequalities were introduced by Talagrand \cite{Ta}, who showed that, for every probability measure $\nu$ on $\R^n$, one has
\begin{equation}\label{tala}
W_2^2(\nu,\gamma_n)\le 2 H(\nu|\gamma_n),
\end{equation}
where $W_2$ is the {\it Kantorovich-Wasserstein distance} defined by
\[
W_2^2(\nu,\gamma_n)=\inf\left\{\int_{\R^n\times\R^n}|x-y|^2d\pi(x,y); \pi\in\Pi(\nu,\gamma_n) \right\},
\]
where $\Pi(\nu,\gamma_n)$ is the set of probability measures on $\R^n\times\R^n$ whose first marginal is $\nu$ and second marginal is $\gamma_n$ and $H$ is the  {\it relative entropy } defined for $d\nu=fd\gamma_n$ by 
\[
H(\nu|\gamma_n)=-\int_{\R^n}f\log fd\gamma_n.
\]
Using this type of equivalence between property $(\tau)$ and transport-entropy inequalities, Fathi \cite{Fat} proved  the following symmetrized form of Talagrand's transport-entropy inequality: if $\nu_1$ (or $\nu_2$) is centered, in the sense that $\int xd\nu_1(x)=0$, then 
\begin{equation}\label{tal-fathi}
W_2^2(\nu_1,\nu_2)\le 2 (H(\nu_1|\gamma_n)+H(\nu_2|\gamma_n)).
\end{equation}
He showed actually that \eqref{tal-fathi} is equivalent to the functional form of Blaschke-Santal\'o's inequality (Theorem~\ref{thm:bs-func}). 
Applying \eqref{tal-fathi} to $\nu_1=\gamma_n$, one recovers Talagrand's inequality \eqref{tala}. In his proof, Fathi used  a reverse logarithmic Sobolev inequality for log-concave functions established in \cite{AKSW} under some regularity assumptions, removed later with a simplified proof in \cite{CFGLSW}.

In a similar way, Gozlan \cite{Go} gave equivalent transport-entropy forms of Conjectures 3 and 4 and of Bourgain-Milman's asymptotic inequality. This work was  pursued in \cite{FGZ}, where new proofs of the one-dimensional case  of Conjectures 3 and 4 are also provided.

\section{Generalization to many functions and bodies}\label{secMB}

The following intriguing conjecture was proposed  by Kolesnikov and Werner \cite{KoW}.
\begin{conj}\label{KW}  Let  $\rho:\R\to \R^+$  be increasing  and for $m\ge 2$, let $f_i:\R^n\to \R,$  $i=1, \dots, m$ be even   Lebesgue integrable  functions satisfying
$$
\prod_{i=1}^m f_i(x_i) \le \rho\left(\sum\limits_{1\le i<j\le m} \langle x_i, x_j\rangle \right) \mbox{ for all } x_1, \dots, x_m \in \R^n.
$$
Then
$$
\prod_{i=1}^m \int_{\R^n} f_i(x_i) dx_i \le \left(\int_{\R^n}\rho^{\frac{1}{m}}\left(\frac{m(m-1)}{2} |u|^2\right) du\right)^m.
$$
\end{conj}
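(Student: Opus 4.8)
The natural strategy is to reduce the many-function statement to the two-function functional Blaschke–Santal\'o inequality (Theorem~\ref{thm:bs-func-gen}), much as the $m=2$ case of the conjecture is exactly that theorem. First I would pass to level sets: writing $K_i^{(s)}=\{f_i\ge s\}$, the hypothesis $\prod_i f_i(x_i)\le\rho(\sum_{i<j}\langle x_i,x_j\rangle)$ should translate into a containment relating the Minkowski-type combination of the symmetric convex bodies $K_i^{(s_i)}$ to a Euclidean ball, via the quadratic form $Q(x_1,\dots,x_m)=\sum_{i<j}\langle x_i,x_j\rangle$ on $(\R^n)^m$. The key linear-algebra observation is that $Q$ has a single negative eigendirection structure on the ``diagonal'' and is controlled on its complement; diagonalizing $Q$ and isolating the subspace where it is positive definite is what will let a ball appear on the right-hand side. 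Then Lemma~\ref{lem:PLmult}, applied iteratively in the $m$ radial variables $s_1,\dots,s_m$, converts the product of volumes of level sets into the product $\prod_i\int f_i$, and the integral $\int\rho^{1/m}(\tfrac{m(m-1)}2|u|^2)\,du$ emerges as the normalization of the ball.

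Concretely, the plan is: (i) reduce to $f_i$ even (given) and, by Theorem~\ref{thm:bs-func-gen} applied with $z=0$ in the even case, avoid worrying about Santal\'o-point translations; (ii) for fixed $s_1,\dots,s_m>0$, show that the hypothesis forces
$$
\langle x_1,\ldots,x_m\rangle\in\text{(balanced combination of }K_1^{(s_1)},\ldots,K_m^{(s_m)}\text{)}\ \Longrightarrow\ Q(x_1,\dots,x_m)\le\rho^{-1}\!\Big(\prod_i s_i\Big);
$$
(iii) invoke the Gaussian/ball case of the functional Blaschke–Santal\'o inequality, or rather an $m$-fold symmetric version of it, to bound $\prod_i\vol(K_i^{(s_i)})$ by a power of $\rho^{-1}(\prod s_i)$ times $\vol(B_2^n)^{\,m}$; and (iv) integrate over $s_1,\dots,s_m$ using Lemma~\ref{lem:PLmult} in a multiplicative (logarithmic change of variables) form, with the geometric-mean cost $\gamma(\sqrt[m]{s_1\cdots s_m})^m$ matching the structure of step (ii).

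The main obstacle is step (iii): the ordinary (two-body) Blaschke–Santal\'o inequality does not directly produce the sharp constant $\big(\int\rho^{1/m}(\tfrac{m(m-1)}2|u|^2)\,du\big)^m$ for $m\ge3$, because the quadratic form $Q$ is not block-diagonal and mixes all the bodies simultaneously. One needs a genuinely multi-body symmetric inequality — an analogue, in the Blaschke–Santal\'o (upper-bound) direction, of the Rogers/Brascamp–Lieb–Luttinger rearrangement machinery used in Theorem~\ref{cfpp} — to handle the coupling. A plausible route is a shadow-system argument in the spirit of Theorem~\ref{shadow} applied in the product space $(\R^n)^m$: symmetrize each $f_i$ toward its spherical rearrangement while controlling $\prod_i\int f_i$ and the constraint imposed by $\rho\circ Q$, reducing to the case where every $f_i$ is radial, where the inequality becomes a one-dimensional computation over the radii. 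Making the symmetrization respect the bilinear constraint $\prod f_i\le\rho(Q)$ simultaneously for all $m$ functions is, I expect, where the real work lies; everything else is bookkeeping with level sets and the Prékopa–Leindler-type Lemma~\ref{lem:PLmult}.
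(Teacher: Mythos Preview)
The statement you are attempting to prove is a \emph{conjecture}, not a theorem: the paper contains no proof of it. Immediately after stating Conjecture~\ref{KW}, the paper records only that Kolesnikov and Werner proved it in the special case where all the $f_i$ are unconditional, and that the corresponding convex-body version (Conjecture~\ref{KWBS}) is likewise known only for unconditional bodies (and later, by Kalantzopoulos--Saroglou, under weaker partial unconditionality hypotheses). The general statement is open.

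Your proposal is an honest outline of a plausible strategy, and you correctly identify where it breaks down: step~(iii), the ``genuinely multi-body symmetric inequality'' you need, is not available. This is not a technical gap you can fill with bookkeeping---it is exactly the content of the conjecture. The reduction to level sets and the use of Lemma~\ref{lem:PLmult} in the radial variables are the easy parts, and indeed mirror Ball's proof of Theorem~\ref{thm:bs-func-gen} in the even case. But for $m=2$ that argument bottoms out in the classical Blaschke--Santal\'o inequality for symmetric bodies, whereas for $m\ge3$ you would need the body version, Conjecture~\ref{KWBS}, which is itself open. Your suggested shadow-system symmetrization in $(\R^n)^m$ faces the same obstruction: the constraint $\prod_i f_i(x_i)\le\rho(Q(x_1,\dots,x_m))$ couples all the variables through the off-diagonal quadratic form $Q$, and there is no known symmetrization that simultaneously preserves this coupling and increases $\prod_i\int f_i$. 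The unconditional case works precisely because one can symmetrize coordinate-wise without disturbing the constraint, which is why that is the only case currently settled.
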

Conjecture \ref{KW} was proved by Kolesnikov and Werner  when the functions $f_i$ are unconditional.
Observe that  Conjecture \ref{KW} is  a functional form of a new conjectured  Blaschke-Santal\'o inequality involving more than two convex bodies. 
Indeed, for $1\le i\le m$, let $K_i$ be starbodies, $f_i(x)=v_n(2\pi)^{-n/2}  e^{-\|x\|_{K_i}^2/2}$ and  $\rho(t)=e^{-t/(m-1)}$. 
 Since $\vol(K_i)=v_n(2\pi)^{-n/2} \int_{\R^n} e^{-\|x\|_{K_i}^2/2}dx, $
 we get from Conjecture \ref{KW}:
\begin{conj}\label{KWBS} Let $m \ge 2$ and let $K_1, \dots, K_m$ be  symmetric convex bodies in $\R^n$  such that
\begin{equation}\label{polKW}
\sum\limits_{1\le i<j\le m} \langle x_i, x_j\rangle \le \frac{m-1}{2}\sum_{i=1}^m\|x_i\|^2_{K_i}, \mbox{ for all } x_1, \dots, x_m \in \R^n,
\end{equation}
then
$
\prod_{i=1}^m \vol(K_i) \le \vol(B_2^n)^m.
$
\end{conj}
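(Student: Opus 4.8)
The plan is to deduce Conjecture~\ref{KWBS} from the functional Conjecture~\ref{KW}, following the reduction sketched just above its statement. For $1\le i\le m$ one applies Conjecture~\ref{KW} to the Gaussian-type functions $g_i(x)=e^{-\|x\|_{K_i}^2/2}$ with $\rho(t)=e^{-t/(m-1)}$ (equivalently, to the normalized $f_i(x)=v_n(2\pi)^{-n/2}g_i(x)$, for which $\int f_i=\vol(K_i)$, absorbing the constant $(v_n(2\pi)^{-n/2})^m$ into $\rho$; I keep the unnormalized $g_i$ and rescale at the end). The first step is to check the hypothesis of Conjecture~\ref{KW}: after taking logarithms, the required inequality $\prod_{i=1}^m g_i(x_i)\le\rho\big(\sum_{i<j}\langle x_i,x_j\rangle\big)$ becomes
\[
\frac1{m-1}\sum_{1\le i<j\le m}\langle x_i,x_j\rangle\ \le\ \frac12\sum_{i=1}^m\|x_i\|_{K_i}^2\qquad(x_1,\dots,x_m\in\R^n),
\]
which is exactly assumption~\eqref{polKW} divided by $m-1$; since each $K_i$ is bounded with the origin in its interior, the $g_i$ are integrable, so all hypotheses are met.

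The second step is to evaluate the two sides of the conclusion of Conjecture~\ref{KW}. For any star body $K$ one has, by integrating over the level sets of the gauge and using $\vol(\{\|x\|_K\le r\})=r^n\vol(K)$,
\[
\int_{\R^n}e^{-\|x\|_K^2/2}\,dx=2^{n/2}\,\Gamma(\tfrac n2+1)\,\vol(K)=\frac{(2\pi)^{n/2}}{v_n}\,\vol(K),
\]
so the left-hand side is $\prod_{i=1}^m\int g_i=\big((2\pi)^{n/2}/v_n\big)^m\prod_{i=1}^m\vol(K_i)$. On the right-hand side, $\rho^{1/m}\big(\tfrac{m(m-1)}2|u|^2\big)=e^{-|u|^2/2}$, so $\big(\int_{\R^n}\rho^{1/m}(\tfrac{m(m-1)}2|u|^2)\,du\big)^m=(2\pi)^{nm/2}$. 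Conjecture~\ref{KW} then gives $\big((2\pi)^{n/2}/v_n\big)^m\prod_i\vol(K_i)\le(2\pi)^{nm/2}$, i.e. $\prod_{i=1}^m\vol(K_i)\le v_n^m=\vol(B_2^n)^m$. The equality case should transport in the same way; as a sanity check, for $m=2$ assumption~\eqref{polKW} is equivalent to $K_2\subseteq K_1^*$, and the statement reduces to $\vol(K_1)\vol(K_2)\le\vol(K_1)\vol(K_1^*)\le v_n^2$, recovering the Blaschke--Santal\'o inequality together with its equality case (polar-reciprocal ellipsoids).

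The main obstacle is that this is a conditional argument: Conjecture~\ref{KW} is itself open, so the reduction proves Conjecture~\ref{KWBS} only in the cases where \ref{KW} is known. In particular, if all the $K_i$ are unconditional with respect to a common basis then the functions $g_i$ are unconditional, the theorem of Kolesnikov and Werner applies, and $\prod_{i=1}^m\vol(K_i)\le\vol(B_2^n)^m$ holds unconditionally in that case. For general symmetric bodies one would instead need a direct proof that \eqref{polKW} implies $\prod_i\vol(K_i)\le v_n^m$; the natural approaches — a multi-body shadow-system argument in the spirit of Theorem~\ref{shadow} and Campi--Gronchi (deform one body $K_{i_0}$ along a shadow system while preserving~\eqref{polKW} and arrange that $\prod_i\vol(K_i)$ is monotone, then iterate down to ellipsoids), or a Fourier/Bergman-kernel argument extending the Nazarov--Bianchi--Kelly proof of Blaschke--Santal\'o to $m$ functions — each run into genuine difficulties and, as far as I know, have not been carried through, which is precisely where the problem stands.
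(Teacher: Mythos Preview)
Your proposal is correct and follows exactly the paper's own derivation: the paper obtains Conjecture~\ref{KWBS} from Conjecture~\ref{KW} by taking $f_i(x)=v_n(2\pi)^{-n/2}e^{-\|x\|_{K_i}^2/2}$ and $\rho(t)=e^{-t/(m-1)}$, which is precisely your reduction. You also correctly identify the conditional nature of the argument and the unconditional case handled by Kolesnikov--Werner.
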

Conjecture  \ref{KW}  has been confirmed in \cite{KoW} for unconditional bodies; for $m\ge 3$, it was shown then that there is equality if and only if $K_i=B_2^n,$ for $i=1,\dots,m.$

This direction was further developed by Kalantzopoulos and Saroglou \cite{KaSa} who generalized the polarity condition (\ref{polKW}). For $2\le p\le m$ and  $ x_1, \dots, x_m \in \R^n,$ let
$$
{\mathcal S}_p(x_1,\dots, x_m)=\binom{m}{p}^{-1}\sum_{l=1}^n s_p(x_1(l), \dots, x_m(l)),   
$$
where $x_i=\sum_{l=1}^n x_i(l) e_l$ and $s_p$ is the elementary symmetric polynomial in $m$ variables of degree $p$. The case $p=2$ corresponds to the sum of scalar products, i.e. 
$${\mathcal S}_2(x_1,\dots, x_m)=\frac{2}{m(m-1)}\sum_{1\le i <j \le m} \langle x_i, x_j \rangle.$$ In \cite{KaSa} the following {\it $p$-Santal\'o conjecture} was proposed:
\begin{conj}\label{conKS} Let $2\le p\le m$ be two integers. If $K_1,\dots,K_m$ are symmetric convex bodies in $\R^n$, such that  
$$
{\mathcal S}_p(x_1,\dots, x_m) \le 1, \mbox{ for all   } x_i\in K_i,
$$
then $\prod_{i=1}^m \vol(K_i) \le \vol(B_p^n)^m,$ where $B_p^n$ is the unit ball of the $\ell^n_p$-norm. 
\end{conj}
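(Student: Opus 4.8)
\medskip
\noindent\textbf{A plan for attacking Conjecture \ref{conKS}.}
The natural strategy is the two-step scheme that settled the Blaschke--Santal\'o inequality and the multi-body inequality of Kolesnikov--Werner: first reduce to the case in which all the $K_i$ are unconditional with respect to a common orthonormal basis, and then prove the unconditional case by induction on the dimension, the base case $n=1$ being a classical inequality for symmetric functions.

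\emph{The base case $n=1$.} Each $K_i$ is a segment $[-a_i,a_i]$ with $a_i>0$, and since every monomial appearing in $s_p$ is simultaneously maximized at $x_i=a_i$, the hypothesis $\mathcal{S}_p\le 1$ is equivalent to $s_p(a_1,\dots,a_m)\le\binom{m}{p}$. Maclaurin's inequality for elementary symmetric polynomials then gives
$$
(a_1\cdots a_m)^{p/m}\le\frac{s_p(a_1,\dots,a_m)}{\binom{m}{p}}\le 1,
$$
hence $\prod_{i=1}^m\vol(K_i)=\prod_{i=1}^m 2a_i\le 2^m=\vol(B_p^1)^m$, with equality exactly when $a_1=\dots=a_m=1$.

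\emph{The unconditional case.} Assuming all $K_i$ unconditional, the cleanest route is to pass to the Gaussian/functional formulation, exactly as Conjecture~\ref{KWBS} is obtained from Conjecture~\ref{KW} (take $f_i(x)=v_n(2\pi)^{-n/2}e^{-\|x\|_{K_i}^2/2}$ and a suitable $\rho$), and then run a Pr\'ekopa--Leindler tensorization over the $n$ coordinates as in \cite{KoW}; for $p=2$ this is already the unconditional case of Conjecture~\ref{KW}, and the same scheme should carry over to general $p$ once the one-dimensional inequality above is in hand. Alternatively, one argues directly by induction on $n$: split each $K_i$ into its positive part and slice in the last coordinate; the homogeneity of degree $p$ of $s_p$ lets one renormalize each family of sections so as to satisfy the $(n-1)$-dimensional hypothesis, apply the inductive bound pointwise in the slicing parameter, and integrate.

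\emph{The reduction to the unconditional case, and the main obstacle.} One would like to show that for a coordinate hyperplane $H$ the Steiner symmetrals $S_HK_i$ satisfy $\prod_i\vol(K_i)\le\prod_i\vol(S_HK_i)$ (immediate) and still satisfy $\mathcal{S}_p\le 1$. When $p=m$ or $p=2$, the constraint is, coordinate-by-coordinate in the symmetrized direction, an affine (bilinear) inequality, so the averaging built into Steiner symmetrization combined with Brunn--Minkowski produces an inclusion of the type \eqref{steiner-inclusion} for the constraint sets; but for $2<p<m$ the polynomial $s_p$ is genuinely multilinear in the symmetrized variable and carries no duality interpretation, so there is no obvious reason the symmetrized bodies still verify $\mathcal{S}_p\le 1$. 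This is precisely where Conjecture~\ref{KW} itself remains open outside the unconditional class, and I expect this symmetrization step --- to be carried out either through a more flexible device (two-point symmetrization, or shadow systems along well-chosen directions, cf.\ Theorem~\ref{shadow}) or through a direct functional argument bypassing symmetrization entirely --- to be the crux. A further difficulty is the equality case: for $p=m=2$ the extremizers form the broad Hanner-type family (compare Conjecture~\ref{conjcube}), whereas for $3\le p\le m$ one expects only $K_1=\dots=K_m=B_p^n$, and propagating this rigidity through the induction needs extra care.
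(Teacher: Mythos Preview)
The statement you address is a \emph{conjecture}: the paper does not prove it, but only reports that Kalantzopoulos and Saroglou \cite{KaSa} have established it for $p=m$ in full generality, for all $2\le p\le m$ when every $K_i$ is unconditional, and for $p=2$ when $K_3,\dots,K_m$ are unconditional. There is therefore no proof in the paper to compare against, and you are right to present a strategy rather than a proof.

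Your outline is broadly sound and matches the known partial results. The one-dimensional base case via Maclaurin's inequality is correct. The unconditional case is indeed the part that is settled, and the tensorization/Pr\'ekopa--Leindler scheme you describe is in the spirit of \cite{KoW,KaSa}. You also correctly locate the genuine obstacle: passing from unconditional bodies to arbitrary symmetric bodies.

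Two corrections are in order. First, your remark on the equality case is mistaken: for $p=m=2$ the conjecture \emph{is} the Blaschke--Santal\'o inequality (an upper bound), and its extremizers are polar pairs of ellipsoids, not Hanner polytopes; Hanner polytopes arise in the \emph{lower} bound (Mahler's conjecture, Conjecture~\ref{conjcube}). The paper states that for $p=2$ and $m\ge3$ equality holds only when all $K_i=B_2^n$; the enlargement of the extremal family at $m=2$ comes from the $GL(n)$-covariance of ordinary polarity, not from any Hanner phenomenon. Second, your account of why $p=2$ and $p=m$ are more tractable is imprecise: $s_p$ is multilinear (degree one in each scalar variable) for \emph{every} $p$, so linearity in a single symmetrized coordinate is not what distinguishes these endpoints. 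What is special about $p=2$ is the identity $2s_2=(\sum x_i)^2-\sum x_i^2$, which ties the constraint to a genuine polarity (that of the Minkowski sum); what is special about $p=m$ is that $s_m$ factors completely, so the constraint decouples coordinatewise. For intermediate $p$ neither structure is available, which is the real content of the obstruction.
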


Kalantzopoulos and Saroglou \cite{KaSa} were able to confirm Conjecture  \ref{conKS} when $p=m$, and in the case of unconditional convex bodies for all $p=2,\dots, m$. Moreover when $p=2$, it is enough to assume that only $K_3,\dots, K_m$ are unconditional. In all of those known cases, the conjectured inequality is actually sharp for $K_1=\dots=K_m=B_p^n$. A functional analog of Conjecture  \ref{conKS} was also proposed in \cite{KaSa}.

\section{Links to other inequalities}\label{linsec}

In this section we present just a sample of connections of the volume product to other inequalities in convex geometry. As before, we refer to the books  \cite{AGM1, AGM2, Ga, Ga2, Gru, Kol, Pi, Sc, Tom} and especially to the amazing diagram of connections of different open problems in convex geometry constructed by Richard Gardner in \cite[Figure 1]{Ga2}.

\subsection{Slicing Conjecture} Klartag \cite{Kl2} found a connection between a sharp version of Bourgain's slicing conjecture and Mahler's conjecture for general convex bodies (conjecture~\ref{mahler}-. The covariance matrix of a convex body $K$ in $\R^n$ is the $n\times n$ matrix ${\rm Cov}(K)$ defined by
$$
{\rm Cov}(K)_{i,j}=\frac{\int_K x_ix_jdx}{\vol(K)}- \frac{\int_K x_idx}{\vol(K)} \frac{\int_K x_jdx}{\vol(K)}
$$
The isotropic constant $L_K$ is  defined as $L_K^{2n}=\rm{det}({\rm Cov}(K)) \vol(K)^{-2}.$
It is well-known that $L_K$ is bounded from below by an absolute positive constant which is reached for ellipsoids. Bourgain’s slicing problem  asks whether for some universal constant $C>0$, one has $L_K \le C $ for every convex body $K$. The name {\it slicing conjecture} comes from the following  very interesting equivalent reformulation: is it true that for some universal $c>0$,  every convex body of volume one in $\R^n$ has  an hyperplane section with $(n-1)$-dimensional volume  greater than $c$? (see \cite{MiP} for other equivalent statements). The  boundedness of $L_K$ by an absolute constant is still  an open question.  Bourgain \cite{Bou} proved that $L_K\le  Cn^{1/4}$ up to a logarithmic factor, which was removed by Klartag \cite{Kl3}.
Chen \cite{Ch} proved that $L_K\le C_\varepsilon n^{\varepsilon}$ for every $\varepsilon>0$. Then,  Klartag and Lehec \cite{KlL} established
a polylogarithmic bound $L_K\le C\log^5n$, which was then further improved to $L_K\le C\log^{2.2}n$ by Jambulapati, Lee and Vempala \cite{JLV}. A {\it strong version of the slicing conjecture} asks the following:
\begin{conj}\label{strongBourgain}For any convex body $K$ in $\R^n$ one has
\begin{equation}\label{bourgainstron}
L_K \le L_{\Delta_n}=\frac{(n!)^{1/n}}{(n+1)^{\frac{n+1}{2n}}\sqrt{n+2}}.
\end{equation}
\end{conj}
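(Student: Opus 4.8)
The plan is to follow the route, essentially due to Klartag, that ties the strong slicing conjecture to Mahler's conjecture (Conjecture~\ref{mahler}) for general convex bodies through the logarithmic Laplace transform. Given a convex body $K\subset\R^n$ with barycenter at the origin, one attaches to it the convex function $\Lambda_K(\xi)=\log\int_K e^{\langle\xi,x\rangle}dx$, whose value, gradient and Hessian at $0$ are $\log\vol(K)$, $0$ and $\mathrm{Cov}(K)$ respectively, and whose Legendre transform $\L\Lambda_K$ is again convex with $\L\L\Lambda_K=\Lambda_K$. The first step is to show that the isotropic constant is governed, with the exact dimensional constant attained by the simplex, by the functional volume product of $\Lambda_K$: a Laplace-type analysis near $0$, together with the fact that $e^{-\L\Lambda_K}$ is a log-concave function supported in $\overline{K}$ with maximal value $\vol(K)$, should yield $\int_{\R^n}e^{-\Lambda_K(\xi)}d\xi\approx (2\pi)^{n/2}\vol(K)^{-1}(\det\mathrm{Cov}(K))^{-1/2}$ and $\int_{\R^n}e^{-\L\Lambda_K(y)}dy\approx\vol(K)$, where the implicit factors are sharp for $K=\Delta_n$. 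Feeding $L_K^{2n}=\det\mathrm{Cov}(K)\,\vol(K)^{-2}$ into these relations reduces \eqref{bourgainstron} to a sharp lower bound for $\int e^{-\Lambda_K}\int e^{-\L\Lambda_K}$.

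The second step is to apply the conjectural sharp functional Mahler inequality, Conjecture~\ref{mahler-func-conj}, to $\varphi=\L\Lambda_K$: since $\L\varphi=\Lambda_K$ this gives $\int e^{-\L\Lambda_K}\int e^{-\Lambda_K}\ge e^n$, and the equality case of Conjecture~\ref{mahler-func-conj} (the exponential-type functions) is exactly the one produced by $K=\Delta_n$. Combining with the first step and carefully propagating all constants --- which forces the extremal quantity $L_{\Delta_n}=(n!)^{1/n}/((n+1)^{(n+1)/(2n)}\sqrt{n+2})$ to emerge --- yields $L_K\le L_{\Delta_n}$. Equivalently, one can skip the functional layer and invoke Mahler's conjecture for bodies directly, via the correspondence between convex functions in $\R^n$ and convex bodies in $\R^{n+m}$ used in Remark~\ref{bodyfun}: Klartag's construction turns a volume-one $K\subset\R^n$ into a family of bodies whose volume products are, up to the simplex-exact factor, $L_K^{-2n}$ times an explicit dimensional constant, so Mahler's lower bound for those bodies forces the stated bound on $L_K$.

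The genuine obstacle is that every input above is itself open. The functional Mahler conjecture is proved only for $n=1$ and for unconditional or coordinatewise non-decreasing $\varphi$ (the partial-results theorem following Conjecture~\ref{mahler-func-conj-even}), the body version of Mahler's conjecture is open for $n\ge4$, and the two special-dimension proofs available --- the planar shadow-system argument removing vertices and the dimension-three argument based on the equipartition Theorem~\ref{thm:equipart} --- both use features that do not survive in higher dimensions. So this strategy does not resolve Conjecture~\ref{strongBourgain} so much as exhibit it as a consequence of sharp Mahler-type lower bounds; the missing ingredient is a dimension-free proof of the minimum of the (functional) volume product, together with the delicate sharp-constant bookkeeping that turns ``$\approx$'' into an equality tight at $\Delta_n$.

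A complementary, more self-contained line of attack would start from the equivalent formulation ``a convex body of volume one has a hyperplane section of $(n-1)$-volume $\ge c$'' and try to show, by a monotonicity argument under a suitable symmetrization, that after the appropriate affine normalization the simplex maximizes $L_K$, i.e.\ minimizes $\max_H\vol_{n-1}(K\cap H)$ relative to $\det\mathrm{Cov}(K)$. Here the difficulty is exactly the absence of a symmetrization that improves the isotropic constant while degenerating to a simplex --- precisely the phenomenon that has kept even the non-sharp slicing conjecture open, so I would expect this route to be at least as hard as the first one.
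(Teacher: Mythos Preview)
The statement you are addressing is Conjecture~\ref{strongBourgain}, the \emph{strong slicing conjecture}. It is open, and the paper does not prove it. The paper introduces it only to state Klartag's theorem immediately afterwards: Conjecture~\ref{strongBourgain} \emph{implies} Mahler's Conjecture~\ref{mahler}. There is therefore no ``paper's own proof'' to compare against.

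Your proposal runs in the opposite direction from Klartag: you try to deduce the strong slicing bound from the functional Mahler conjecture (Conjecture~\ref{mahler-func-conj}). Klartag's argument goes the other way --- he proves the covariance inequality $\mathrm{Cov}(K^*)\ge(n+2)^{-2}\mathrm{Cov}(K)^{-1}$ at a volume-product local minimizer, obtains \eqref{Klartagmin}, and then \emph{assumes} $L_K,L_{K^*}\le L_{\Delta_n}$ to bound $\P(K)$ from below. Your reversal would require a sharp two-sided comparison between $\int e^{-\Lambda_K}$ and $(2\pi)^{n/2}\vol(K)^{-1}(\det\mathrm{Cov}(K))^{-1/2}$; the Laplace heuristic you invoke gives only an asymptotic ``$\approx$'', not an inequality with constants tight at the simplex, and no such sharp inequality is known. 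The claim that the equality case of Conjecture~\ref{mahler-func-conj} is ``exactly the one produced by $K=\Delta_n$'' is also unjustified: the log-Laplace transform of the uniform measure on a simplex is not of the product-exponential form described in that conjecture's equality clause.

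You already acknowledge in your third paragraph that every input is open, so what you have written is an outline of a conditional strategy rather than a proof. Since the target itself is an open conjecture, that is all one can honestly offer; but you should be aware that the direction of implication you are proposing is not the one established in the literature, and the sharp-constant bookkeeping you defer is precisely the missing content.
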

Let $K$ be a local minimizer of the volume product among the set of all convex bodies in $\R^n$ endowed with the Hausdorff distance, then Klartag \cite{Kl2} was able to   prove that 
$${\rm Cov}(K^*) \ge (n+2)^{-2}{\rm Cov}(K)^{-1}.$$ 
Taking the determinant and raising to the power $1/n$, one gets
\begin{equation}\label{Klartagmin}
\frac{1}{n+2} \le L_K  L_{K^*}  \VP(K)^{1/n}.
\end{equation}
Thus combining   (\ref{Klartagmin}) and (\ref{bourgainstron}), 
$$
\frac{1}{n+2}  \le L_K  L_{K^*}  \VP(K)^{1/n} \le \frac{(n!)^{2/n}}{(n+1)^{\frac{n+1}{n}}(n+2)}\VP(K)^{1/n}.
$$
Thus, we proved the following theorem:
\begin{thm}  (Klartag)   
 The strong version  of Bourgain’s slicing conjecture given in  Conjecture \ref{strongBourgain}
implies Conjecture \ref{mahler} (Mahler’s conjecture) for general convex bodies.
\end{thm}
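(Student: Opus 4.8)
The plan is to exploit the fact that the simplex $\Delta_n$ is conjecturally the extremizer of both the volume product and the isotropic constant, so that chaining Klartag's covariance inequality at a volume-product minimizer with the strong slicing bound collapses exactly to Mahler's bound.

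First I would reduce to a minimizer. By the affine invariance of $K\mapsto\VP(K)$ and John's theorem, as recalled in Section~\ref{sec1980}, the infimum of $\VP$ over all convex bodies in $\R^n$ is attained, say at some $K_0$ with $B_2^n\subseteq K_0\subseteq nB_2^n$; being a global minimizer, $K_0$ is in particular a local minimizer of $\VP$ in the Hausdorff metric. Hence Klartag's comparison \cite{Kl2} applies to $K_0$, giving ${\rm Cov}(K_0^*)\ge (n+2)^{-2}{\rm Cov}(K_0)^{-1}$. Taking determinants, raising to the power $1/n$, and using $L_K^{2n}=\det({\rm Cov}(K))\,\vol(K)^{-2}$ together with $\VP(K_0)=\vol(K_0)\vol(K_0^*)$, this becomes inequality \eqref{Klartagmin}:
\[
\frac{1}{n+2}\le L_{K_0}\,L_{K_0^*}\,\VP(K_0)^{1/n}.
\]

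Next I would apply the strong slicing conjecture (Conjecture~\ref{strongBourgain}) separately to $K_0$ and to $K_0^*$, obtaining
\[
L_{K_0}\,L_{K_0^*}\le L_{\Delta_n}^2=\frac{(n!)^{2/n}}{(n+1)^{\frac{n+1}{n}}(n+2)}
\]
from \eqref{bourgainstron}. Combining with the previous display and cancelling the factor $\tfrac{1}{n+2}$ gives $\VP(K_0)^{1/n}\ge (n+1)^{\frac{n+1}{n}}/(n!)^{2/n}$, that is
\[
\VP(K_0)\ge\frac{(n+1)^{n+1}}{(n!)^2}=\VP(\Delta_n).
\]
Since $K_0$ minimizes $\VP$, every convex body $K\subset\R^n$ then satisfies $\VP(K)\ge\VP(\Delta_n)$, which is the bound asserted in Conjecture~\ref{mahler}.

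The one genuinely heavy ingredient, Klartag's covariance inequality ${\rm Cov}(K^*)\ge (n+2)^{-2}{\rm Cov}(K)^{-1}$ at a local minimizer, is quoted from \cite{Kl2}; within the present argument the only delicate points are ensuring a minimizer exists (so the ``local minimizer'' hypothesis is legitimately available) and the bookkeeping of the constant $L_{\Delta_n}$, obtained by computing $\vol(\Delta_n)$ and $\det({\rm Cov}(\Delta_n))$ for the regular simplex. The main obstacle to upgrading this to the \emph{full} Conjecture~\ref{mahler} is the equality case: sharpness in \eqref{Klartagmin} and in $L_{\,\cdot\,}\le L_{\Delta_n}$ need not jointly force $K$ to be a simplex, so recovering $K=\Delta_n$ at equality would require a separate rigidity argument not contained in this chain of estimates.
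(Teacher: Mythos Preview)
Your proposal is correct and follows essentially the same route as the paper: reduce to a (local) minimizer, quote Klartag's covariance inequality, take determinants to reach \eqref{Klartagmin}, and then bound $L_{K_0}L_{K_0^*}$ by $L_{\Delta_n}^2$ via Conjecture~\ref{strongBourgain}. You are slightly more explicit than the paper in justifying the existence of a global minimizer before invoking the local-minimizer hypothesis, and your closing remark that the equality case of Conjecture~\ref{mahler} is \emph{not} recovered by this chain is a valid and useful observation that the paper's formulation glosses over.
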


 In connection with his proof of the Bourgain-Milman inequality, Kuperberg  asked in \cite{Ku2}  whether the quantity
$$\frac{1}{\vol(K)\vol(K^*)} \int_K\int_{K^*}\langle x,y\rangle^2dxdy
$$
 is maximized for ellipsoids in the class of convex symmetric bodies $K\subset\R^n$.  Alonso-Guti\'errez \cite{AG} proved that this conjecture implies both the Blaschke-Santaló inequality and the hyperplane conjecture and that it holds true for  $B_p^n$, the unit ball of $\ell_p^n$,  for  $p\ge 1$. The connection to the hyperplane conjecture was also studied in \cite{Gi}. Kuperberg had not much hope for his conjecture and
 Klartag \cite{Kl2} showed that it is  false in high dimension, even in the case of unconditional bodies.

\subsection{Symplectic geometry and Viterbo's conjecture}   Artstein-Avidan, Karasev and Ostrover in \cite{AKO} discovered an amazing connection between the volume product and symplectic geometry. Let $(X,\omega)$ be a symplectic manifold: $X$ is a smooth manifold with a closed non-degenerate two-form $
\omega$. For instance, $(\R^{2n}, \omega_{st})$, where $\R^{2n}=\R^n_p\times \R^n_q$ and $\omega_{st}=\sum d p_i \wedge d q_i$. A core fact in  symplectic geometry states that symplectic manifolds have no local invariants (except the dimension). This, clearly,  makes the structure very different from  that of Riemannian manifolds.  The first examples of global symplectic invariants were introduced by Gromov \cite{Gro} and are known as Gromov’s “non-squeezing theorem”. Gromov's work inspired the introduction of global symplectic invariants - symplectic capacities - which may be seen as a way to measure the symplectic size of sets in $\R^{2n}$. More precisely, a {\it symplectic capacity} $c$ on $(\R^{2n},  \omega_{st})$ is a
mapping 
 $c: {\mathcal S}(\R^{2n}) \to \R_+$, 
where ${\mathcal S}(\R^{2n})$
is the set of all  subsets of $\R^{2n}$,  
which satisfies the following conditions
\begin{itemize}
\item Monotonicity: $c(U)\le c(V),$ for all $U\subset V.$
\item Conformality: $c(\phi(U))=|\alpha|c(U)$, for all diffeomorphism $\phi$ such that $\phi^* \omega_{st}=\alpha \omega_{st}.$
\item Normalization: $c(B^{2n}_2)= c(B^2_2 \times \R^{2(n-1)})= \pi.$
\end{itemize}

The following is the conjecture of  Viterbo \cite{Vi} for symplectic capacities of convex bodies. 
\begin{conj}\label{vitc} For any symplectic capacity $c$ and any convex body $\Sigma$ in $\R^{2n}$, one has
$$
\frac{c(\Sigma)}{c(B_2^{2n})} \le \left(\frac{\vol_{2n}(\Sigma)}{\vol_{2n}(B_2^{2n})}\right)^\frac{1}{n}.
$$
\end{conj}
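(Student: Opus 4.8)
The plan is to reduce the asserted inequality, which must hold for \emph{every} symplectic capacity $c$, to a single computable invariant and then to a sharp isoperimetric statement. By the monotonicity and normalization axioms each capacity is bounded above by the cylindrical capacity $c_Z$ (measured by the smallest symplectic cylinder $B^2(r)\times\R^{2n-2}$ into which $\Sigma$ embeds), and on convex bodies it is conjectured that $c_Z$ coincides with the Ekeland--Hofer--Zehnder capacity $c_{EHZ}$; granting this, and since the right-hand side of the conjectured inequality is independent of $c$, it suffices to bound $c_{EHZ}(\Sigma)$. For $\Sigma$ convex with smooth boundary, $c_{EHZ}(\Sigma)$ equals the minimal symplectic action $\oint p\,dq$ over the closed characteristics of $\partial\Sigma$, equivalently the minimum of Clarke's dual action functional, which recasts the problem as a concrete variational inequality.

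Next, by the conformality axiom one rescales so that $\vol_{2n}(\Sigma)=\vol_{2n}(B_2^{2n})$; the claim then becomes $c_{EHZ}(\Sigma)\le\pi=c_{EHZ}(B_2^{2n})$, i.e.\ among convex bodies of fixed volume the round ball maximizes the least closed-characteristic action --- a sharp systolic inequality in the spirit of the isosystolic inequalities of Section \ref{linsec}. The natural attack is a symmetrization argument: construct a one-parameter family $\Sigma_t$ joining $\Sigma$ to a ball along which $\vol_{2n}(\Sigma_t)$ stays constant while $t\mapsto c_{EHZ}(\Sigma_t)$ is monotone, for instance by transporting a minimizing characteristic through a shadow-system deformation as in Section \ref{secshadow} and exploiting the convexity mechanism behind Theorem \ref{shadow}, or by using the good behaviour of $c_{EHZ}$ under Lagrangian products and Minkowski sums. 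One would then have to describe the equality case, which is subtle: the conjectured extremizers are not only symplectic images of balls but also non-smooth Lagrangian products such as $\Delta_n\times\Delta_n^*$, so the equality analysis is intertwined with that of the Mahler conjecture.

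The main obstacle is precisely that no symmetrization is known to control a symplectic capacity: $c_{EHZ}$ is not an affine invariant, and the combinatorics of the minimizing characteristic can jump under the usual convex operations, so producing even one volume-preserving deformation along which $c_{EHZ}$ is monotone is the heart of the open problem. A more modest, and currently feasible, route restricts to Lagrangian products $\Sigma=K\times T$ with $K,T\subset\R^n$ convex: there $c_{EHZ}(K\times T)$ is the length of the shortest closed $(K,T)$-Minkowski billiard trajectory, $\vol_{2n}(K\times T)=\vol_n(K)\vol_n(T)$, and for symmetric $K$ one has $c_{EHZ}(K\times K^*)=4$, whence Viterbo's inequality for this family is \emph{equivalent} to $\vol_n(K)\vol_n(K^*)\ge 4^n/n!$, that is Conjecture \ref{conjcube}; likewise the general-convex form of Viterbo implies Conjecture \ref{mahler}. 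Thus even the Lagrangian-product case is as hard as Mahler, hence open for $n\ge4$, and the genuinely non-product case is strictly harder, so a complete proof would require a genuinely new tool --- most plausibly a symplectic analogue of Steiner symmetrization compatible with Theorem \ref{shadow}, or a direct variational bound on Clarke's functional that degenerates sharply to the Euclidean ball.
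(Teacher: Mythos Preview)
The statement you were asked to prove is not a theorem but an \emph{open conjecture}: the paper explicitly states that Conjecture~\ref{vitc} ``is open even for $n=2$'' and only records that it holds for ellipsoids and up to a universal multiplicative constant. There is therefore no proof in the paper to compare against; the paper's role is to state the conjecture and then use it as a hypothesis in Theorem~\ref{systM} (Viterbo $\Rightarrow$ symmetric Mahler), not to establish it.

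Your write-up is honest about this --- you repeatedly flag that the key steps are themselves open --- but as a proof it has two genuine gaps that you correctly identify and do not close. First, reducing to a single capacity by invoking the conjectured equality $c_Z=c_{EHZ}$ on convex bodies replaces one open conjecture with another of comparable depth; without it, you would need to bound \emph{every} capacity, and the argument collapses. Second, and more seriously, the proposed symmetrization step is a wish rather than a construction: no volume-preserving deformation is known along which any symplectic capacity is monotone, and as you note $c_{EHZ}$ is not affine-invariant, so the shadow-system machinery of Section~\ref{secshadow} does not transfer. Your observation that the Lagrangian-product case $K\times K^*$ is already equivalent to Conjecture~\ref{conjcube} (this is exactly the content of the paper's discussion following Theorem~\ref{systM}, via $c_{HZ}(K\times K^*)=4$ from \cite{AKO}) is correct and shows that even your ``modest'' route is at least as hard as Mahler; this is a sound diagnosis of why the problem is open, but it is not a proof.
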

Conjecture \ref{vitc} is  of isoperimetric type: indeed,   it claims that among all convex bodies  in $\R^{2n}$ of a given fixed volume, the Euclidean ball of the same volume has the maximal symplectic capacity. It is open even for $n=2$, but it holds for certain classes of convex bodies,
including ellipsoids  \cite{H} and  up to a universal multiplicative 
constant  \cite{AMO}. 
The following was proved in \cite{AKO}.
\begin{thm}
Conjecture \ref{vitc} implies Conjecture 2.
\end{thm}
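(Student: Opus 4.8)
The goal is to deduce Conjecture 2 (the symmetric Mahler conjecture) from Conjecture \ref{vitc} (Viterbo's conjecture for symplectic capacities). The bridge between the two worlds is provided by a specific symplectic capacity whose value on a particular convex body in $\R^{2n}$ can be computed explicitly in terms of the volume product in $\R^n$. The natural choice is the \emph{Ekeland--Hofer--Zehnder capacity} $c_{EHZ}$, which for a smooth convex body $\Sigma$ equals the minimal symplectic action over closed characteristics on $\partial\Sigma$. The key geometric input is that for a body of the form $K \times K^* \subset \R^n_q \times \R^n_p$ (a Lagrangian product), the closed characteristics of $\partial(K\times K^*)$ are in bijection with certain ``billiard trajectories'' in $K$ with reflection rule governed by $K^*$, and one has the clean identity
$$
c_{EHZ}(K\times K^*) = 4
$$
whenever $K$ is a symmetric convex body. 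This computation, due to Artstein-Avidan, Karasev and Ostrover, is the heart of the matter; I would cite it as the main lemma rather than reprove it, since it rests on the combinatorial structure of minimal billiard orbits.

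\textbf{The steps.} First I would record that $c_{EHZ}$ is indeed a symplectic capacity in the sense of the list above (monotonicity, conformality, the normalization $c_{EHZ}(B_2^{2n}) = c_{EHZ}(B_2^2\times\R^{2n-2}) = \pi$); these are classical. Second, I would invoke the identity $c_{EHZ}(K\times K^*) = 4$ for $K$ symmetric convex in $\R^n$. Third, compute the volume: $\vol_{2n}(K\times K^*) = \vol_n(K)\vol_n(K^*) = \VP(K)$, since $K$ is symmetric so its Santaló point is the origin. Fourth, apply Conjecture \ref{vitc} with $\Sigma = K\times K^*$:
$$
\frac{c_{EHZ}(K\times K^*)}{c_{EHZ}(B_2^{2n})} \le \left(\frac{\vol_{2n}(K\times K^*)}{\vol_{2n}(B_2^{2n})}\right)^{1/n},
$$
that is, $\frac{4}{\pi} \le \left(\frac{\VP(K)}{v_{2n}}\right)^{1/n}$ where $v_{2n}=\vol(B_2^{2n}) = \pi^n/n!$. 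Raising to the $n$-th power gives $\VP(K) \ge \left(\tfrac{4}{\pi}\right)^n v_{2n} = \tfrac{4^n}{n!} = \VP(B_\infty^n)$, which is exactly Conjecture 2. (One should note that the normalization constants work out precisely because $c_{EHZ}(K\times K^*) = 4$ while $v_{2n}/v_n^2$ carries the factorial that matches the cube's volume product; this is the ``miracle'' that makes the reduction sharp.)

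\textbf{The main obstacle.} The genuinely hard step is the identity $c_{EHZ}(K \times K^*) = 4$. Proving $\le 4$ amounts to exhibiting an explicit closed characteristic of action $4$ on $\partial(K\times K^*)$ — one produces a ``bouncing'' trajectory $q(t)$ in $K$ that, together with the dual curve $p(t)$ tracing $\partial K^*$, closes up after action $4$; here one uses the support function description of $K^z$ from Section~\ref{sec1980}. Proving $\ge 4$ is the subtle direction: one must show no closed characteristic has smaller action, which uses the variational characterization of $c_{EHZ}$ and a clever choice of test loop, combined with a Cauchy--Schwarz / isoperimetric-type estimate relating $\int \langle q, \dot p\rangle$ to $\vol(K)\vol(K^*)$ via the fact that admissible $(q,p)$ satisfy $\langle q,p\rangle \le 1$ pointwise (so $q/\|q\|\in K$ forces $p\in K^*$). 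Passing from the smooth case to general symmetric convex $K$ is then a routine approximation argument, since both $\VP$ and $c_{EHZ}$ are continuous in the Hausdorff metric. I would also remark that the $\ge 4$ direction is where the \emph{symmetry} of $K$ is essential; for non-symmetric bodies the analogous computation (with the Santaló point) yields the constant associated to the simplex, giving instead the connection to Conjecture \ref{mahler}.
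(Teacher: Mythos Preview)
Your proposal is correct and follows essentially the same route as the paper: cite the Artstein-Avidan--Karasev--Ostrover identity $c_{HZ}(K\times K^*)=4$ for symmetric $K$, plug $\Sigma=K\times K^*$ into Viterbo's inequality with the normalization $c_{HZ}(B_2^{2n})=\pi$ and $\vol(B_2^{2n})=\pi^n/n!$, and read off $\VP(K)\ge 4^n/n!$. The paper's argument is exactly this computation (using $c_{HZ}$ rather than $c_{EHZ}$, which agree on convex bodies); your additional commentary on how one would prove the capacity identity and the remark about the non-symmetric case are extra context not present in the paper's brief treatment.
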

More precisely, it was proved in \cite{AKO} that for any convex symmetric body $K\subset \R^n$, $c_{HZ}(K\times K^*)=4$,  where $c_{HZ}$ denotes the Hofer-Zehnder capacity, which is one of the important symplectic capacities.  This fact, together with Conjecture \ref{vitc} and the normalization property of $c_{HZ}$ immediately gives an affirmative answer to conjecture 2:
$$
\frac{4^n}{\pi^n} = \left(\frac{c_{HZ}(K \times K^*)}{c_{HZ}(B_2^{2n})}\right)^n \le \frac{\vol_{2n}(K \times K^*)}{\vol_{2n}(B_2^{2n})} = \frac{n!\vol_{2n}(K \times K^*)}{\pi^n}.
$$
We refer to \cite{AKO} and \cite{O} for more details on these  connections. The  connections of Conjecture 2 with symplectic geometry were further continued in \cite{ABKaS, BeKa, Ka, KaS}. In \cite{Ru}, Viterbo’s conjecture was connected  with
Minkowski versions of  worm problems, inspired by the well-known Moser worm
problem from geometry. For the special case of Lagrangian products, this relation provides further links to systolic Minkowski billiard inequalities and Mahler’s conjecture.

\subsection{Funk geometry}\label{Funk} A very interesting connection of the volume product with Funk geometry was recently discovered by Faifman \cite{Fa}. We refer to \cite{PT} for a detailed introduction to  Finsler manifolds and Funk geometry. We will remind a few of the most basic ideas.

A non-reversible Finsler manifold $(M,F)$ is a smooth manifold $M$ equipped with a smooth function $F$ on the tangent bundle of $M$ which, when restricted on any tangent space, is the gauge of some convex body. The crucial difference with Riemannian geometry is the lack of inner product. The tangent unit ball at a point $x\in M$ is denoted by $B_xM$ and consists of all vectors $v$ in the tangent space  $T_xM$ such that $F(x,v)\le 1.$ 

For a convex body $K$ in a  fixed affine space, the Funk metric on the interior of $K$ is given by  $B_xK =K$, i.e. at  any point $x$ in interior of $K$, the body $K$ with origin at $x$ is the unit ball. We done in the following way: Consider $x, y\in\inte(K)$ and let $R(x,y)$ be the ray starting at $x$ passing through $y$. Let $a(x,y)=R(x,y)\cap \partial K$, then the Funk metric, defined for $x\not=y\in \inte(K)$, is
$$
d_K^F(x,y)=\log\frac{|x-a(x,y)|}{|y-a(x,y)|},
$$
and $d_K^F(x,x)=0.$ The Funk metric is projective, i.e. straight segments are geodesics. The outward ball of radius $r>0$ and center  $z \in \inte(K)$ is 
$$
B^F_K(z,r)=\{x\in \inte(K): d_K^F(z,x) \le r\}=(1-e^{-r})(K-z)+z.
$$
 The Holmes-Thompson volume of $A \subset \inte(K)$ is defined as
$$
\vol_K^F(A)=\frac{1}{v_n}\int_A \vol(K^x) dx.
$$
 Asymptotically as $r\to 0$, the  volume of $B^F_K(z,r)$ behaves as  $v_n^{-1}\vol_{2n}(K\times K^z) r^n$. It was also shown in \cite{BBV} that for a strictly convex and smooth  body $K$, when $r\to+\infty$, the volume of $B^F_K(z,r)$ behaves as  $c_ne^{\frac{n-1}{2}r}\mathcal{A}(K,z)$, where $c_n>0$  depends only on $n$  and 
$\mathcal{A}(K,z)$ is the centro-affine
surface area of  $K$ defined by:

$$ \mathcal{A}(K,z)= \int_{\partial K}  \frac{\kappa^{1/2}_K(x)}{\langle x-z, n_K(x)\rangle^{(n-1)/2}} dx,
$$
where $\kappa_k(x)$ is Gauss curvature of $\partial K$ at point $x$ and $n_K(x)$ is outer normal vector, note that $\mathcal{A}(K,0)=\mathcal{A}(K)$.
The following duality relation for $\vol_K^F$, for centrally symmetirc $K$, is proved in \cite{Fa}:
$$
\vol_K^F(B^F_K(0,r))=\vol_{K^*}^F(B^F_{K^*}(0,r)).
$$
The existence of an analog of the Santal\'o point $s(K)$ of a convex body $K$ in the Funk geometry was proved in \cite{FaVW}: 
For any $r>0$, there is a unique point $s_r(K)\in\inte(K)$ that minimizes the Funk volume of $B_K^F(q,r)$. One has $s_r(K)=0$ for symmetric $K$ and $s_r(K)\to s(K)$ as $r\to 0$.  Let  $$
M_r(K)=v_n\vol_K^F(B_K^F(s_r(K),r))
$$
The following conjecture was proposed in \cite{Fa}:
\begin{conj}\label{conF} For all $r>0$, 
$M_r(K)$ is  maximal when $K$ is an ellipsoid.
\end{conj}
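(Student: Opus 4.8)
The plan is to reduce Conjecture \ref{conF} to a monotonicity statement under Steiner symmetrization, following the route to the Blaschke-Santal\'o inequality \eqref{BSE} described in Section \ref{steiner-mp}, and to view the conjecture as the common refinement of its two limiting cases, both of which are already known. First I would note that $M_r$ is continuous and affine invariant, so it is enough to treat smooth, strictly convex $K$; and that, setting $\lambda=1-e^{-r}\in(0,1)$, the outward Funk ball is the homothet $B_K^F(z,r)=\lambda(K-z)+z$, so that
\[
M_r(K)=\int_{\lambda(K-s_r(K))+s_r(K)}\vol(K^x)\,dx.
\]
As $r\to0$ one has $M_r(K)=\lambda^n\VP(K)(1+o(1))$, so Conjecture \ref{conF} degenerates to \eqref{BSE}; as $r\to+\infty$, after dividing by $v_nc_ne^{(n-1)r/2}$, it degenerates to the statement that $\min_z\mathcal{A}(K,z)$ is maximal for ellipsoids, which is the centro-affine isoperimetric inequality. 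Thus Conjecture \ref{conF} interpolates between two affine-isoperimetric inequalities, which suggests that a single symmetrization argument should cover the whole one-parameter family.

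Second, the core step I would attempt is $M_r(K)\le M_r(S_HK)$ for every hyperplane $H$. Two ingredients are needed: how the integrand $x\mapsto\vol(K^x)=\vol\big((K-x)^*\big)$ transforms, and how the domain of integration transforms. For the integrand, the Meyer-Pajor inclusion \eqref{steiner-inclusion}, applied slice-by-slice to the translated polar bodies $(K-x)^*$ as $x$ ranges over the Funk ball, together with Brunn-Minkowski for hyperplane sections, should show that the joint contribution of a point $x$ and of its $H$-reflection does not decrease, exactly as in Step 1 of Section \ref{steiner-mp}. For the domain, one uses that $S_H$ takes the homothet $\lambda(K-z)+z$ to (essentially) the corresponding homothet of $S_HK$, up to the displacement of the Funk-Santal\'o point; controlling this displacement calls for first proving a "deficient" version in the spirit of the theorem of Meyer and Pajor \cite{MP2}, so that $H$ may be taken through, or controllably near, $s_r(K)$. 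Granting $M_r(K)\le M_r(S_HK)$, one then iterates an appropriate sequence of Steiner symmetrizations converging to $B_2^n$, as in Step 2 of Section \ref{steiner-mp}.

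A second, possibly more flexible, route is to extend Theorem \ref{shadow}: I would conjecture that along any shadow system $(K_t)$ the map $t\mapsto M_r(K_t)^{-1}$ is convex, and then deduce the result from the description of Steiner symmetrization as an even, volume-constant shadow system given in Remark \ref{rk:steiner-shadow}. Proving such convexity would mean lifting the shadow system to $\R^{n+1}$ as in Section \ref{secshadow} and running the Campi-Gronchi / Meyer-Reisner argument on the coupled object formed by $K_t$ together with its nested homothet $\lambda(K_t-s_r(K_t))+s_r(K_t)$; a Busemann-type inequality for this integrated quantity would be the crux.

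In either approach the main obstacle is the Funk-Santal\'o point $s_r(K)$: unlike the classical Santal\'o point it is defined through the entire Funk ball, it moves under symmetrizations and shadow motions, and for $r$ bounded away from $0$ and $\infty$ there is no a priori description of it. I would therefore first settle the centrally symmetric case, in which $s_r(K)=0$ for all $r$ and the homothet is centered, so that \eqref{steiner-inclusion} applies directly and one may even hope for a Fourier / Paley-Wiener argument in the style of Bianchi and Kelly \cite{BK}, exploiting the duality $M_r(K)=M_r(K^*)$ recorded above; the non-symmetric case would then be reached by feeding in the deficient Meyer-Pajor estimate. Even for symmetric bodies, however, the conclusion cannot be a formal consequence of \eqref{BSE}, since $M_r$ couples $K$ with a scaled copy of itself, so a genuinely new inequality at each scale $r$ appears to be unavoidable.
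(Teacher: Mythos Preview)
The statement is a \emph{conjecture}: the paper does not prove it, and neither does your proposal. What the paper records is only a partial result, due to Faifman \cite{Fa}: Conjecture~\ref{conF} holds when $K$ is unconditional. The route described there is entirely different from yours. Using the formula $\vol(K^x)=\int_{K^*}(1-\langle x,y\rangle)^{-(n+1)}dy$ from Section~\ref{sec1980} and expanding the kernel as a power series, one gets for a symmetric body
\[
M_r(K)=\sum_{j\ge0}\binom{n+2j}{2j}\lambda^{n+2j}\int_K\int_{K^*}\langle x,y\rangle^{2j}\,dx\,dy,
\]
so Conjecture~\ref{conF} would follow term-by-term from the generalized Ball conjecture $\int_K\int_{K^*}\langle x,y\rangle^{2j}dxdy\le\int_{B_2^n}\int_{B_2^n}\langle x,y\rangle^{2j}dxdy$ for all $j\ge0$; Faifman proves this family of moment inequalities for unconditional $K$. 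Your symmetrization/shadow-system plan does not pass through these moments at all.

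Your proposal is candid about its gaps, and the central one is real rather than technical. The Meyer--Pajor inclusion \eqref{steiner-inclusion} is tailored to the polar at a \emph{single} point (the origin, or after translation the Santal\'o point); to run Step~1 for $M_r$ you need a pointwise or at least integrated comparison of $x\mapsto\vol\big((K-x)^*\big)$ over the whole homothet $\lambda K$ against the corresponding object for $S_HK$. Even for symmetric $K$ with $s_r(K)=0$, matching an interior point $x\in\lambda K$ with some $x'\in\lambda S_HK$ so that $\vol\big((K-x)^*\big)\le\vol\big((S_HK-x')^*\big)$ is not supplied by \eqref{steiner-inclusion}, and counterexamples to naive pointwise versions are easy. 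The shadow-system variant you outline faces the same missing ingredient: a Busemann-type convexity for the \emph{integrated} polar volume, which is precisely the open content of the conjecture. In short, what you have is a reasonable research programme for attacking the full conjecture, whereas the paper only reports the known unconditional case via the moment expansion; your approach, if it could be made to work, would buy the general case, but the key inequality at fixed $r$ remains unproved.
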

The limiting cases of Conjecture 
\ref{conF} are the Blaschke-Santal\'o inequality as $r\to 0$ and the centro-affine isoperimetric inequality as $r\to \infty$. Faifman \cite{Fa} was able to show that Conjecture 
\ref{conF} holds for unconditional bodies $K$. The idea of the proof  includes the generalization of the conjecture of K. Ball (see inequality (\ref{conjB})), namely
\begin{equation}
\int_K\int_{K^*}\langle x,y\rangle^{2j}dxdy \le  \int_{B_2^n}\int_{B_2^n}\langle x,y\rangle^{2j}dxdy,
\end{equation}
for all $j\in {\mathbb N}$, which  Faifman was able to confirm  for $K$ unconditional. 
 A lower bound for the quantity $M_r(K)$ was proposed in \cite{FaVW}: 
\begin{conj}\label{conFaVW} For  $r>0$,
$M_r(K)$
is minimized by simplices in general and by Hanner polytopes for symmetric bodies $K$.
\end{conj}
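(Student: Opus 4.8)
The plan is first to turn $M_r(K)$ into a positive combination of affine invariants of $K$, which reduces Conjecture~\ref{conFaVW} to a family of ``higher moment'' versions of Mahler's conjecture, and then to establish these in the settings where the volume-product lower bounds (Conjectures~\ref{mahler} and~\ref{conjcube}) are already theorems. For the reformulation, recall $B_K^F(z,r)=(1-e^{-r})(K-z)+z$, $\vol_K^F(A)=\frac1{v_n}\int_A\vol(K^x)\,dx$ and $\vol(K^z)=\int_{K^*}(1-\langle z,y\rangle)^{-(n+1)}\,dy$, and set $\lambda=1-e^{-r}\in(0,1)$. In the symmetric case $s_r(K)=0$, so after the substitution $x=\lambda w$,
\[
M_r(K)=\int_{\lambda K}\vol(K^x)\,dx=\lambda^n\int_K\vol(K^{\lambda w})\,dw=\lambda^n\sum_{j\ge0}\binom{n+2j}{2j}\lambda^{2j}\,c_{n,j}(K),\quad c_{n,j}(K):=\int_K\int_{K^*}\langle w,y\rangle^{2j}\,dw\,dy,
\]
the odd moments dropping out by symmetry. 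Each $c_{n,j}$ is $GL(n)$-invariant (since $\langle Aw,A^{-T}y\rangle=\langle w,y\rangle$ and the two Jacobians cancel) and $c_{n,0}(K)=\VP(K)$; the non-symmetric case is handled by the same expansion with polarity taken at $s_r(K)$. Hence it suffices to show, term by term, that $c_{n,j}$ is minimized for every $j\ge0$ by a simplex (resp.\ by a Hanner polytope in the symmetric case): the $j=0$ term is exactly Mahler's conjecture, so Conjecture~\ref{conFaVW} is at least as hard as Conjecture~\ref{mahler}, and already the $j=1$ term is a ``reverse'' companion of Ball's inequality~\eqref{conjB}, itself open. A full proof is thus out of reach with present tools, and the realistic target is the list of settings below.

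In dimension $2$ I would try to prove the Funk-geometric analogue of Theorem~\ref{shadow} --- that $t\mapsto M_r(K_t)^{-1}$ is convex along any shadow system $(K_t)$ of planar convex bodies --- and then run the vertex-removal induction of the planar case of Mahler verbatim, followed by an approximating sequence of Steiner-type moves; since $M_r$ is a positive mixture of the polar moments $c_{n,j}$, the results of Campi--Gronchi and Meyer--Reisner on polar moments along shadow systems, and their extension to non-Lebesgue measures in \cite{CFPP} ($M_r(K)$ being essentially a moment of the measure $\vol(K^x)\,dx$), are the place to look for this convexity. For unconditional $K$ one can decompose $\R^n$ and $K$ into coordinate octants as in the unconditional proof reproduced in Section~\ref{MC}, treating each $c_{n,j}(K)$ by the Pr\'ekopa--Leindler / Ball-body technology behind the functional Mahler results \cite{FM2,FM3}; this parallels Faifman's proof of the companion upper bound Conjecture~\ref{conF} for unconditional bodies. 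For symmetric $K\subset\R^3$ I would follow the route of Iriyeh--Shibata and \cite{FHMRZ}: replace the volume equipartition of Theorem~\ref{thm:equipart} by an equipartition with respect to the measure $\vol(K^x)\,dx$ (the equipartition machinery of \cite{Kl1} applies to arbitrary absolutely continuous measures) and then repeat the oriented-hypersurface / Stokes computation with cone volumes weighted by this measure. Zonoids should be accessible through the Reisner / Gordon--Meyer--Reisner slicing induction, once one checks that the identity $(P_{u^\perp}Z)^*=Z^*\cap u^\perp$ is compatible with the $r$-dependent domain $B_Z^F(0,r)$.

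The main obstacle, beyond the fact that the general statement contains the open Conjecture~\ref{mahler}, is establishing the shadow-system convexity of $M_r$ (equivalently, of the moments $c_{n,j}$ along the relevant shadow systems): unlike the volume product, $M_r(K)$ is an integral over a body that itself moves with $K$, so the ``polar of a projection is a section'' mechanism underlying Theorem~\ref{shadow} does not apply directly, and one likely needs the $(n+1)$-dimensional lift of the shadow system together with a fibered form of Busemann's theorem. A secondary point to settle is whether \emph{all} Hanner polytopes yield the same value of $M_r$ --- automatic for $c_{n,0}$, but requiring a recursion on $\ell_1$- and $\ell_\infty$-sums for the higher moments; should this fail, the symmetric half of Conjecture~\ref{conFaVW} would have to be reformulated with the cube (and its Hanner orbit) as extremizer.
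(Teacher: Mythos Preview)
This statement is a \emph{conjecture}, and the paper does not offer a proof of it; it only reports that the unconditional case is established in \cite{FaVW}, via the moment inequality~\eqref{eq:lowerH}
\[
\int_H\int_{H^*}\langle x,y\rangle^{2j}\,dx\,dy\ \le\ \int_K\int_{K^*}\langle x,y\rangle^{2j}\,dx\,dy,
\]
whose proof in turn rests on the functional inverse Santal\'o inequality of \cite{FM2}. Your series expansion of $M_r(K)$ in the moments $c_{n,j}(K)$ is correct, and your reduction ``minimize each $c_{n,j}$ by a Hanner polytope'' is exactly inequality~\eqref{eq:lowerH}; so for the unconditional symmetric case your plan coincides with what the paper describes, including the pointer to \cite{FM2,FM3} as the underlying tool. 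You are also right that the $j=0$ term is Mahler's conjecture, so the general statement is genuinely out of reach.

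Where you go beyond the paper is in sketching programs for dimension $2$, symmetric bodies in $\R^3$, and zonoids. These are reasonable analogies, but be aware that none of them is claimed in the paper or in \cite{FaVW}, and each hides a real difficulty you yourself flag: a shadow-system convexity for $t\mapsto M_r(K_t)^{-1}$ (or for each $c_{n,j}(K_t)^{-1}$) is not in the literature and does not follow from Theorem~\ref{shadow}, since the integrand $\vol(K_t^x)$ and the domain $\lambda K_t$ both vary with $t$; and the equipartition route of \cite{FHMRZ} used the product structure $\vol(K)\vol(K^*)$ in an essential way, whereas $c_{n,j}$ couples $K$ and $K^*$ through $\langle x,y\rangle^{2j}$, so the Stokes/cone argument would need a genuinely new identity. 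Your final caveat about whether all Hanner polytopes share the same value of each $c_{n,j}$ is well taken and not addressed in the paper either; it is implicit in the way~\eqref{eq:lowerH} is stated, but would need to be checked (e.g.\ by a recursion under $\ell_1/\ell_\infty$ sums) before the term-by-term strategy can conclude.
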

The limiting cases as $r\to 0$  of Conjecture \ref{conFaVW} for symmetric $K$  is Conjecture 2 and  
as $r\to +\infty$ is a conjecture of Kalai \cite{K} on the minimization of the flag number of $K$. Conjecture \ref{conFaVW} is proved in \cite{FaVW} for unconditional bodies and follows from an  interesting new inequality discovered in \cite{FaVW}  and proved for unconditional bodies
\begin{equation}\label{eq:lowerH}
\int_H\int_{H^*}\langle x,y\rangle^{2j}dxdy \le \int_K\int_{K^*}\langle x,y\rangle^{2j}dxdy, 
\end{equation}
where $H$ is a Hanner polytope in $\R^n$ and $j\in {\mathbb N}.$ The proof of  (\ref{eq:lowerH}) in \cite{FaVW} is based on the functional inverse Santal\'o inequality  \cite{FM2}.

\subsection{Geometry of numbers and isosystolic inequalities} The volume product is a standard tool in the geometry of numbers. The connection goes back to the theorem of Mahler \cite{Ma2} (see \cite{BM},  \cite{Gru}, Chapter 3 or \cite{Ev}) on the bound of the successive minima of a convex body and its dual.

Let us here present yet another connection of volume product with the  geometry of numbers and the systolic geometry  discovered by \'Alvarez Paiva, Balacheff and  Tzanev  \cite{APBT}.  

Minkowski’s first theorem in the geometry of numbers states that if $K$ is a symmetric convex body in $\R^n$ with $\vol(K) \ge 2^n$, then $K$ contains at least one non-zero integer point (in $\Z^n$). The symmetry assumption  is needed, as there are convex bodies $K$ of large volume containing the origin and no other integer point.  We know that such bodies must be  "flat" \cite{KL} and  \'Alvarez Paiva, Balacheff \cite{APB}  conjectured that the volume of their polars $K^*$ is not too small:
\begin{conj}\label{APconj} Let $K\subset \R^n$ be a convex body such that $\inte(K)\cap\Z^n=\{0\}$. Then  $\vol(K^*) \ge (n + 1)/n!$, with equality  if and only if 
$K$ is a simplex  with vertices in $\Z^n$ and no other integer  points than its vertices and $0$.\end{conj}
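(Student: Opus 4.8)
The plan is to pass to the polar formulation, isolate a ``soft'' case that is settled by a one--line lattice computation, and then confront the genuinely hard remaining case by induction on the dimension.

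\textbf{Reformulation.} Put $L=K^*$; by the bipolar theorem ($K=L^*$, since $0\in\inte K$) the assertion $\vol(K^*)\ge(n+1)/n!$ becomes: \emph{if $L$ is a convex body with $h_L(v)=\|v\|_{L^*}\ge 1$ for every $v\in\Z^n\setminus\{0\}$ (equivalently $\inte(L^*)\cap\Z^n=\{0\}$), then $\vol(L)\ge (n+1)/n!$.} The conjectured extremal $L$ is the lattice simplex $T=\conv(e_1,\dots,e_n,-(e_1+\dots+e_n))$: one checks $h_T(v)=\max\bigl(\max_i v_i,-\textstyle\sum_i v_i\bigr)\ge 1$ for $v\ne0$ (if some $v_i\ge1$ this is clear; otherwise all $v_i\le0$ and some $v_j\le-1$, so $-\sum_iv_i\ge1$), and $\vol(T)=\tfrac1{n!}|\det(I+J)|=\tfrac{n+1}{n!}$ with $J$ the all--ones matrix, while $K=T^*$ is the extremal simplex in the original formulation.

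\textbf{The soft case.} Note first the elementary fact that any lattice simplex $\conv(w_0,\dots,w_n)$ with $0$ in its interior has volume $\ge(n+1)/n!$: writing $0=\sum_ic_iw_i$ with $c_i=m_i/m$, where $m_i=n!\,\vol(\conv(0,w_0,\dots,\widehat{w_i},\dots,w_n))\in\Z_{\ge1}$ and $m=\sum_im_i$, one gets $\vol(\conv(w_0,\dots,w_n))=\tfrac1{n!}\sum_im_i\ge\tfrac{n+1}{n!}$, with equality iff all $m_i=1$. Hence $\vol(L)\ge(n+1)/n!$ as soon as $L=K^*$ contains $n+1$ lattice points surrounding the origin; this includes the reflexive case (where $L$ is a lattice polytope with $\inte(L)\cap\Z^n=\{0\}$), Carathéodory applied to the positive hull of the vertices of $L$ producing the required Fano simplex inside $L$, and the equality analysis then matching it against $T$.

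\textbf{The hard case and the obstacle.} What remains is the situation where $L=K^*$ is ``lattice--point--poor'' --- it need not contain any Fano lattice simplex --- and this is where the problem is open. I would first reduce to polytopes: since $\vol^{1/n}$ is concave in the support function (Brunn--Minkowski) and the constraint set $\{L:h_L(v)\ge1\ \forall v\ne0\}$ is convex, a minimizer is an extreme point of this set, hence a polytope each of whose facets carries an active lattice constraint $h_L(v)=1$; one then hopes to force it to be a (lattice) simplex. For the quantitative bound I would induct on $n$: choose a primitive $v\in\Z^n$ of minimal norm (so $\Z^n\cap v^\perp$ has covolume $1$, e.g. $v=e_i$) and project $L$ along $v$ onto $v^\perp\cong\R^{n-1}$; support values in the directions of $v^\perp$ are unchanged, so the projection satisfies the $(n-1)$--dimensional hypothesis and by induction has $(n-1)$--volume $\ge n/(n-1)!$, while $h_L(\pm v)=\|\pm v\|_K\ge1$ controls the extent of $L$ in the $v$ direction. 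The missing ingredient --- and, I expect, the main obstacle --- is a lower bound on the \emph{average fibre length} of $L$ over this projection by $(n+1)/n!$ divided by the area of the projection: a flat admissible body can have its fibres collapse in any single chosen direction, so one must exploit the constraints $h_L(w)\ge1$ in many lattice directions at once and pick $v$ adapted to the shape of $L$. The Reeve phenomenon (empty lattice simplices of unbounded volume already in dimension $3$) shows that no purely width--based estimate can succeed, and this combinatorial richness in dimension $\ge3$ is exactly what makes the conjecture resist both this approach and the vertex--removal / shadow--system approach that worked in the planar Mahler case.
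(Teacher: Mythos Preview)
This statement is presented in the paper as an open \emph{conjecture}, not a theorem. The paper does not prove it; it only records that \'Alvarez Paiva, Balacheff and Tzanev established the case $n=2$ and an isomorphic bound $\vol(K^*)\ge c^n(n+1)/n!$ in all dimensions (the latter via the Bourgain--Milman inequality). There is therefore no ``paper's own proof'' to compare against, and the sharp inequality remains open for $n\ge3$.

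Your write-up is an honest reconnaissance rather than a proof, and you say so yourself: the ``hard case'' paragraph explicitly flags the missing ingredient (a lower bound on average fibre length over the lattice projection) and explains why naive width-based approaches fail. That diagnosis is accurate, but it means the proposal does not establish the conjecture. On the parts you do carry out: the polar reformulation and the identification of the extremal $T=\conv(e_1,\dots,e_n,-\sum e_i)$ are correct, and the ``soft case'' observation (any lattice simplex with $0$ in its interior has volume at least $(n+1)/n!$, via the integrality of the barycentric numerators $m_i$) is correct and classical. The extremality reduction to polytopes is plausible but not watertight as stated: minimizing the concave functional $\vol^{1/n}$ over the convex constraint set $\{L:h_L(v)\ge1\ \forall v\in\Z^n\setminus\{0\}\}$ only forces the minimum to an extreme point once you have compactness, so you must first argue that minimizers are a priori bounded, and then justify that extreme points of the truncated set are polytopes with each facet activating a lattice constraint. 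None of this is the real obstacle, though --- as you correctly identify, the genuine difficulty is the fibre-length estimate in the inductive step, and that gap is exactly why the conjecture is still open.
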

In \cite{APBT},  Conjecture \ref{APconj} was proved in $\R^2$ and an isomorphic bound  for $\vol(K^*)$ was given in all dimensions. Namely, for some absolute constant $c>0$, one has $\vol(K^*) \ge c^n(n + 1)/n!$ for any convex body  $K$ in  $\R^n$ such that $\inte(K)$ contains no integer point other than the origin. The proof of this fact in \cite{APBT} uses Bourgain-Milman inequality, and it is shown that this isomorphic version of Conjecture \ref{APconj} is actually equivalent to it.

Conjecture \ref{APconj} can be further generalized to a conjecture in systolic geometry.  We refer to \cite{APBT} for exact statements and definitions. We  mention here  a version of the conjecture in the language of Finsler geometry (see Section \ref{Funk}).  The Holmes-Thompson volume of a  Finsler manifold $(M,F)$ is defined as
$$
\vol_{HT}(M,F)=\frac{1}{v_n}\int_M \vol((B_xM)^*) dx.
$$
\begin{conj}\label{syst} For any Finsler metric $F$ on $\RP^n$, there exists a closed non-contractible   geodesic with length bounded by $\frac{(n!v_n)^{1/n}}{2}\vol_{HT}(\RP^n, F)^{1/n}$.
\end{conj}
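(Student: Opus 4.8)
The plan is to reduce Conjecture~\ref{syst} to a sharp bound for the volume product by passing through the theory of projective Finsler metrics. First I would argue that, for the isosystolic inequality, it suffices to treat \emph{projective} metrics on $\RP^n$, i.e. those all of whose geodesics are projective lines. Given an arbitrary Finsler $F$, an integral–geometric (Crofton-type) averaging over the family of closed geodesics of $F$ should produce a projective metric $\tilde F$ whose Holmes–Thompson volume does not exceed $\vol_{HT}(\RP^n,F)$ while its systole is not smaller, so that the extremal ratio $\operatorname{sys}(\RP^n,F)^n/\vol_{HT}(\RP^n,F)$ is attained on projective metrics. This is the step I expect to be the main obstacle: making such an averaging construction simultaneously volume–non-increasing and systole–non-decreasing, in the smooth non-reversible generality of the statement, is exactly the kind of delicate ``filling/optimality'' argument that typically yields only an isomorphic (constant $c^n$) conclusion unconditionally.

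Second, I would invoke the solution of Hilbert's fourth problem in the smooth projective setting (Pogorelov, Ambartzumian, Busemann, \'Alvarez Paiva): a projective Finsler metric on $\RP^n$ is described by a Crofton formula with respect to a nonnegative measure $\mu$ on the dual space of hyperplanes, the length of a geodesic segment being the $\mu$-measure of the hyperplanes meeting it, and $\vol_{HT}(\RP^n,F)$ being an explicit homogeneous functional of $\mu$. Closed non-contractible geodesics are then precisely the projective lines, so the systole is the minimum over lines $\ell$ of the $\mu$-mass of the hyperplanes hitting $\ell$. Packaging $\mu$ into the symmetric convex body (a zonoid) $Z_\mu$ whose support function encodes these Crofton weights, and using the duality $\vol_K^F(B^F_K(0,r))=\vol_{K^*}^F(B^F_{K^*}(0,r))$ recorded above, the inequality of Conjecture~\ref{syst} turns into a volume-product-type statement for $Z_\mu$ and $Z_\mu^*$, with the round $\RP^n$ corresponding to $Z_\mu=B_2^n$ (hence the equality case). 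In the generic smooth regime this has the shape of a Blaschke–Santal\'o bound, while degenerating $\mu$ to measures concentrated along a lattice-adapted family of hyperplanes recovers Conjecture~\ref{APconj} and the lower-bound (Mahler) regime; the conjectured extremal metrics on $\RP^n$ should then match the conjectured Mahler extremizers, simplices and Hanner polytopes, as in Conjectures~\ref{mahler} and~\ref{conjcube}.

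Finally, I would extract the conclusions that are actually within reach. Unconditionally, feeding the Bourgain–Milman inequality~\eqref{BourgMil} through the above dictionary yields the isomorphic version of Conjecture~\ref{syst} (a geodesic of length $\le C\,(n!v_n)^{1/n}\vol_{HT}^{1/n}$ for some absolute $C$), which is essentially the route of \cite{APBT} and is in fact equivalent to Bourgain–Milman. The sharp constant $\frac{(n!v_n)^{1/n}}{2}$ then follows in every dimension and for every class where the corresponding Mahler/volume-product bound is known, namely $n\le 2$, zonoids and their polars, unconditional bodies, and symmetric bodies in dimension $3$. The general sharp statement, however, remains open for the same reason Mahler's conjecture does: after the reduction it subsumes the lower bound for $\VP(K)$ in arbitrary dimension, so the genuine difficulty is concentrated in the projectivity reduction of the first step together with the still-unresolved sharp case of the volume product.
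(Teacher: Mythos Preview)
The statement you were asked to prove is labelled a \emph{conjecture} in the paper, and indeed the paper contains no proof of it: there is nothing to compare your attempt against. What the paper does prove is Theorem~\ref{systM}, the implication Conjecture~\ref{syst} $\Rightarrow$ Conjecture~\ref{conjcube}. That argument specializes the Finsler metric to the one $d_K$ on $\RP^n$ obtained from a symmetric convex body $K$ by identifying antipodal boundary points, computes $\vol_{HT}(\RP^n,d_K)=v_n^{-1}\VP(K)$ and $\sys(\RP^n,d_K)=2$, and reads off the Mahler lower bound from the conjectured isosystolic inequality.

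Your proposal runs in the opposite direction: you outline a program to \emph{derive} Conjecture~\ref{syst} by (i) reducing an arbitrary Finsler metric to a projective one without worsening the ratio $\sys^n/\vol_{HT}$, and then (ii) translating the projective case into a volume-product inequality via the Crofton description of Hilbert's fourth problem. You are right to flag step (i) as the genuine obstacle: no such sharp reduction is known, and without it the dictionary in (ii) only covers the metrics $d_K$ already coming from convex bodies, which is exactly the content of Theorem~\ref{systM} rather than the full conjecture. Your final assessment---that the sharp statement is open, that unconditionally only the isomorphic version via Bourgain--Milman (as in \cite{APBT}) is available, and that the sharp cases follow precisely where the corresponding Mahler-type bound is known---is correct and matches the paper. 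In short, your write-up is an honest diagnosis of why Conjecture~\ref{syst} is hard, not a proof of it, and that is the appropriate conclusion here.
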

 We remind that a set which can be reduced to one of its points  by a continuous deformation, is said to be contractible.  For $n=2$, Conjecture \ref{syst}  follows from the works of  Ivanov \cite{Iv1, Iv2}.  The next theorem was proved in \cite{APBT}:
\begin{thm}\label{systM}
Conjecture \ref{syst} implies Conjecture  2 for centrally symmetric bodies.
\end{thm}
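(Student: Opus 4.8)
The plan is to encode the centrally symmetric body $K\subset\R^n$ into a Finsler metric $F_K$ on $\RP^n$ whose Holmes-Thompson volume records $\VP(K)$ and whose non-contractible loops cannot be too short. Concretely, I would construct $F_K$ so that, after a suitable normalization,
$$\vol_{HT}(\RP^n,F_K)=\frac{1}{v_n}\vol(K)\vol(K^*),\qquad \sys(\RP^n,F_K)\ge 2,$$
where $\sys$ denotes the infimum of $F_K$-lengths of non-contractible loops. Granting this, one applies Conjecture~\ref{syst} to $(\RP^n,F_K)$: it yields a closed non-contractible geodesic $\gamma$ with length $L(\gamma)\le \tfrac{(n!v_n)^{1/n}}{2}\,\vol_{HT}(\RP^n,F_K)^{1/n}$. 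Since $\gamma$ is non-contractible, $L(\gamma)\ge\sys(\RP^n,F_K)\ge 2$, so
$$2\le \frac{(n!v_n)^{1/n}}{2}\left(\frac{\vol(K)\vol(K^*)}{v_n}\right)^{1/n}=\frac{(n!)^{1/n}}{2}\bigl(\vol(K)\vol(K^*)\bigr)^{1/n},$$
which rearranges to $\vol(K)\vol(K^*)\ge 4^n/n!$. As $K$ is centrally symmetric, $\VP(K)=\vol(K)\vol(K^*)$ and $4^n/n!=\VP(B_\infty^n)$, so this is precisely Conjecture~\ref{conjcube}.

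For the construction of $F_K$ I would use the classical correspondence between projectively flat Finsler metrics on $\RP^n$ and centrally symmetric data in $\R^{n+1}$: take a suitable centrally symmetric body $C\subset\R^{n+1}$ assembled from $K$ and a segment as an $\ell_1$- or $\ell_\infty$-sum (so that passing to the dual interchanges the two sums and carries $K$ to $K^*$), realize $\partial C$ as an $n$-sphere, equip it with its Holmes-Thompson hypersurface metric, and let $F_K$ be the induced metric on $\RP^n=\partial C/\{\pm1\}$. Its geodesics are then the projective lines of $\RP^n$. A projective line is an embedded circle, and a non-contractible loop must wrap it an odd number of times, so once the scaling is fixed the shortest non-contractible geodesic corresponds to a ``diameter direction'' of $K$ and has $F_K$-length exactly $2$; that no non-contractible loop is shorter is a first-variation and convexity statement, established from support-function inequalities of the same kind as the Funk geometry computations in Section~\ref{Funk}. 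For $n=2$ this point is unconditional by the work of Ivanov recalled above.

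The volume identity is integral-geometric. Since $\vol_{HT}(\RP^n,F_K)=v_n^{-1}$ times the Liouville volume of the unit co-disc bundle $D^*_{F_K}\RP^n\subset T^*\RP^n$, it suffices to show that, $F_K$ being projective, $D^*_{F_K}\RP^n$ is symplectomorphic to a bundle over an $n$-dimensional base of volume $\vol(K)$ with fibres affinely equivalent to $K^*$. This is the exact analogue of the flat-torus computation, where the co-disc bundle of the norm $\|\cdot\|_K$ on $T^n$ is $T^n\times K^*$, of Liouville volume $\vol(K^*)$; the additional factor $\vol(K)$ on $\RP^n$ reflects the non-triviality of the base. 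Equivalently one may invoke the Crofton formula for projective Finsler metrics, $\vol_{HT}$ being a universal constant times the total mass of the associated Crofton measure on hyperplanes, and arrange that measure to have mass proportional to $\vol(K)\vol(K^*)$; matching the universal constants then pins down the normalization used above.

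The main obstacle is to perform the construction with \emph{mutually consistent} normalizations: one must exhibit the correct body $C\subset\R^{n+1}$ and the correct scaling so that $D^*_{F_K}\RP^n$ has Liouville volume exactly $\vol(K)\vol(K^*)$ \emph{and} the shortest non-contractible geodesic of $F_K$ has length exactly $2$, and then prove the lower bound on the length of \emph{every} non-contractible loop, not merely the ``straight'' ones. This needs the projective rigidity of $F_K$ (its geodesics are precisely the projective lines) together with a convexity argument excluding shorter non-contractible competitors; in the plane this is exactly Ivanov's input, and the higher-dimensional argument should be modeled on it.
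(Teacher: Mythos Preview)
Your overall strategy is correct and matches the paper: construct a Finsler structure $F_K$ on $\RP^n$ with $\vol_{HT}(\RP^n,F_K)=\frac{1}{v_n}\VP(K)$ and systole at least $2$, then apply Conjecture~\ref{syst}. The arithmetic in your first two paragraphs is exactly right.

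Where you diverge from the paper is in the construction, and this is also where the genuine gap lies. You propose to pass to $\R^{n+1}$, build a body $C=K\oplus_1[-1,1]$ or $K\oplus_\infty[-1,1]$, put some ``Holmes--Thompson hypersurface metric'' on $\partial C$, and then argue via symplectomorphisms of co-disc bundles or Crofton measures. None of these steps is carried out: the hypersurface metric is not defined, the symplectomorphism is not exhibited, the Crofton mass is not computed, and the systole bound is deferred to ``projective rigidity'' plus an unspecified convexity argument. You yourself identify the consistent normalization as ``the main obstacle''---but that obstacle \emph{is} the proof.

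The paper's construction (following \cite{APBT}) avoids all of this by staying in dimension $n$. Equip $K$ itself with the \emph{flat} Finsler metric that at every interior point coincides with $\|\cdot\|_K$, and identify antipodal points of $\partial K$; since a closed $n$-ball with antipodal boundary identification is $\RP^n$, this yields a length space $(\RP^n,d_K)$. The two key facts are then immediate. For the volume: at every $x$ the tangent unit ball is $K$, so its polar is $K^*$, and
\[
\vol_{HT}(\RP^n,d_K)=\frac{1}{v_n}\int_K\vol(K^*)\,dx=\frac{1}{v_n}\vol(K)\vol(K^*)=\frac{1}{v_n}\VP(K).
\]
For the systole: a non-contractible loop must run from some $x\in\partial K$ to $-x$ through $K$; by the triangle inequality in $\|\cdot\|_K$ any such path has length at least $\|x-(-x)\|_K=2\|x\|_K=2$, and the straight diameter realizes this. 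No lift to $\R^{n+1}$, no projective rigidity, and no normalization puzzle are needed.
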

The proof of Theorem \ref{systM} uses the Finsler  metric on  a convex symmetric body $K$ which coincides at each point with the norm  corresponding to $K$. By identifying  the points $x$ and $-x$ in $\partial K$, we obtain a length space (a space in which the intrinsic metric coincides with the original metric) on $\RP^n$. We denote this Finsler space by $(\RP^n,d_K)$. It turns out that one has
$$
\vol(\RP^n,d_K)=\frac{1}{v_n}\VP(K)
$$
and that the length of the systoles (the shortest noncontractible   geodesics) in $(\RP^n,d_K)$ is equal to $2$. Combining those commutations and assuming that Conjecture \ref{syst} holds, we get   from  Conjecture \ref{syst} a proof of Conjecture 2 for symmetric convex bodies in $\R^n$.

\vspace{0.8cm}

 \noindent Matthieu Fradelizi \\
Univ Gustave Eiffel, Univ Paris Est Creteil, CNRS, LAMA UMR8050 F-77447 Marne-la-Vall\'ee, France\\
E-mail address: matthieu.fradelizi@univ-eiffel.fr

\vspace{0.8cm}

\noindent Mathieu Meyer \\
Univ Gustave Eiffel, Univ Paris Est Creteil, CNRS, LAMA UMR8050 F-77447 Marne-la-Vall\'ee, France\\
E-mail address:  mathieu.meyer@univ-eiffel.fr

\vspace{0.8cm}

\noindent Artem Zvavitch \\
Department of Mathematical Sciences \\
Kent State University \\
Kent, OH 44242, USA \\
E-mail address: zvavitch@math.kent.edu

\end{document}